\documentclass[12pt]{amsart}
\usepackage{amsmath,amsthm,amsfonts,amssymb,eucal}


\renewcommand {\a}{ \alpha }
\renewcommand{\b}{\beta}

\newcommand{\g}{\gamma}

\renewcommand{\d}{\delta}
\newcommand{\s}{\sigma}
\renewcommand{\l}{\lambda}
\renewcommand{\L}{\Lambda}
\newcommand{\z}{\zeta}
\renewcommand{\t}{\theta}

\newcommand{\p}{\partial}
\newcommand{\om}{\omega}
\newcommand{\Om}{\Omega}

\newcommand{\oq}{\ {\raise 7pt\hbox{${\scriptstyle\circ}$}}
\kern -7pt{
\hbox{$Q$}}}

\newcommand{\R}{ \mathbb R}

\newcommand{\Rd}{ \mathbb R^d}

\newcommand {\GS}{\mathfrak S}

\newcommand {\bx}{\mathbf x}

\newcommand {\bz}{\mathbf z}
\newcommand {\by}{\mathbf y}

\newcommand{\SN}{{\sf{N}}}

\newcommand {\bmu}{\boldsymbol\mu}

\newcommand {\boldeta}{\boldsymbol\eta}

\newcommand {\bxi}{\boldsymbol\xi}

\newcommand{\lu}{\langle}
\newcommand{\ru}{\rangle}


\newcommand{\CB}{\mathcal B}

\newcommand{\CH}{\mathcal H}


\newcommand{\plainC}[1]{\textup{{\textsf{C}}}^{#1}}

\newcommand{\plainL}[1]{\textup{{\textsf{L}}}^{#1}}

\DeclareMathOperator{\tr}{{tr}}

\newcommand{\1}
{{\,\vrule depth3pt height9pt}{\vrule depth3pt height9pt}
{\vrule depth3pt height9pt}{\vrule depth3pt height9pt}\,}

\DeclareMathOperator {\im }{{Im}}

\DeclareMathOperator {\dist} {{dist}}

\DeclareMathOperator{\op}{{Op}}




\hfuzz1pc 
\vfuzz1pc

\newtheorem{thm}{Theorem}[section]
\newtheorem{cor}[thm]{Corollary}
\newtheorem{lem}[thm]{Lemma}
\newtheorem{prop}[thm]{Proposition}
\newtheorem{cond}[thm]{Condition}

\theoremstyle{definition}

\newtheorem*{remark}{Remark}
\newtheorem{rem}[thm]{Remark}

\numberwithin{equation}{section}

%
%

\newcommand{\bee}{\begin{equation}}
\newcommand{\ene}{\end{equation}}
\newcommand{\bees}{\begin{equation*}}
\newcommand{\enes}{\end{equation*}}
\newcommand{\bes}{\begin{split}}
\newcommand{\ens}{\end{split}}

\newcommand{\bet}{\begin{thm}}
\newcommand{\ent}{\end{thm}}
\newcommand{\bel}{\begin{lem}}
\newcommand{\enl}{\end{lem}}
\newcommand{\bec}{\begin{cor}}
\newcommand{\enc}{\end{cor}}
\newcommand{\bep}{\begin{proof}}
\newcommand{\enp}{\end{proof}}
\newcommand{\ber}{\begin{rem}}
\newcommand{\enr}{\end{rem}}

\usepackage{color,dsfont}
\setlength{\textwidth}{450pt }


\begin{document}
\hoffset -4pc

\title
[Trace formulas for Wiener--Hopf operators]
{{Trace formulas for Wiener--Hopf 
operators with applications to entropies 
of free fermionic equilibrium states}}
\author[H.~Leschke, A.V.~Sobolev, W.~Spitzer]
{Hajo Leschke, Alexander V.~Sobolev, Wolfgang Spitzer}
\address{Institut f\"ur Theoretische Physik, 
Universit\"at Erlangen-N\"urnberg, 
Staudtstra\ss e 7, 91058 Erlangen, Germany}
\email{hajo.leschke@physik.uni-erlangen.de}
\address{Department of Mathematics\\ University College London\\
Gower Street\\ London\\ WC1E 6BT UK}
\email{a.sobolev@ucl.ac.uk}
\address{Fakult\"at f\"ur Mathematik und Informatik, 
FernUniversit\"at Hagen, Universit\"atsstra\ss e 1, 
58097 Hagen, Germany}
\email{wolfgang.spitzer@fernuni-hagen.de}
\keywords{Non-smooth functions of 
Wiener--Hopf operators, asymptotic 
trace formulas, entanglement entropy}
\subjclass[2010]{Primary 47G30, 35S05; Secondary 45M05, 47B10, 47B35}

\begin{abstract}
We consider 
non-smooth functions of (truncated) 
Wiener--Hopf type operators on the Hilbert space $\plainL2(\mathbb R^d)$. 
Our main results are uniform estimates for trace norms ($d\ge 1$) 
and quasiclassical asymptotic formulas for traces of the 
resulting operators ($d=1$). 
Here, we follow Harold Widom's seminal ideas, who proved 
such formulas for smooth functions decades 
ago. The extension to non-smooth functions 
and the uniformity of the estimates in various (physical) parameters
rest on recent advances by one of the authors (AVS). 
We use our results 
to obtain the large-scale behaviour of the local entropy and the spatially 
bipartite entanglement entropy (EE) of thermal equilibrium states of non-interacting 
fermions in position space $\mathbb R^d$ ($d\ge 1$) at positive temperature, $T>0$.  
In particular, our definition of the thermal EE 
 leads to estimates that are simultaneously sharp for small $T$
and large scaling parameter $\alpha>0$ provided that the
product $T\alpha$ remains bounded from below. Here $\alpha$
is the reciprocal quasiclassical parameter.
For $d=1$ we obtain for the  thermal EE an asymptotic formula which is 
consistent with the large-scale behaviour of the ground-state EE (at $T=0$),  
previously established by the authors for $d\ge 1$.
\end{abstract}

\date{May 14, 2016}

\maketitle

\tableofcontents

\section{Introduction} \label{intro}  

The present paper is devoted to the study of (bounded, self-adjoint) operators 
of the form
\begin{align}\label{WH:eq}
W_\a := W_\a(a; \L) := \chi_\L \op_\a(a) \chi_\L,\ \a >0,
\end{align}
on $\plainL2(\R^d)$, $d\ge 1$, 
where $\chi_\L$ is the indicator function of a set $\L\subset\R^d$.
The parameter $1/\a$ can be interpreted as a quasiclassical parameter
that tends to zero in our asymptotic results. 
The notation $\op_\a(a)$ stands for the $\a$-pseudo-differential 
operator with symbol $a=a(\bxi)$, which acts on Schwartz functions $u$ on $\R^d$ as
\begin{equation*}
\bigl(\op_\a(a) u\bigr)(\bx) := \frac{\a^{d}}{(2\pi)^{\frac{d}{2}}}
\iint e^{i\a\bxi\cdot(\bx-\by)} a(\bxi) u(\by) d\by d\bxi\,,\quad \bx\in\R^d.
\end{equation*} 
Integrals without indication of the 
integration domain always mean integration 
over $\R^d$ with the value of $d$ which is clear from the context.
More general symbols, depending on both variables 
$\bx$ and $\bxi$, or operators 
with matrix-valued symbols can be also treated, 
but we limit our attention only to $\bxi$-dependent symbols. 
We call the operator \eqref{WH:eq} a Wiener--Hopf operator. 
A more precise term would be \textit{truncated} 
Wiener--Hopf operator, but we always omit ``truncated" for brevity.  
Our focus is on the operator difference 
\begin{equation}\label{Dalpha:eq}
D_\a(a, \L; f) := \chi_\L f(W_\a(a; \L)) \chi_\L 
- W_\a(f\circ a; \L),
\end{equation}
with some suitably chosen functions $f$. We are interested in the 
asymptotic properties of the trace 
$\tr D_\a(a, \L; f)$ as $\a\to\infty$. 
If $f(0) = 0$, $\L$ is bounded and $a$ decays sufficiently fast 
at infinity, then it is trivial to observe that 
the second operator on the right-hand side of \eqref{Dalpha:eq} 
is trace-class and 
\begin{equation}\label{weyl:eq}
\tr W_\a(f\circ a; \L) = \frac{\a^d}{(2\pi)^d} |\L| \int f\bigl(a(\bxi)\bigr)d\bxi,
\end{equation} 
where $|\Lambda|$ is 
the $d$-dimensional Lebesgue measure of $\Lambda$.
If $|\L|=\infty$, then neither of the terms on the right-hand side 
of \eqref{Dalpha:eq} is trace class (except in trivial cases), 
but their difference is trace class, under the conditions adopted in this paper. 
We must emphasise that it is essential to us to consider $\L$ in \eqref{Dalpha:eq} 
of infinite measure.

Asymptotic properties of $D_\a(a, \L; f)$ 
have been extensively studied in the literature, 
with the majority of results obtained in the 1980's. All 
results obtained at that time pertained to the case of 
smooth functions $f$ (or more precisely, smooth on the range  
of the symbol $a$) and bounded $\L$. 
Under these assumptions, 
the case of a smooth symbol $a$ was 
understood particularly well: the full asymptotic expansion of 
$\tr D_\a(a, \L; f)$ in powers of 
$\a^{-1}$ was derived by 
A.~Budylin--V.~Buslaev \cite{BuBu} 
and  H.~Widom \cite{Widom_85}. 
The paper \cite{Widom_85} also 
provides a brief historical account of this problem. 
Out of all relevant bibliography we mention just one 
other paper by H.~Widom, \cite{Widom_82}, 
whose ideas we exploit in some of our proofs. 

Another important and challenging 
problem is to study the asymptotics of the trace of 
$D_\a(a, \L; f)$ for discontinuous symbols, in particular,
for symbols of the form $a = \chi_\Om$ with a bounded region $\Omega\subset\R^d$.
This problem was studied by H.~Landau--H.~Widom \cite{Land_Wid},  
H.~Widom \cite{Widom_821} (for $d=1$) 
and by A.V.~Sobolev \cite{Sob, Sob2} (for arbitrary $d\ge 1$). 
It was found that 
\begin{equation}\label{Widom_conj:eq}
\tr D_\a(a, \L; f) = \mathfrak W_1 \,\a^{d-1}\log(\a) + o(\a^{d-1}\log(\a))\,, \;\a\to\infty,
\end{equation}
for a bounded domain $\L\subset \R^d$ with an explicitly given coefficient 
$\mathfrak W_1 = \mathfrak W_1(\p\L, \p\Om, f)$. The discontinuity 
of the symbol $a$ can be interpreted 
as the presence of one of the 
two \textit{Fisher--Hartwig singularities} 
investigated in detail for 
truncated Toeplitz matrices, that is,
for the discrete counterpart 
of Wiener--Hopf operators, see \cite{DIK}.  

In recent years, new demands 
for the asymptotics of traces 
of Wiener--Hopf operators emerged, 
which have been triggered by applications 
to (quantum) statistical mechanics. 
Our interest originates from the 
large-scale behaviour of the spatially 
bipartite entanglement entropy 
(EE, also called mutual information) 
of free fermions in thermal equilibrium. 
Here one faces several mathematical challenges at the same time. 

\begin{enumerate}
\item \textbf{Non-smooth functions $f$.}
One needs to consider the operator 
\eqref{Dalpha:eq} with 
functions $f$ that lack smoothness at finitely many points,  
or, which is the same in view of additivity, at one point. 
The functions of interest are the 
\textit{$\g$-R\'enyi entropy functions} $\eta_\g, \g >0,$ 
that are defined 
in \eqref{eta_gamma:eq} and \eqref{eta1:eq}.

\item 
\textbf{Unbounded $\L$.}
One needs to consider the operator \eqref{Dalpha:eq} 
with unbounded domains $\L$, in contrast to most of 
the previously known results. 
\item 
\textbf{Uniform estimates.}
In quantum-mechanical applications, apart from the scaling parameter, 
it is natural to control the dependence of the symbol $a$ 
on other parameters such as the temperature $T\ge 0$. Thus it is necessary 
to provide estimates and asymptotic remainder estimates that are uniform in the 
symbol $a$ in some broad sense. 
For example, in the study of the entanglement entropy  
the symbol $a$ in the operator \eqref{Dalpha:eq} 
is given by the Fermi symbol $a_{T, \mu}$, see 
the definition \eqref{positiveT:eq}, and one 
needs to control the $T$-dependence of the estimates. 
%
%
%
This requires substantial extra work since the results of \cite{Widom_82, Widom_85} are not directly applicable.
\end{enumerate}

%
A general approach to the study of operator differences of the form 
$P f(PAP)P - Pf(A)P$ with a self-adjoint operator $A$, an orthogonal projection 
$P$ and a non-smooth function $f$, was put forward in \cite{Sob_14}. 
One application of the results in \cite{Sob_14} is the extension of \eqref{Widom_conj:eq}   
to non-smooth functions $f$ under the assumption that either $\L$ or its complement 
is bounded, thereby tackling challenges (1) and (2) above. 

The special case $a(\bxi) = \chi_\Om(\bxi)$ for bounded 
$\Om, \L\subset\R^d$ was considered even earlier in \cite{LeSpSo}. 
In the quantum-mechanical context, formula \eqref{Widom_conj:eq}, 
if used with $a = \chi_\Om$ and the function 
$f=\eta_\g$,  gives the large-scale asymptotics of 
the entanglement entropy at zero temperature with Fermi sea $\Om$, 
see also \cite{GK} for some other motivation.

In the present paper we work exclusively with 
smooth symbols $a$ with a fast decay at infinity. 
The function $f$ is allowed to lack smoothness at one point, see 
Condition \ref{f:cond}. 
A typical example of such a function is $f(t) = |t|^\g, \g >0$. 
The region $\L$ is such that either $\L$ or $\R^d\setminus\L$ is bounded, 
see Condition \ref{domain:cond} for details.   
 
The goal of this paper is two-fold, and it correspondingly 
splits in two parts. 

 \textbf{Part 1: Sections  
\ref{sect:estimates}--\ref{multiple:sect}.}   
First we establish some explicit estimates for the 
(quasi-) norms of the operator \eqref{Dalpha:eq} in the Schatten--von Neumann 
classes $\GS_q$, $q\in (0, 1]$. Later on we need only trace class norms, but 
the more general $\GS_q$-estimates are obtained at ``no extra cost", 
and are provided for the sake of completeness. Here we rely 
on the results of \cite{Sob_14} 
where this problem was studied in the abstract setting. We quote these results 
in Proposition \ref{Szego1:prop}. 
%
%
Indeed, the very fact that 
$D_\a(a, \L; f)\in \GS_q$ is an almost direct consequence 
of Proposition \ref{Szego1:prop}, 
but this alone is insufficient for us since we need sharp 
explicit estimates, uniform in $a$. 
Thus we identify a class of symbols $a$ that we call 
\textit{multi-scale symbols}, and establish explicit estimates 
for $\|D_\a(a, \L; f)\|_{\GS_q}$, 
which are uniform in some suitable sense, see Remark 
\ref{uniform:rem}. They do turn 
out to be sharp in $\a$ and $T$ 
when used for the symbol 
\eqref{positiveT:eq}, which serves as our leading example. The main estimate is contained in Theorem \ref{multi:thm}. This takes care of issue (3) above. 

Our next result is the asymptotic formula for $\tr D_\a(a, f; \L)$ 
as $\a\to\infty$, 
for spatial dimension $d = 1$, see Section \ref{sect:asympt}. 
Here we assume again that $a$ is a multi-scale symbol, and 
the main objective is to have the explicit 
control of the remainder, see 
Theorems \ref{interval_fn:thm} and 
\ref{scale_asymp:thm}.  
As mentioned earlier, we follow the seminal 
ideas of H.~Widom, who proved such asymptotic 
results for smooth functions $f$ already in the 1980's, see \cite{Widom_80,Widom_82,Widom_85,Widom_87}. 
The proofs of the main asymptotic results of Section \ref{sect:asympt} are presented in Sections \ref{proofs:sect} and \ref{res1:sect}. 
To accomplish this we use rather a standard methodology 
of quasiclassical analysis: first we prove the required 
asymptotics for smooth functions $f$, and then using the bounds 
from Theorem \ref{multi:thm} we extend them to non-smooth ones. 
The starting point 
is the Helffer--Sj\"ostrand formula (see Appendix A) 
which rewrites the trace of $D_\a(a,\L; f)$ for smooth $f$  
in terms of $D_\a(a,\L;r_z)$ with 
the resolvent function $r_z(\l) := (\l-z)^{-1},\l\in\R$, 
$z\in\mathbb C$. 

 \textbf{Part 2: Sections \ref{gen_bounds:sect}--\ref{entropy:sect}.} 
Here we apply the results obtained in Part 1 to the symbol 
\begin{equation}\label{positiveT:eq} 
a(\bxi) := a_{T, \mu}(\bxi) 
:= \frac{1}{1+ \exp\big(\frac{h(\bxi) - \mu}{T}\big)}\,,\quad \bxi\in\R^d,
\end{equation}
which is nothing but the Fermi symbol of free Fermions.
Here the real-valued 
function $h = h(\bxi)$ is the classical one-particle Hamiltonian of the free Fermi gas, 
$h(\bxi)\to\infty$ as $|\bxi|\to\infty$, 
the parameter $T>0$ is the (absolute) temperature, and 
$\mu \in\R$ is the chemical potential. 
We always assume that $\mu$ is fixed and $T\in (0, T_0]$ for some $T_0>0$. 
We are interested in the behaviour of 
$D_\a(T) = D_\a(a_{T, \mu}, \L; f)$ 
when $\a\to\infty$ and $T\downarrow 0$ simultaneously.  
The symbol $a_{T, \mu}$ fits in the formalism of 
multi-scale symbols, 
laid out in Section \ref{estim:sect}, and as a result we derive from  
Theorem \ref{multi:thm} a sharp estimate for the trace norm of 
$D_\a(T)$ with explicit dependence 
on $T$ and $\a$, under the condition $\a T\ge 1$, see Theorem \ref{entropy:thm}.  
For $d=1$ 
the sharpness of this estimate is confirmed by the asymptotic formulas
\eqref{fn_lowT:eq}, \eqref{fn_lowT1:eq} for the trace of $D_\a(T)$, that are 
derived from Theorem \ref{scale_asymp:thm}. 
The extension of this large-scale asymptotics to 
dimensions $d\ge2$ is the 
content of a separate paper \cite{Sob-future}. 
 
In Section \ref{entropy:sect} we specialise further to the 
function $f = \eta_\g, \g >0$, which brings us to 
the main application of our results, that is, to the large scale 
 asymptotic formulas 
 for the  entanglement entropy (EE) 
 $\mathrm{H}_\g(T, \mu; \a\L)$  
 of free fermions in thermal equilibrium 
 associated with the bipartition $\R^d = \L \cup \L^c$ 
 with a bounded $\L\subset\R^d$, at temperature $T>0$.  
 
As pointed out earlier, by now the EE is well-understood at 
zero temperature (see \cite{GK,LeSpSo}), which corresponds to the case when the Fermi symbol 
$a$ is given by the indicator function 
$\chi_\Om$ of the Fermi sea $\Om\subset\R^d$. 
In this case the EE exhibits a
logarithmically enhanced area-law scaling of the form 
\eqref{Widom_conj:eq}. 
The case $T>0$ is somewhat trickier:  the entropy 
of the total system on $\mathbb R^d$, that is,
$\tr \eta_\g(W_\a(a_{T, \mu};\mathbb R^d))
= \tr \op_\a(a_{T, \mu})$, 
is infinite, and hence it is not clear in advance 
even how to define the EE 
in a meaningful way. Intuitively, the EE measures
the difference between the sum of the 
entropies of the states localised to $\L$ and $\L^c$ and
the entropy of the total system. Therefore, a physically and mathematically
reasonable definition of the EE is given in \eqref{def:EE} below. By that we not only 
ensure the finiteness of the EE, but are also able to obtain sharp (in $\a$ and $T$) 
upper bounds in any spatial dimension $d\ge 1$. In Theorem 
\ref{EE_bound:thm} we show that 
$|\mathrm{H}_\g(T, \mu; \a\L)|\le C\a^{d-1}(|\log(T)|+1)$
\footnote{Here and everywhere below by $C$ or $c$, with or without indices, we denote 
positive, finite constants, whose exact values are unimportant.}, 
if $\a\ge1$, $\a T\ge 1$. This bound tallies well with the 
asymptotics \eqref{Widom_conj:eq}, and thus supports   
the intuitive expectation that the scaling behaviour of the EE at $T>0$ 
should resemble more and more the zero temperature behaviour,  
as $T\downarrow 0$. For $d=1$ this expectation is further justified 
by the asymptotic formulas \eqref{EEas:eq} and \eqref{tostep:eq}, 
derived from \eqref{fn_lowT:eq}, see Section \ref{low T:sect} for the low $T$-behaviour of 
the asymptotic coefficient. As a by-product, this leads to the two-term asymptotic 
expansion of the local thermal entropy of the free Fermi gas, 
which extends the hitherto known leading Weyl asymptotics (see \cite{PS,BHK}). 

The paper \cite{LeSpSo2} presents results on the EE for the
one-dimensional and the multi-dimensional case without the underlying mathematical details. 
In combination with \cite{Sob-future} and \cite{LeSoSp-future} the present 
paper provides then a full proof of these announcements.

\section{Estimates}\label{sect:estimates}
 
\subsection{The Schatten--von Neumann ideals of compact operators} 
This paper relies on the results obtained in \cite{Sob_14} for general quasi-normed ideals of compact operators. 
Here we limit our attention to the case of Schatten--von Neumann operator ideals $\GS_q, q>0$. 
Detailed information on these ideals can be found e.g. in \cite{BS,GK,Pie,Simon}. We shall point out only some basic facts.
For a compact operator $A$ on a separable Hilbert space $\CH$ denote 
by $s_n(A), n = 1, 2, \dots$ its singular values, that is, the eigenvalues of the operator $|A| := \sqrt{A^*A}$. 
We denote the identity operator on $\CH$ by $\mathds{1}$. The Schatten--von Neumann ideal $\GS_q, q>0$ consists of all 
compact operators $A$, for which 
\begin{equation*}
	\|A\|_{\GS_q} := \biggl[\sum_{k=1}^\infty s_k(A)^q\biggr]^{\frac{1}{q}}<\infty.
\end{equation*}
If $q\ge 1$, then the above functional defines a norm; if $0<q <1$, then it is a so-called quasi-norm. 
There is nevertheless a convenient analogue of the triangle inequality, which is called 
the \textit{$q$-triangle inequality:}
\begin{equation}\label{qtriangle:eq} 
\|A_1+A_2\|_{\GS_q}^q\le \|A_1\|_{\GS_q}^q + \|A_2\|_{\GS_q}^q, \ \ A_1, A_2\in\GS_q,\ 0<g\le 1,
\end{equation}
and the H\"older inequality,
\begin{equation}\label{Holder:eq}
\|A_1 A_2\|_{\GS_q}\le \|A_1\|_{\GS_{q_1}} \cdot \|A_2\|_{\GS_{q_2}}, \ \ q^{-1} = q_1^{-1} + q_2^{-1}\,,\ \ 0 < q_1,q_2\le\infty\,,
\end{equation} 
see \cite{Rot} and also \cite{BS}. In what follows we focus on the case $q\in (0, 1]$. 

\subsection{Non-smooth functions}
We study non-smooth functions, satisfying the following condition:

\begin{cond}\label{f:cond}
For some integer $n \ge 1$ the function $f\in\plainC{n}(\R\setminus\{ t_0 \})\cap\plainC{}(\R)$ satisfies the bound 
\begin{equation}\label{fnorm:eq}
\1 f\1_n := \max_{0\le k\le n}\sup_{t\not = t_0} |f^{(k)}(t)| |t-t_0|^{-\g+k}<\infty
\end{equation}
with some $\g >  0$, and is supported on the interval $(t_0-R, t_0+R)$ with some $R>0$.  

The case $R\ = \infty$ means no restriction on the support of the function $f$. 
\end{cond}

Below we denote by $\chi_R$ the indicator function of the interval $(-R, R)$, $R>0$.
For a function $f$ satisfying the above condition the following bound holds for $t\not = t_0$: 
\begin{equation}\label{fbound:eq}
|f^{(k)}(t)| \le \1 f\1_n |t - t_0|^{\g-k}\chi_R(t - t_0), \ 
k = 0, 1, \dots , n. 
\end{equation}
If $n\ge 1$, then the above condition implies that with 
$\varkappa := \min\{1, \g\}$ 
the function $f$ is $\varkappa$-H\"older continuous --- 
we denote this set by $\plainC{0, \varkappa}(\R)$. 
In particular, one can show that for any $t_1, t_2\in\R$,
\begin{equation}\label{hol:eq}
|f(t_1) - f(t_2)|\le 2 R^{\g-\varkappa} 
\1 f\1_1 |t_1-t_2|^\varkappa,\ \varkappa = \min\{1,\gamma\}.
\end{equation}
The following Proposition was proved in \cite{Sob_14}. For simplicity we state it only for bounded self-adjoint operators. 
 
\begin{prop}\label{Szego1:prop} 
Suppose that $f$ satisfies Condition \ref{f:cond} with some $\g >0$,  $n\ge 2$ and 
some $t_0\in\R$, $R\in (0, \infty)$. Let $q$ be a number such that $(n-\s)^{-1} < q\le 1$ with some 
number $\s \in (0, 1]$, $\s < \g$. Let $A$ be a bounded self-adjoint operator and let $P$ be an orthogonal projection 
such that $PA(\mathds{1}-P)\in \GS_{\s q}$. Then 	
\begin{equation}\label{Szego:eq}
	\|f(PAP)P - P f(A)\|_{\GS_q}
	\le C \1 f\1_n R^{\g-\s} \|PA(\mathds{1}-P)\|_{\GS_{\s q}}^\s,
\end{equation}
with a positive constant $C$ independent of the operators $A, P$, the function $f$, and the parameters $R, t_0$.  
\end{prop}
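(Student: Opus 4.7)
The plan is a dyadic decomposition of $f$ around its singular point $t_0$, combined with the Helffer--Sj\"ostrand formula and a resolvent identity expressing $f(PAP)P - Pf(A)$ through the cross term $X := PA(\mathds{1}-P)$. After translating to $t_0 = 0$, fix a smooth partition of unity $\{\varphi_j\}_{j\ge 0}$ on $(-R, R)\setminus\{0\}$ adapted to the dyadic scales $r_j := 2^{-j}R$, with $\supp \varphi_j \subset \{r_j/2 \le |t| \le 2r_j\}$, and set $f_j := f\,\varphi_j$. Then \eqref{fbound:eq} gives $\|f_j^{(k)}\|_\infty \le C\1 f\1_n r_j^{\g - k}$ for $k = 0, 1, \dots, n$, with $\supp f_j \subset \{|t| \sim r_j\}$.

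For each smooth $f_j$, I would attach an almost-analytic extension $\tilde f_j$ of order $n$, confined to a strip of height $\sim r_j$ above the real axis and satisfying $|\bar\p \tilde f_j(x+iy)| \lesssim |f_j^{(n)}(x)||y|^{n-1}$ in the bulk (plus standard boundary contributions from the $y$-cutoff). Combining the Helffer--Sj\"ostrand representation with the elementary identity
\begin{equation*}
(PAP - z)^{-1} P - P(A - z)^{-1} = (PAP - z)^{-1}\, X\, (A - z)^{-1}
\end{equation*}
yields
\begin{equation*}
f_j(PAP) P - P f_j(A) = -\frac{1}{\pi}\int_{\mathbb C} \bar\p \tilde f_j(z)\,(PAP - z)^{-1}\, X\, (A-z)^{-1}\, dA(z).
\end{equation*}
Factoring $X = U|X|^{1-\s}|X|^\s$ via polar decomposition, H\"older's inequality \eqref{Holder:eq} together with $\||X|^\s\|_{\GS_q} = \|X\|_{\GS_{\s q}}^\s$ gives the pointwise bound
\begin{equation*}
\|(PAP-z)^{-1}X(A-z)^{-1}\|_{\GS_q}\le |\im z|^{-2}\,\|X\|^{1-\s}\,\|X\|_{\GS_{\s q}}^\s.
\end{equation*}
Inserting this into the integral representation and integrating the $\bar\p$-bound in $y$ over $|y| \lesssim r_j$ produces the per-scale estimate $\|f_j(PAP) P - P f_j(A)\|_{\GS_q} \le C\,\1 f\1_n\,\|X\|_{\GS_{\s q}}^\s\, r_j^{\g - \s}$. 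The condition $q > (n-\s)^{-1}$ enters precisely here: it is what is needed to validate the $\GS_q$-integration against $\bar\p\tilde f_j$ and to leave a sufficient positive exponent in $|y|$ to integrate absolutely.

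Summing over scales with the $q$-triangle inequality \eqref{qtriangle:eq},
\begin{equation*}
\|f(PAP)P - Pf(A)\|_{\GS_q}^q \le \sum_{j\ge 0}\|f_j(PAP)P - Pf_j(A)\|_{\GS_q}^q \le C\,\1 f\1_n^q\,\|X\|_{\GS_{\s q}}^{\s q}\sum_{j\ge 0}(2^{-j}R)^{(\g - \s)q};
\end{equation*}
the geometric series converges since $\g > \s$, yielding the factor $R^{(\g-\s)q}$, and taking the $q$-th root gives \eqref{Szego:eq}. The main obstacle is the per-scale estimate: calibrating the almost-analytic extension to the narrow support of $f_j$ and interpolating between the operator bound and the $\GS_{\s q}$ bound on $X$ so that exactly the exponent $\g-\s$ emerges at each dyadic scale, with all constants uniform in $A$, $P$, $f$, $R$, $t_0$. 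When $q < 1$, extra care is needed since $\GS_q$ is only a quasi-normed ideal, so the integral representations must be interpreted via absolutely convergent approximations and the $q$-triangle inequality applied at the discrete level --- this is precisely the technical machinery of \cite{Sob_14} being quoted.
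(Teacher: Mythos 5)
The paper offers no proof of this proposition: it is quoted from \cite{Sob_14}. Judged on its own merits, your reconstruction contains two genuine gaps, and the second shows that the Helffer--Sj\"ostrand route as you set it up cannot produce the stated exponents.

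First, for $q<1$ the ideal $\GS_q$ is only quasi-normed and there is no integral triangle inequality: $\bigl\|\int F(z)\,dA(z)\bigr\|_{\GS_q}$ is \emph{not} bounded by $\int\|F(z)\|_{\GS_q}\,dA(z)$. Your proposed repair --- discretise and apply \eqref{qtriangle:eq} --- does not work, because the discrete sum $\sum_k\Delta_k^q\,\|F(z_k)\|_{\GS_q}^q$ carries the factor $\sum_k\Delta_k^q$, which blows up as the mesh is refined when $q<1$. So the step ``insert the pointwise bound into the integral representation'' is unavailable in precisely the range of $q$ the proposition addresses.

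Second, even where the integral estimate is legitimate ($q=1$), your displayed bound $|\im z|^{-2}\|X\|^{1-\s}\|X\|^{\s}_{\GS_{\s q}}$ does not yield the per-scale estimate you assert. With $|\bar\p\tilde f_j|\lesssim \1 f\1_n\, r_j^{\g-n}|y|^{n-1}$ on a box of size $r_j\times r_j$, integrating against $|\im z|^{-2}$ gives $\1 f\1_n\, r_j^{\g-1}\|X\|^{1-\s}\|X\|^{\s}_{\GS_{\s q}}$: the exponent of $r_j$ is $\g-1$, not $\g-\s$, so the dyadic sum diverges whenever $\g\le 1$ (exactly the case needed for the R\'enyi functions), and the spurious factor $\|X\|^{1-\s}$ ruins both the scale invariance and the form of \eqref{Szego:eq}. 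No manipulation inside the $z$-integral can help, since every integrand estimate carries the full $|\im z|^{-2}$; your stated role for the hypothesis $q>(n-\s)^{-1}$ is likewise not reflected in any of your displayed inequalities (absolute convergence of the $y$-integral only requires $n>2$). The exponents $\g-\s$ and $\s$ must instead be generated by interpolating at the level of the whole difference $T_j:=f_j(PAP)P-Pf_j(A)$, via $\|T_j\|_{\GS_q}\le \|T_j\|^{1-\s}\,\|T_j\|_{\GS_{\s q}}^{\s}$, combining the trivial bound $\|T_j\|\le 2\|f_j\|_{\plainL\infty}\lesssim \1 f\1_n r_j^{\g}$ with a Lipschitz-type bound $\|T_j\|_{\GS_{\s q}}\lesssim \1 f\1_n r_j^{\g-1}\|X\|_{\GS_{\s q}}$; this produces $r_j^{\g(1-\s)}r_j^{(\g-1)\s}=r_j^{\g-\s}$ together with $\|X\|_{\GS_{\s q}}^{\s}$. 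Proving that second, quasi-normed operator-Lipschitz/H\"older ingredient for smooth compactly supported pieces is the actual content of \cite{Sob_14} (it rests on Aleksandrov--Peller-type machinery, and that is where the condition $q>(n-\s)^{-1}$ really enters). Your dyadic decomposition, the identity $(PAP-z)^{-1}P-P(A-z)^{-1}=(PAP-z)^{-1}X(A-z)^{-1}$, and the final summation are fine, but they do not substitute for that missing ingredient.
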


Since the operator $A$ is bounded, one does not have to 
assume that $f$ is compactly supported.  
The function $f$ can be always replaced by another function suitably localised to 
a bounded interval of size $2\|A\|$ around the origin. 
This observation allows us to obtain a bound of the correct 
degree of homogeneity. We state this fact as a corollary of Proposition \ref{Szego1:prop}.

\begin{cor} 
Suppose that the conditions of Proposition \ref{Szego1:prop} are satisfied with 
$R=\infty$. Assume in addition that $\|A\|\le 1$ and that $t_0 = 0$ 
in \eqref{fnorm:eq}. Then for any $\l >0$ we have 
\begin{equation}\label{Ab:eq}
\|f(\l PAP)P - P f(\l A)\|_{\GS_q}
\le C \1 f\1_n \l^\g \|PA(\mathds{1}-P)\|_{\GS_{\s q}}^\s, 
\end{equation} 
	with a positive constant $C$ independent of the operators $A, P$, 
	the function $f$ and the parameter $\l$.   
\end{cor}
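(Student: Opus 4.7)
The plan is to reduce to the hypotheses of Proposition \ref{Szego1:prop} by a rescaling combined with a spectral cutoff. Define
\[
g(t) := f(\lambda t), \qquad t\in\R,
\]
so that, by the spectral theorem, $g(\lambda^{-1}\,\lambda A) = f(\lambda A)$ and $g(\lambda^{-1}\,\lambda PAP) = f(\lambda PAP)$; more conveniently, $g(A) = f(\lambda A)$ and $g(PAP) = f(\lambda PAP)$. A direct chain-rule computation shows that $g$ satisfies Condition \ref{f:cond} with the same index $n$, the same singular point $t_0=0$, the same exponent $\gamma$, and
\[
\1 g\1_n \le \lambda^\gamma \1 f\1_n,
\]
since $|g^{(k)}(t)| = \lambda^k |f^{(k)}(\lambda t)| \le \lambda^k \1 f\1_n |\lambda t|^{\gamma-k} = \lambda^\gamma \1 f\1_n |t|^{\gamma - k}$. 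However, $g$ still has $R=\infty$, so the Proposition does not apply directly.

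Next I would localise $g$ to a bounded interval. Since $\|A\|\le 1$ we have $\sigma(A)\subset[-1,1]$ and $\sigma(PAP)\subset [-1,1]$, so it is legitimate to replace $g$ by any function that agrees with it on $[-1,1]$. Pick a fixed cutoff $\phi\in\plainC{\infty}_0(\R)$ with $\phi\equiv 1$ on $[-1,1]$ and $\supp\phi\subset(-2,2)$, and set $\tilde g := \phi\, g$. Then $\tilde g(A) = g(A)$ and $\tilde g(PAP) = g(PAP)$, while $\tilde g$ is supported in $(-2,2)$, i.e.\ $\tilde g$ satisfies Condition \ref{f:cond} with $R=2$. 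Using the Leibniz rule, for any $k\le n$ and $t\ne 0$,
\[
|\tilde g^{(k)}(t)| \le \sum_{j=0}^{k}\binom{k}{j}|\phi^{(j)}(t)|\,|g^{(k-j)}(t)|
\le C_n\,\lambda^\gamma\1 f\1_n\,|t|^{\gamma-k},
\]
because $|\phi^{(j)}|$ is bounded uniformly and, for $j\ge 1$, $\phi^{(j)}$ is supported in $\{|t|\ge 1\}$, so the missing factor $|t|^{-j}$ is harmless. Consequently $\1\tilde g\1_n \le C_n\,\lambda^\gamma \1 f\1_n$.

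Now I apply Proposition \ref{Szego1:prop} to $\tilde g$, the operator $A$, the projection $P$, with parameters $t_0=0$, $R=2$, obtaining
\[
\|\tilde g(PAP)P - P\tilde g(A)\|_{\GS_q}
\le C\,\1\tilde g\1_n\, 2^{\gamma-\sigma}\,\|PA(\mathds{1}-P)\|_{\GS_{\sigma q}}^\sigma
\le C'\,\lambda^\gamma\1 f\1_n\,\|PA(\mathds{1}-P)\|_{\GS_{\sigma q}}^\sigma,
\]
where $C'$ depends only on $n,\gamma,\sigma$ and the cutoff $\phi$. Since $\tilde g(PAP) = f(\lambda PAP)$ and $\tilde g(A) = f(\lambda A)$, this yields \eqref{Ab:eq}. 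There is no serious obstacle; the only point requiring care is the estimate of $\1\tilde g\1_n$, where one must check that multiplication by the smooth cutoff $\phi$ does not destroy the weighted bound in \eqref{fnorm:eq} — which it does not, precisely because the derivatives of $\phi$ are supported away from the singular point $t_0=0$.
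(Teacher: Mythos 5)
Your proof is correct and follows essentially the same route as the paper: rescale $f$ to $f(\l\,\cdot\,)$, observe that the weighted norm picks up exactly a factor $\l^\g$, and apply Proposition \ref{Szego1:prop} with a finite $R$ using $\|A\|\le 1$. The only difference is that you carry out explicitly the cutoff step (checking that multiplication by $\phi$ preserves the bound \eqref{fnorm:eq}) which the paper leaves implicit in the remark preceding the corollary.
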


  \begin{proof}
Let $f^{(\l)}(t) := \l^{-\g} f(\l t)$, so that 
$\1 f^{(\l)}\1_n = \1 f\1_n$.
Since $\|A\|\le 1$,   
Proposition \ref{Szego1:prop} with $R = 2$ 
leads to the bound 
\begin{equation*}
	\|f^{(\l)}(PAP)P - P f^{(\l)}(A)\|_{\GS_q}
	\le C \1 f\1_n \|PA(\mathds{1}-P)\|_{\GS_{\s q}}^\s.
\end{equation*}	 
Substituting the definition of $f^{(\l)}$ we get \eqref{Ab:eq}.
\end{proof}

As far as the $\l$-behaviour is concerned, the above estimate is sharp, since for 
$f(t) = |t|^\g$, $\g >0$, both sides have the same homogeneity in $\l$. 
We include 
such estimates where an operator (or later, a symbol) is scaled by $\lambda$ in this paper 
for completeness although the main application will appear only 
in \cite{LeSoSp-future}.

We point out one special case of the non-homogeneous function $\eta$ defined as 
\begin{equation}\label{eta:eq}
\eta(t) := -t \log |t|, t\in \R,
\end{equation}
which nevertheless leads to a homogeneous estimate:

\begin{cor}\label{Ab_eta1:cor}
	Let $q\in (0, 1]$, and let $A$ be a bounded self-adjoint operator and let $P$ be an orthogonal projection 
	such that $\|A\|\le 1$ and 
	$PA(\mathds{1}-P)\in \GS_{\s q}$ for some $\s\in (0, 1)$. Then for any $\l >0$, 	
	\begin{equation}\label{Ab_eta1:eq}
	\| \eta(\l PAP) - P \eta(\l A) P\|_{\GS_q}
	\le C_\s \l \| PA(\mathds{1}-P)\|_{\GS_{\s q}}^\s,
	\end{equation}
	with a positive constant $C_\s$ independent of the operators $A, P$ and the parameter $\l$.  
\end{cor}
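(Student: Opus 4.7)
\textbf{Proof plan for Corollary \ref{Ab_eta1:cor}.}
The plan is to reduce the estimate first to the case $\l=1$, then to a compactly supported function, and finally to invoke Proposition \ref{Szego1:prop}. The key algebraic fact is the scaling identity
\begin{equation*}
\eta(\l t) = -\l t\log \l - \l t \log|t| = \l\, \eta(t) - \l (\log\l)\, t,\qquad \l>0.
\end{equation*}
Applied separately to $\l PAP$ and $\l A$ (and sandwiching the latter with $P$ on both sides) the linear pieces $-\l(\log\l) PAP$ cancel exactly, so
\begin{equation*}
\eta(\l PAP) - P\eta(\l A)P = \l \bigl[\eta(PAP) - P\eta(A)P\bigr].
\end{equation*}
This reduces the problem to showing $\|\eta(PAP) - P\eta(A)P\|_{\GS_q} \le C_\s \|PA(\mathds{1}-P)\|_{\GS_{\s q}}^\s$, with the linear factor $\l$ then reappearing to give the stated bound.

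Next I would localise $\eta$. Fix a cut-off $\phi\in\plainC{\infty}_0(\R)$ with $\phi\equiv 1$ on $[-1,1]$ and $\supp\phi\subset(-2,2)$, and set $\tilde\eta := \phi\eta$. Since $\|A\|\le 1$ and hence $\|PAP\|\le 1$, the spectral theorem gives $\eta(A)=\tilde\eta(A)$ and $\eta(PAP)=\tilde\eta(PAP)$, so we may work throughout with $\tilde\eta$, which is now supported in $(-2,2)$.

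Now I would verify that $\tilde\eta$ satisfies Condition \ref{f:cond} with $t_0 = 0$, $R=2$ and an exponent $\g = 1-\d$ for an arbitrary small $\d>0$. Away from $0$, $\tilde\eta\in\plainC{\infty}$ with bounded derivatives (supported in $|t|\le 2$). Near $0$ one has $|\tilde\eta^{(k)}(t)|\le C_k |t|^{1-k}(1+|\log|t||)$ for $k\ge 1$ and $|\tilde\eta(t)|\le C|t|(1+|\log|t||)$. Using $|\log|t||\le C_\d |t|^{-\d}$ on $(-2,2)$, I absorb the logarithm into a power to conclude
\begin{equation*}
|\tilde\eta^{(k)}(t)|\le C_{k,\d}\, |t|^{(1-\d)-k}\chi_2(t),\qquad k=0,1,\dots,n,
\end{equation*}
so $\1\tilde\eta\1_n<\infty$ with $\gamma = 1-\d$. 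Given $\s\in(0,1)$, I choose $\d<1-\s$ so that $\s<\g$, and then choose $n$ large enough so that $(n-\s)^{-1}<q$, which is always possible as $\tilde\eta$ is $\plainC{\infty}$ off $0$. Proposition \ref{Szego1:prop} then yields
\begin{equation*}
\|\tilde\eta(PAP)P - P\tilde\eta(A)\|_{\GS_q}\le C\1\tilde\eta\1_n\, R^{\g-\s}\|PA(\mathds{1}-P)\|_{\GS_{\s q}}^\s = C_\s \|PA(\mathds{1}-P)\|_{\GS_{\s q}}^\s,
\end{equation*}
and restoring the factor $\l$ from the first step finishes the proof.

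The only delicate point is the failure of $\eta$ to be in $\plainC{n}$ with clean power-law derivative bounds at the origin, owing to the logarithm. Absorbing $|\log|t||$ into $|t|^{-\d}$ for an arbitrarily small $\d>0$ is what renders Condition \ref{f:cond} applicable; the trade-off is that the constant $C_\s$ in the final inequality depends on $\d$ (and hence on $\s$ and $q$), which is consistent with the statement.
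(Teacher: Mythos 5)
Your proposal is correct and follows essentially the same route as the paper: the exact cancellation of the $-\l(\log\l)\,t$ terms in the scaling identity for $\eta$, followed by the observation that $\eta$ satisfies Condition \ref{f:cond} on a bounded interval with $t_0=0$, any $\g\in(\s,1)$ and arbitrarily large $n$, and then Proposition \ref{Szego1:prop}. Your write-up merely makes explicit the localisation and the absorption of the logarithm into a small power, which the paper leaves implicit.
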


\begin{proof} We write
\begin{equation*}
\eta(\l PAP) - P \eta(\l A) P = \l \bigl( \eta(PAP)P - P \eta(A)\bigr) P.
\end{equation*}
The function $\eta$ satisfies \eqref{fnorm:eq} with an arbitrary $\gamma\in (\s, 1)$, and arbitrarily 
large $n$, on any bounded interval centred at $t_0 = 0$. Now Proposition \ref{Szego1:prop} leads to the claimed estimate. 
\end{proof}

\section{Estimates for multidimensional Wiener--Hopf operators} 
\label{estim:sect}

\subsection{Definitions}
Now we derive from Proposition \ref{Szego1:prop} some estimates for 
Wiener--Hopf operators on $\plainL2(\R^d)$. 
In this paper, under Wiener--Hopf operators 
we understand operators of the form 
\eqref{WH:eq}, with a set $\L\subset\R^d$  and symbol $a = a(\bxi)$.
Throughout the paper we assume that $a\in\plainL\infty(\R^d)$ so that
the operator $\op_\a(a)$ is bounded with $\|\op_\a(a)\| = \|a\|_{\plainL\infty}$. Later we will
assume that $a$ satisfies some smoothness conditions. 
Our focus is on the operator difference
\eqref{Dalpha:eq} with suitable functions $f$. The right-hand side of 
\eqref{Dalpha:eq} is well defined for a large class of functions $f$. 
We are mostly interested in functions $f$ 
satisfying Condition \ref{f:cond}. Our immediate objective is to obtain 
for the operator \eqref{Dalpha:eq} estimates in the 
Schatten--von Neumann classes $\GS_q, q\in (0, 1]$. 
These will be derived from appropriate $\GS_q$-bounds for the operator 
\begin{equation}\label{off_diag:eq}
\chi_\L \op_\a(a) (\mathds{1}-\chi_\L).
\end{equation}
Bounds of this type were proved in \cite{Sob1}. To state them properly we need 
to specify precise conditions on the set $\L$ and the symbol $a$. 

We call a domain (an open, connected set) Lipschitz, if it can be described locally 
as a set above the graph of a Lipschitz function, 
see \cite{Sob1} for details. We call $\L$ a Lipschitz region if $\L$ is a union of 
finitely many Lipschitz domains such that their closures are pair-wise disjoint.

\begin{cond}\label{domain:cond} For $d\ge 1$ the set $\L\subset \R^d$ satisfies one of the following requirements: 
	\begin{enumerate}
		\item If $d=1$, then $\L$ is a finite union of open intervals (bounded or unbounded) such that their closures are pair-wise disjoint. 
		\item If $d\ge 2$, then $\L$ is a Lipschitz region, 
		and either $\L$ or $\R^d\setminus\L$ is bounded. 
	\end{enumerate}
\end{cond}

We rely on the bounds for the operator \eqref{off_diag:eq} obtained in \cite{Sob1}. 
They were derived for symbols $a$ satisfying the following support condition:  
\begin{equation}\label{supporta:eq}
\textup{support of the symbol $a$ is contained in 
	$B(\bmu, \tau) := \{\boldeta\in\R^d: |\boldeta-\bmu|<\tau\}$,}
\end{equation}
with some constant $\tau>0$ and $\bmu\in \R^d$. 
For methodological purposes we also introduce a smooth function $\varphi$
which is often assumed to satisfy this condition: 
\begin{equation}\label{supporth:eq}
\textup{support of the function $\varphi$ is contained in $B(\bz, \ell)$,}
\end{equation}
with some constant $\ell >0$ and $\bz\in\R^d$. 

The bounds from \cite{Sob1} also allow one to control the scaling properties through the norms 
\begin{equation}\label{norm:eq}
\SN^{(m)}(a; \tau) := \underset{0\le r\le m}
\max \ \underset{\bxi\in\R^d}
\sup \ \tau^{r}
|\nabla_{\bxi}^r a(\bxi)|, m = 1, 2, \dots,
\end{equation}
and similarlily defined norms $\SN^{(n)}(\varphi; \ell)$.
Now we can quote the result from \cite{Sob1}. The constants 
in the estimates below are independent of the symbol $a$, the function 
$\varphi$ and the parameters 
$\a, \tau, \ell$, as well as the points $\bmu$, $\bz$. 

\begin{prop}See \cite[Corollary 4.4]{Sob1} \label{cross_smooth:prop}
	Let the region $\L$ satisfy Condition 
	\ref{domain:cond},  let the symbol $a=a(\bxi)$ and 
	the function $\varphi = \varphi(\bx)$ satisfy the conditions 
	\eqref{supporta:eq} and \eqref{supporth:eq} respectively.  
	Define for some $q\in (0, 1]$ the natural numbers $m, n$ by
	\begin{equation}\label{m:eq}
	m := \lceil (d+1)q^{-1}\rceil+1, \  n := \lceil dq^{-1}\rceil+1.
	\end{equation}
	If $\a\tau\ell\ge \a_0>0$, then for any $q\in (0,1]$
	\begin{equation*}
		\|\chi_\L \varphi \op_\a(a) (\mathds{1}-\chi_\L)\|_{\GS_q}
		\le C_q (\a\tau\ell)^{\frac{d-1}{q}} 
		\SN^{(n)}(\varphi; \ell)\SN^{(m)}(a; \tau).
	\end{equation*}
	  
\end{prop}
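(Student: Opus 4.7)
The plan is to reduce the assertion to a normalized model case by rescaling, and then use the fact that the off-diagonal part of a pseudo-differential operator with smooth, compactly supported symbol is effectively localized in a $(\a\tau)^{-1}$-thick neighbourhood of the truncation boundary $\p\L$, on which a local Schatten estimate is available.

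First, I would normalize the parameters. Set $\beta:=\a\tau\ell$ and change variables $\tilde\bx := (\bx-\bz)/\ell$, $\tilde\bxi := \tau(\bxi-\bmu)$. Under the induced unitary dilation on $\plainL2(\R^d)$ and the modulation by $e^{i\a\bmu\cdot\bx}$ (which commutes with $\chi_\L$ and with $\varphi$), the operator $\op_\a(a)$ becomes $\op_\beta(\tilde a)$ where $\tilde a(\boldeta):=a(\bmu+\boldeta/\tau)$ is supported in the unit ball of $\R^d$ and satisfies $\|\nabla^r\tilde a\|_\infty\le \SN^{(m)}(a;\tau)$ for $r\le m$. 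Similarly $\varphi$ becomes $\tilde\varphi$ supported in the unit ball, with derivative bounds inherited from $\SN^{(n)}(\varphi;\ell)$; and $\L$ becomes the Lipschitz region $\tilde\L := \ell^{-1}(\L-\bz)$, whose Lipschitz character is preserved. Since the $\GS_q$-norm is unitarily invariant, the claim reduces to proving
\begin{equation*}
\|\chi_{\tilde\L}\,\tilde\varphi\,\op_\beta(\tilde a)(\mathds{1}-\chi_{\tilde\L})\|_{\GS_q}\le C_q\,\beta^{(d-1)/q}
\end{equation*}
uniformly for data with $\|\nabla^r\tilde a\|_\infty\le 1$, $\|\nabla^r\tilde\varphi\|_\infty\le 1$ for $r\le m,n$ respectively, and for $\beta\ge \a_0$.

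Second, I would localize near $\p\tilde\L$. Integration by parts in $\boldeta$ gives the kernel estimate $|K(\bx,\by)|\le C_N\beta^d(1+\beta|\bx-\by|)^{-N}$ for $N\le m$, so the operator in question is, up to a rapidly decaying remainder, supported where $\bx\in\tilde\L$ and $\by\in\tilde\L^c$ lie within distance $O(\beta^{-1})$ of $\p\tilde\L$. Cover $\p\tilde\L\cap\supp\tilde\varphi$ by $O(\beta^{d-1})$ cubes of side $\beta^{-1}$, subordinate them to a smooth partition of unity, and in each cube straighten the boundary via the Lipschitz chart. This reduces the estimate to $O(\beta^{d-1})$ copies of a model half-space truncation of a pseudo-differential operator with normalized symbol. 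On each such model piece the $\GS_q$-norm, $q\in(0,1]$, is $O(1)$ provided the symbol has $m=\lceil(d+1)/q\rceil+1$ derivatives and the amplitude has $n=\lceil d/q\rceil+1$ derivatives; these are precisely the exponents in \eqref{m:eq}, required by the Birman--Solomyak type Schatten-$q$ criterion for integral operators in order that the kernel together with enough moments be $\plainL{q}$-summable. Assembling the $O(\beta^{d-1})$ local pieces via the $q$-triangle inequality \eqref{qtriangle:eq} yields
\begin{equation*}
\sum_j \|T_j\|_{\GS_q}^q \le C_q\,\beta^{d-1},
\end{equation*}
which, upon taking the $q$-th root, produces the claimed bound.

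The main obstacle is the base-case estimate on the half-space when $q<1$: standard Hilbert-space techniques (Macaev ideals, Birkhoff--Schur interpolation) are no longer directly available, and one must construct an explicit almost-diagonal block decomposition of the boundary-truncated oscillatory integral operator whose singular values have the required $q$-summability. Producing this decomposition with constants depending only on $q,d$ and a few derivative norms, and verifying that the Lipschitz straightening step does not destroy the required smoothness count on $\tilde a$, is the technical heart of the argument in \cite{Sob1}.
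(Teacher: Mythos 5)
This proposition is not proved in the paper at all: it is imported verbatim from \cite[Corollary 4.4]{Sob1} and used as a black box, so there is no internal argument to compare yours against. Judged on its own terms, your sketch captures the correct skeleton --- rescale to the unit scale so that the effective quasiclassical parameter is $\beta=\a\tau\ell$, observe that the off-diagonal operator lives in a $\beta^{-1}$-neighbourhood of $\p\L$, cover that neighbourhood by $O(\beta^{d-1})$ blocks, and sum with the $q$-triangle inequality \eqref{qtriangle:eq} to get the exponent $(d-1)/q$. That bookkeeping is right (modulo a slip: the rescaled symbol should be $\tilde a(\boldeta)=a(\bmu+\tau\boldeta)$, not $a(\bmu+\boldeta/\tau)$, in order to be supported in the unit ball and to have the phase produce $\a\tau\ell$ rather than $\a\ell/\tau$).

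The genuine gap is that the entire analytic content of the proposition is concentrated in the step you declare to be "the technical heart of the argument in \cite{Sob1}" and do not carry out: the uniform $\GS_q$-bound, $q\in(0,1]$, for a single unit-scale boundary block, together with the $\GS_q$-control of the off-diagonal tail with separated supports. For $q<1$ neither follows from kernel decay $|K(\bx,\by)|\le C_N\beta^d(1+\beta|\bx-\by|)^{-N}$ alone; one needs a quantitative Birman--Solomyak-type criterion expressing singular-value summability through mixed Sobolev--Lebesgue norms of the kernel, and it is precisely there that the derivative counts $m=\lceil(d+1)q^{-1}\rceil+1$ and $n=\lceil dq^{-1}\rceil+1$ of \eqref{m:eq} are consumed. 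Your proposal asserts that these are "precisely the exponents required" without deriving them, and likewise does not verify that the Lipschitz straightening and the partition of unity preserve the needed regularity budget. As a blind proof the proposal is therefore an outline that reduces the statement to the very lemma it was supposed to establish; to be complete it would have to reproduce the $\GS_q$ block estimate from \cite{Sob1} (or an equivalent), not cite it.
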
 
   
In the next subsection we extend Proposition \ref{cross_smooth:prop}  
to more general symbols $a$. 

\subsection{Multi-scale symbols, $a$} 
We consider $\plainC\infty$-symbols  
$a = a(\bxi)$ 
for which there exist positive continuous functions 
$v = v(\bxi)$ and $\tau = \tau(\bxi)$  
and constants $C_k, k=0,1,2,\ldots$ 
such that
\begin{equation}\label{scales:eq}
|a(\bxi)|\le C_0 v(\bxi), \ |\nabla_{\bxi}^k a(\bxi)|\le C_k 
\tau(\bxi)^{-k} v(\bxi),\ k = 1, 2, \dots,\quad \bxi\in\R^d.
\end{equation}
It is natural to call $\tau$ the \textit{scale (function)} 
and $v$ the \textit{amplitude (function)}. 
We  refer to symbols 
$a$ satisfying \eqref{scales:eq} as \textit{multi-scale} symbols.
In fact, in what follows, only some finite smoothness 
of the symbol $a$ is sufficient, 
but in most cases we impose the $\plainC\infty$-smoothness 
in order to avoid cumbersome formulations.
It is convenient to introduce the notation 
\begin{equation}\label{Vsigma:eq}
V_{\s, \rho}(v, \tau)  := \int \frac{v(\bxi)^\s}{\tau(\bxi)^\rho}d\bxi, \ 
\s>0, \rho\in\R.
\end{equation}
Apart from the continuity we often need some extra conditions on the scale 
and the amplitude. 
First we assume that $\tau$ is globally Lipschitz,
that is,
\begin{equation}\label{Lip:eq}
|\tau(\bxi) - \tau(\boldeta)| \le \nu |\bxi-\boldeta|,\ \ \bxi,\boldeta\in\R^d,
\end{equation}
 with some $\nu>0$.  
By adjusting the constants $C_k$ in \eqref{scales:eq} we may assume that 
$\nu<1$. It is straightforward to check that 
\begin{equation}\label{dve:eq}
(1+\nu)^{-1}\le 
\frac{\tau(\bxi)}{\tau(\boldeta)} 
\le (1-\nu)^{-1},\ \ \boldeta\in B\bigl(\bxi, \tau(\bxi)\bigr).
\end{equation}
Under this assumption on the scale 
$\tau$, the amplitude $v$ is assumed to satisfy the bounds
\begin{equation}\label{w:eq}
C_1 \le \frac{v(\boldeta)}{v(\bxi)}
\le C_2,\ \boldeta\in B\bigl(\bxi, \tau(\bxi)\bigr),
\end{equation}
with some positive constants 
$C_1, C_2$ independent of $\bxi$ and $\boldeta$.  
The condition $\nu < 1$ guarantees 
that one can construct a covering of $\R^d$ by open balls 
centred at some points $\bxi_j, j=1,2, \dots$ 
of radius $\tau_j := \tau(\bxi_j)$, which satisfies 
the \textit{finite intersection property,} that is,
the number of intersecting balls is bounded from above  
by a constant depending only on the 
parameter $\nu$, see \cite[Chapter 1, Theorem 1.4.10]{Hor}.
We denote $B_j := B(\bxi_j, \tau_j)$. Moreover, 
there exists a partition of unity $\phi_j\in\plainC\infty_0(\R^d)$ 
subordinate to the above covering such that 
\begin{equation}\label{partition:eq}
|\nabla_{\bxi}^k \phi_j(\bxi)|\le C_k \tau_j^{-k},\ k = 0, 1, \dots, \ \bxi\in\R^d,
\end{equation}
with some constants $C_k$ independent of $j = 1, 2, \dots$.

It is useful to think of $v$ and $\tau$ as 
(functional) parameters. They, in turn, can depend on other 
parameters, e.g. numerical parameters like $\a$.  
In our leading example of the Fermi symbol \eqref{positiveT:eq}, the function 
$\tau$ is naturally chosen to be dependent on the temperature $T>0$, 
see \eqref{tau_global:eq}.

\begin{rem}\label{uniform:rem}
Our aim is to derive various trace-norm estimates (resp.~asymptotics)
with explicit or implicit constants that are  independent 
of the functions $\tau$, $v$, $a$, but may 
depend on the constants in \eqref{scales:eq} and the domain $\L$. 
If the functions $\tau$, $v$ are required to satisfy  \eqref{Lip:eq} and \eqref{w:eq}, then 
the constants in the trace-norm estimates (resp.~asymptotics) may also depend on the constants $\nu$ and $C_1, C_2$ in \eqref{w:eq}. 
In all these cases we say that the estimates (resp.~asymptotics) 
are uniform in $\tau, v$ and $a$.  
%

 
In the example of the symbol 
\eqref{positiveT:eq}, the above uniformity 
allows us to control explicitly the dependence 
of the obtained bounds on the temperature. 
\end{rem}

In what follows we always assume that 
\begin{equation}\label{tauinf:eq}
\tau_{\textup{\tiny inf}} := \inf_{\bxi\in\R^d}\tau(\bxi)>0. 
\end{equation}
The constants in the obtained estimates will be independent 
of the parameters $\a, \tau_{\textup{\tiny inf}},\ell$, 
satisfying the assumption  
\begin{equation}\label{tau_low:eq}
\a \tau_{\textup{\tiny inf}}\ge \a_0, 
\end{equation}
or 
\begin{equation}\label{tau_low1:eq}
\a \ell\tau_{\textup{\tiny inf}}\ge \a_0, 
\end{equation}
with some $\a_0>0$, but may depend on $\a_0$. 

\begin{lem}\label{cross_smooth:lem} 
	Suppose that the domain $\L$ satisfies Condition 
	\ref{domain:cond}, and let 
	the functions $\tau$ and $v$ be as described above. Let $n$ 
	be as defined in \eqref{m:eq}.  
	Suppose that the symbol $a$ satisfies 
	\eqref{scales:eq}, the function $\varphi$
	satisfies \eqref{supporth:eq}, and that \eqref{tau_low1:eq} holds. 
	Then  for any $q\in (0, 1]$ we have 
	\begin{equation}\label{between_scale:eq}
		\| \chi_\L \varphi \op_\a(a) 
		(\mathds{1}-\chi_\L)\|_{\GS_q}^q
		\le C_q (\a\ell)^{d-1} 
		\bigl(\SN^{(n)}( \varphi; \ell)\bigr)^q 
		V_{q, 1}(v, \tau). 
	\end{equation}
	Suppose that \eqref{tau_low:eq} is satisfied. Then 
	\begin{equation}\label{a_multi:eq}
		\| \chi_\L \op_\a(a) (\mathds{1}-\chi_\L)\|_{\GS_q}^q
		\le C_q \a^{d-1}  V_{q, 1}(v, \tau). 
	\end{equation}
	The bound is uniform in $\tau, v$ and $a$ in the sense 
	specified in Remark \ref{uniform:rem}. 
\end{lem}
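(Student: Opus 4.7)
The plan is to localize the multi-scale symbol $a$ along the covering $\{B_j\}$ introduced after \eqref{dve:eq}, apply Proposition \ref{cross_smooth:prop} to each localized piece, and then assemble the individual bounds by means of the $q$-triangle inequality, ultimately replacing the discrete sum that appears by the integral $V_{q,1}(v,\tau)$.

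First, I would set $a_j := a\phi_j$, where $\{\phi_j\}$ is the partition of unity subordinate to $\{B_j\}$ satisfying \eqref{partition:eq}. Each $a_j$ is supported in $B_j = B(\bxi_j,\tau_j)$, so it satisfies the support condition \eqref{supporta:eq} with $\tau=\tau_j$ and $\bmu=\bxi_j$. Combining \eqref{scales:eq} with the partition bound \eqref{partition:eq} and the comparability \eqref{dve:eq}, \eqref{w:eq}, I would verify that
\begin{equation*}
\SN^{(m)}(a_j;\tau_j)\le C v(\bxi_j),
\end{equation*}
with a constant independent of $j$. The scale hypothesis \eqref{tau_low1:eq} together with $\tau_j\ge\tau_{\textup{\tiny inf}}$ guarantees $\a\tau_j\ell\ge \a_0$, so Proposition \ref{cross_smooth:prop} applies termwise and gives
\begin{equation*}
\|\chi_\L\varphi\op_\a(a_j)(\mathds{1}-\chi_\L)\|_{\GS_q}^q
\le C_q(\a\tau_j\ell)^{d-1}\bigl(\SN^{(n)}(\varphi;\ell)\bigr)^q v(\bxi_j)^q.
\end{equation*}

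Next, writing $a=\sum_j a_j$ and invoking the $q$-triangle inequality \eqref{qtriangle:eq}, the factor $(\a\ell)^{d-1}(\SN^{(n)}(\varphi;\ell))^q$ comes out of the sum and one is left with controlling $\sum_j \tau_j^{d-1} v(\bxi_j)^q$. Here the comparability of $\tau$ and $v$ on each ball $B_j$ gives
\begin{equation*}
\tau_j^{d-1} v(\bxi_j)^q \le c\int_{B_j}\frac{v(\bxi)^q}{\tau(\bxi)}\,d\bxi,
\end{equation*}
and the finite intersection property of $\{B_j\}$ then yields $\sum_j\tau_j^{d-1}v(\bxi_j)^q\le C\, V_{q,1}(v,\tau)$, proving \eqref{between_scale:eq}.

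For \eqref{a_multi:eq} I would exploit Condition \ref{domain:cond}: since either $\L$ or $\R^d\setminus\L$ is bounded, choose $\varphi\in\plainC\infty_0(\R^d)$ equal to $1$ on that bounded set with support in a ball of radius $\ell_0$ depending only on $\L$. Then $\chi_\L\op_\a(a)(\mathds{1}-\chi_\L)=\chi_\L\varphi\op_\a(a)(\mathds{1}-\chi_\L)$ (taking adjoints if $\L^c$ is the bounded one, and using $\op_\a(a)^* = \op_\a(\bar a)$, which is again a multi-scale symbol), and \eqref{between_scale:eq} applied with this $\varphi$, after absorbing $\ell_0$ and $\SN^{(n)}(\varphi;\ell_0)$ into the constant, produces \eqref{a_multi:eq}. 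The main obstacle is Step~3, the conversion of the sum into the integral: one has to verify that the constants arising from the finite intersection property and from \eqref{dve:eq}, \eqref{w:eq} depend only on $\nu$ and $C_1,C_2$, which is precisely the uniformity required by Remark \ref{uniform:rem}.
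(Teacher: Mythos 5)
Your proof is correct and follows essentially the same route as the paper: localize $a$ along the covering $\{B_j\}$, apply Proposition \ref{cross_smooth:prop} termwise, assemble via the $q$-triangle inequality, and convert $\sum_j\tau_j^{d-1}v_j^q$ into $V_{q,1}(v,\tau)$ using \eqref{dve:eq}, \eqref{w:eq} and the finite intersection property. The only (immaterial) deviation is in deducing \eqref{a_multi:eq}: the paper covers the bounded set ($\L$ or its complement) by finitely many unit balls with an associated partition of unity, whereas you use a single cut-off supported in one large ball of radius $\ell_0(\L)$ together with the adjoint trick; both absorb the $\L$-dependent factors into the constant.
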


\begin{proof} 
Without loss of generality assume that $\SN^{(n)}(\varphi; \ell) = 1$.  
Let $m$ be as defined in \eqref{m:eq}. 
	Denote $v_j := v(\bxi_j)$, $\tau_j := \tau(\bxi_j)$ 
	and $B_j := B(\bxi_j, \tau_j)$, $j=1, 2, \dots$. Due to 
	\eqref{scales:eq} and \eqref{dve:eq}, \eqref{w:eq}, 
	the localised symbol $a_j  = a \phi_j$ is supported in the ball 
	$B(\bxi_j, \tau_j)$, and the bound holds:
	\begin{equation*}
		|\nabla_{\bxi}^k a_j(\bxi)|\le C_m \tau_j^{-k} v_j, \ \ k = 0, 1, 2, \dots, m,
	\end{equation*}
	so that $\SN^{(m)}(a_j; \tau_j)\le C v_j$, see \eqref{norm:eq}. 
	Since $\a\ell\tau_j\ge \a_0$, by Proposition \ref{cross_smooth:prop}, 
	we have for any $q\in (0, 1]$ that 
	\begin{equation*}
		\|\chi_\L  \varphi \op_\a(a_j)(\mathds{1}-\chi_\L)\|_{\GS_q}^q
		\le C_q (\a\ell\tau_j)^{d-1} v_j^{q},\ C_q = C_q(\a_0).
	\end{equation*}
	By the $q$-triangle inequality \eqref{qtriangle:eq} we can write
	\begin{align}\label{prom:eq}
	\| \chi_\L \varphi \op_\a(a) (\mathds{1}-\chi_\L)\|_{\GS_q}^q
	\le &\ \sum_{j}
	\| \chi_\L  \varphi
	\op_\a(a_j) (\mathds{1}-\chi_\L)\|_{\GS_q}^q\notag\\[0.2cm]
	\le &\ C_q (\a\ell)^{d-1} \sum_j  \tau_j^{d-1} v_j^{q}.
	\end{align}
	In view of \eqref{dve:eq} and \eqref{w:eq}, 
	\begin{equation*}
		\tau_j^{d-1} v_j^{q}
		\le C \underset{B_j}\int \tau(\bxi)^{-1} v(\bxi)^{q} d\bxi,
	\end{equation*}
	and hence the sum on the right-hand side of \eqref{prom:eq} is bounded by 
	\begin{equation*}
		C\sum_j  \int_{B_j}\tau(\bxi)^{-1} v(\bxi)^{q} d\bxi
		\le \tilde C \int \tau(\bxi)^{-1} v(\bxi)^{q} d\bxi.
	\end{equation*}
	At the last step we used the finite intersection property 
	of the covering $\{ B_j\}$. This leads to \eqref{between_scale:eq}.

	The bound \eqref{a_multi:eq} immediately 
	follows from \eqref{between_scale:eq} upon using a finite covering of $\L$ or $\R^d\setminus\L$ 
	by unit balls and an associated smooth partition of unity.
\end{proof}
  
Lemma \ref{cross_smooth:lem} leads to the 
following result. 

\begin{thm}\label{multi:thm} 
Suppose that $f$ satisfies Condition \ref{f:cond} 
with some $n\ge 2$ and $\g >0$, and the domain $\L$ satisfies 
Condition \ref{domain:cond}. Let $a$ be a real-valued symbol. Let 
the functions $a$ and $\tau$, $v$ be as in Lemma \ref{cross_smooth:lem}, 
and let  \eqref{tau_low:eq} be satisfied. Then for any $\s\in (0, 1],\ 
\s<\g$, and $q\in ((n-\s)^{-1}, 1]$ we have 
\begin{equation}\label{multi:eq}
\| D_\a(a, \L; f)\|_{\GS_q}^q\le C_q \a^{d-1} R^{q(\g-\s)}  \1 f\1_n^q 
\ V_{q\s, 1}(v, \tau),
\end{equation}
with a constant independent of $t_0$. 
Furthermore, if $t_0 = 0$ and $\|a\|_{\plainL\infty}\le 1$, then for any $\l >0$,
\begin{equation}\label{multi_mu:eq}
\| D_\a(\l a, \L; f)\|_{\GS_q}^q\le C_q \a^{d-1} \l^{q\g} \1 f\1_n^q 
\ V_{q\s, 1}(v, \tau).
\end{equation}
The above bounds are uniform in $\tau, v$ and $a$ in the sense 
specified in Remark \ref{uniform:rem}. Furthermore, the constants  
in \eqref{multi:eq} and \eqref{multi_mu:eq} are independent of $\a$, 
$R$, $\l$, but may depend on $\a_0$ in \eqref{tau_low:eq}.  
\end{thm}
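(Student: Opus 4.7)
The plan is to obtain both bounds by combining the abstract Schatten--von Neumann estimate of Proposition~\ref{Szego1:prop} with the uniform cross-term bound \eqref{a_multi:eq} of Lemma~\ref{cross_smooth:lem}. Take $A := \op_\a(a)$, which is bounded and self-adjoint since $a$ is real-valued and in $\plainL\infty$, and $P := \chi_\L$. For \eqref{multi:eq}, apply Proposition~\ref{Szego1:prop} with these $A$, $P$ and the prescribed $n, \s, q$; the hypothesis $q > (n-\s)^{-1}$ is exactly the one required by the proposition. This yields
\[
\|D_\a(a, \L; f)\|_{\GS_q}^q \le C \1 f\1_n^q R^{q(\g-\s)} \|\chi_\L \op_\a(a)(\mathds{1}-\chi_\L)\|_{\GS_{\s q}}^{\s q}.
\]
Since $\s q \in (0, 1]$, Lemma~\ref{cross_smooth:lem} applies with $\s q$ in place of $q$; under \eqref{tau_low:eq} it bounds the cross term on the right by $C \a^{d-1} V_{\s q, 1}(v, \tau)$. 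Multiplying the two estimates yields \eqref{multi:eq}. The uniformity in $\tau, v, a$ transfers directly from Lemma~\ref{cross_smooth:lem}, and the independence of the constant on $R$ is evident because the $R$-dependence is displayed explicitly.

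For \eqref{multi_mu:eq} I would employ the homogeneity trick used to derive \eqref{Ab:eq} from Proposition~\ref{Szego1:prop}. Set $f^{(\l)}(t) := \l^{-\g} f(\l t)$, so that $\1 f^{(\l)}\1_n = \1 f\1_n$, and deduce from $W_\a(\l a; \L) = \l W_\a(a; \L)$ together with the functional calculus the identity $D_\a(\l a, \L; f) = \l^\g D_\a(a, \L; f^{(\l)})$. Fix a cutoff $\psi \in \plainC\infty_0((-2, 2))$ with $\psi \equiv 1$ on $[-1, 1]$; since $\|a\|_{\plainL\infty}\le 1$ forces $W_\a(a; \L)$ to have spectrum in $[-1, 1]$ and $a$ to take values in $[-1, 1]$, one has $D_\a(a, \L; f^{(\l)}) = D_\a(a, \L; f^{(\l)}\psi)$. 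The truncated function satisfies Condition~\ref{f:cond} with $t_0 = 0$ and $R = 2$, and $\1 f^{(\l)}\psi\1_n \le C\1 f\1_n$ by the Leibniz rule, so applying \eqref{multi:eq} to it and reintroducing the factor $\l^{q\g}$ yields \eqref{multi_mu:eq} with a constant that absorbs the now-harmless $2^{q(\g-\s)}$.

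The main (mild) obstacle is bookkeeping: verifying that $\s q$ lies in the range $(0, 1]$ needed by Lemma~\ref{cross_smooth:lem} (immediate from $\s, q \in (0, 1]$), that the constraint $q > (n-\s)^{-1}$ matches the hypothesis of Proposition~\ref{Szego1:prop} precisely, and that the rescaling step correctly produces homogeneity degree $\g$ in $\l$ (the combination of the external $\l^\g$ from the scaling identity with the fact that $R = 2$ is now independent of $\l$ is what buys the correct exponent, as opposed to the naive $\l^\s$ one would get by absorbing $\l$ into the amplitude $v$). Once these indices and support reductions are checked, the theorem is essentially a packaging of the two earlier estimates.
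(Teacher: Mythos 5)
Your proposal is correct and follows essentially the same route as the paper: the first bound is Proposition \ref{Szego1:prop} applied with $P=\chi_\L$, $A=\op_\a(a)$ combined with the cross-term estimate \eqref{a_multi:eq}, and the second bound is obtained via the homogeneity trick $f^{(\l)}(t)=\l^{-\g}f(\l t)$, which is precisely how the paper derives \eqref{Ab:eq} and then \eqref{multi_mu:eq}. The index bookkeeping ($\s q\in(0,1]$, $q>(n-\s)^{-1}$, the $\l$-independent $R=2$) is handled correctly.
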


\begin{proof}  
Use Proposition \ref{Szego1:prop} with $P = \chi_{\L}, A = \op_{\a}(a)$  
to get 
	\begin{eqnarray*}
\| D_\a(a, \L; f)\|_{\GS_q}^q&\le
&\big\|f(\chi_\L \op_{\a}(a) \chi_\L) \chi_\L - \chi_\L \op_{\a}(f\circ a)\big\|_{\GS_q}^q
\\
&\le& C_q \1 f\1_n^q R^{q(\g-\s)} \big\|\chi_\L \op_\a(a) (\mathds{1}-\chi_\L)\big\|_{\GS_{q\s}}^{q\s}. 
	\end{eqnarray*}
	To get \eqref{multi:eq} it remains to apply \eqref{a_multi:eq}. 
	The bound \eqref{multi_mu:eq} follows from 
	\eqref{a_multi:eq} and 
	\eqref{Ab:eq}. 
\end{proof}

We also state separately the estimate for the function \eqref{eta:eq}:

\begin{thm} \label{eta1:thm}
Let the function $\eta$ be as defined in \eqref{eta:eq}. Suppose 
that the real-valued symbol $a$ is as in Lemma \ref{cross_smooth:lem} 
with $\|a\|_{\plainL\infty}\le 1$ and that \eqref{tau_low:eq} is satisfied. 
Then for any $\l >0$ and any $q\in (0, 1]$, $\s\in (0, 1)$ one has 
\begin{equation}\label{multi_eta1:eq}
\| D_\a(\l a, \L; \eta)\|_{\GS_q}^q\le C_{q, \s} \a^{d-1} \l^{q}  
\ V_{q\s, 1}(v, \tau).
\end{equation}
The bound is uniform in $\tau, v$ and $a$ in the sense 
specified in Remark \ref{uniform:rem}. Furthermore, the constant 
in \eqref{multi_eta1:eq} is 
independent of $\a$, but may depend on $\a_0$ in \eqref{tau_low:eq}.  
\end{thm}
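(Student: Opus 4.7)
The plan is to mimic the proof of Theorem \ref{multi:thm}, replacing Proposition \ref{Szego1:prop} by its counterpart Corollary \ref{Ab_eta1:cor}, which is the abstract estimate tailored to the non-compactly-supported function $\eta$. The assumption $\|a\|_{\plainL\infty}\le 1$ supplies the hypothesis $\|A\|\le 1$ required by the corollary, for the operator $A:=\op_\a(a)$. Setting $P=\chi_\L$, the two hypotheses of Corollary \ref{Ab_eta1:cor} will be met provided we can control $\|PA(\mathds{1}-P)\|_{\GS_{\s q}}$, and this is precisely the content of Lemma \ref{cross_smooth:lem}.

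Concretely, since $\eta(0)=0$, the operator $\eta(\l PAP)$ vanishes on $\ker P$, so
$$D_\a(\l a, \L; \eta) = \bigl[\eta(\l PAP)P - P\op_\a(\eta\circ(\l a))\bigr]P,$$
and its $\GS_q$-quasinorm is dominated by that of the bracketed operator (using that $\|P\|=1$ and that multiplication by a projection does not increase singular values). Applying Corollary \ref{Ab_eta1:cor} in the same manner in which Proposition \ref{Szego1:prop} is invoked in the proof of Theorem \ref{multi:thm} yields
$$\|D_\a(\l a, \L; \eta)\|_{\GS_q} \le C_\s\,\l\,\|\chi_\L\op_\a(a)(\mathds{1}-\chi_\L)\|_{\GS_{\s q}}^{\s}.$$

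To finish, I would raise this inequality to the $q$-th power and insert the off-diagonal estimate \eqref{a_multi:eq} from Lemma \ref{cross_smooth:lem}, applied with the exponent $\s q\in(0,1]$ in place of $q$. Under \eqref{tau_low:eq} this gives
$$\|\chi_\L\op_\a(a)(\mathds{1}-\chi_\L)\|_{\GS_{\s q}}^{\s q} \le C_{\s q}\,\a^{d-1}\,V_{\s q, 1}(v, \tau),$$
and multiplying the two bounds produces \eqref{multi_eta1:eq}. The uniformity in $\tau, v, a$ in the sense of Remark \ref{uniform:rem}, and the independence of the constant from $\a$ and $\l$, propagate directly from the corresponding properties of Lemma \ref{cross_smooth:lem} and Corollary \ref{Ab_eta1:cor}. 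I do not anticipate any substantive new obstacle beyond the proof of Theorem \ref{multi:thm}: the only new ingredient is the replacement of the proposition by its entropy-specific corollary, which was designed precisely because $\eta$ cannot be forced into Condition \ref{f:cond} with finite $R$, and whose homogeneous $\l$-dependence is exactly what yields the power $\l^q$ on the right-hand side of \eqref{multi_eta1:eq}.
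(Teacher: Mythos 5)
Your proposal is correct and follows exactly the route the paper indicates: the paper's proof of Theorem \ref{eta1:thm} is stated as being identical to that of \eqref{multi_mu:eq} in Theorem \ref{multi:thm}, with \eqref{Ab:eq} replaced by \eqref{Ab_eta1:eq} and then combined with the off-diagonal bound \eqref{a_multi:eq} applied with exponent $\s q$. No discrepancies to report.
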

 
The proof is similar to that of \eqref{multi_mu:eq}, but instead of \eqref{Ab:eq}  
one uses \eqref{Ab_eta1:eq}. 

\section{Asymptotic results for the one-dimensional case}\label{sect:asympt}

\subsection{Results for smooth functions} 
Now we focus on the asymptotic behaviour of 
the trace of $D_\a(a, \L; f)$ as $\a\to\infty$ for dimension $d=1$. 
In line with the general theme of the paper we put the emphasis on 
non-smooth functions $f$. Our starting point, however, is the asymptotic formula 
for smooth $f$. This type of asymptotics was studied in 
\cite{Widom_82} and later in \cite{Peller}, 
and we use one result from \cite{Widom_82} without proof. 
Conditions on the smoothness and decay of the symbol $a$ 
imposed in \cite{Widom_82} are quite mild, but we assume stronger restrictions  
that enable us to utilize the bounds derived in Section \ref{estim:sect}.
More precisely, we impose the following condition. 

\begin{cond}\label{a:cond}
The symbol $a\in\plainC{m}(\R)$, $m\ge 1$ is assumed to satisfy the bound 
\eqref{scales:eq} for all derivatives up to the order $m$, 
with some continuous positive functions $\tau$ and $v$  
satisfying \eqref{Lip:eq} and \eqref{w:eq} for all $\xi\in\R$, respectively. 
\end{cond}

To state the result we first define asymptotic coefficients. For any function 
$g: \mathbb C\to\mathbb C$ and any $s_1, s_2\in\mathbb C$ denote
\begin{equation}\label{U:eq}
U(s_1, s_2; g) 
:= \int_0^1 \frac{g\bigl((1-t)s_1 + t s_2\bigr) 
	- [(1-t)g(s_1) + t g(s_2)]}{t(1-t)} dt.
\end{equation}
This integral is  
finite for functions $g\in \plainC{0, \varkappa}(\mathbb C)$,  
$\varkappa\in (0, 1]$. Note also that 
\begin{equation}\label{U_sym:eq}
U(s_1, s_1; g) = 0,\ \ \ 
U(s_1, s_2; g) = U(s_2, s_1; g), \forall s_1, s_2\in \mathbb C.
\end{equation}
Note also that the integral equals zero 
if $g(t) = 1$ or $g(t) = t$. 
Now we define the asymptotic coefficient 
\begin{equation}\label{cb:eq}
	\CB(a; g) := \frac{1}{8\pi^2}\lim_{\varepsilon\downarrow 0}
	\underset{|\xi_1-\xi_2|>\varepsilon}\iint  
	\frac{U\bigl(a(\xi_1), a(\xi_2); g\bigr)}{|\xi_1-\xi_2|^2}
	d\xi_1 d\xi_2.
\end{equation} 
Note that  $\CB$ 
is invariant under the change $a(\xi) \to a(\tau \xi)$ 
with an arbitrary $\tau >0$. If $g$ is such that $g''\in\plainL\infty(\mathbb C)$, then 
the principal value integral can be replaced by the double integral, and the following bound holds:
\begin{equation*}
|\CB(a; g)|\le C \|g''\|_{\plainL\infty} \iint 
\frac{|a(\xi_1) - a(\xi_2)|^2}{|\xi_1 - \xi_2|^2}
d\xi_1 d\xi_2.
\end{equation*}
This estimate was first pointed out in \cite[(17)]{Widom_82}. As shown in \cite{Sob_16}, 
under Condition \ref{a:cond}, one has
\begin{equation}\label{twice:eq}
|\CB(a; g)|\le C \|g''\|_{\plainL\infty}V_{2, 1}(v, \tau),
\end{equation} 
where the coefficient $V_{\s, m}$ for $\s>0,m\in\mathbb Z,$ is defined in \eqref{Vsigma:eq}.

\begin{prop}\label{Widom_82:prop} 
See \cite[Theorem 1(a)]{Widom_82}. Suppose that Condition \ref{a:cond} is satisfied with 
$m\ge 2$ and that $V_{2, 1}(v, \tau)<\infty$. 
\begin{enumerate}
\item 
Let $g$ be analytic on a neighbourhood of the closed convex hull of 
the range of the function $a$. Then the operator $D_1(a; \R_{\pm}; g)$ 
is trace class and 
\begin{equation}\label{Widom_82:eq}
\tr D_1(a; \R_{\pm}; g) = \CB(a; g).
\end{equation}
\item
If the symbol $a$ is real-valued, then formula \eqref{Widom_82:eq} holds 
under the condition $g\in \plainC{4}_0(\R)$. 
\end{enumerate}
\end{prop}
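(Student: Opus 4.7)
The plan is to follow the strategy of \cite{Widom_82}: use suitable spectral representations (Cauchy's formula for part (1), Helffer--Sj\"ostrand for part (2)) to reduce to the resolvent function $r_z$, establish an explicit formula for $\tr D_1(a;\R_\pm;r_z)$, and then integrate against $g$. The trace class property comes from the off-diagonal Schatten bounds already at our disposal from Section \ref{estim:sect}.

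For part (1), let $\Gamma$ be a smooth simple closed contour in the region of analyticity of $g$ enclosing the closed convex hull of $a(\R)$. Cauchy's integral formula gives $g(\lambda) = -\frac{1}{2\pi i}\oint_\Gamma g(z)\,r_z(\lambda)\,dz$ for $\lambda$ inside $\Gamma$. Feeding this once into \eqref{Dalpha:eq} (with $\a=1$) and once into \eqref{cb:eq} yields
\[
D_1(a;\R_\pm;g) = -\frac{1}{2\pi i}\oint_\Gamma g(z)\,D_1(a;\R_\pm;r_z)\,dz, \qquad
\CB(a;g) = -\frac{1}{2\pi i}\oint_\Gamma g(z)\,\CB(a;r_z)\,dz,
\]
the second identity being justified by the continuity estimate \eqref{twice:eq} applied to $r_z$. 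Thus everything reduces to proving (a) that $D_1(a;\R_\pm;r_z)$ is trace class with $\|D_1(a;\R_\pm;r_z)\|_{\GS_1}$ uniformly bounded on $\Gamma$, and (b) the identity $\tr D_1(a;\R_\pm;r_z) = \CB(a;r_z)$ for $z$ at positive distance from $a(\R)$. Statement (a) follows by rearranging the second resolvent identity into
\[
r_z(W_1(a;\R_\pm)) - \chi_\pm \op_1(r_z\circ a)\chi_\pm = (W_1(a;\R_\pm)-z)^{-1}\,\chi_\pm \op_1(a-z)(\mathds 1-\chi_\pm)\op_1((a-z)^{-1})\chi_\pm,
\]
and then invoking Lemma \ref{cross_smooth:lem} (with $q=1,d=1$) on the off-diagonal factor $\chi_\pm\op_1(a-z)(\mathds 1-\chi_\pm)$, whose seminorms are controlled by $V_{1,1}(v,\tau)$ uniformly in $z\in\Gamma$. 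For statement (b) one writes the trace of this factorized operator as a double integral of the Schwartz kernels of $\op_1(a-z)$ and $\op_1((a-z)^{-1})$ over $\R_+\times\R_-$, and reorganizes the resulting Fourier integral --- after a principal-value regularization near the diagonal $\xi_1=\xi_2$ --- into the form \eqref{cb:eq} with $g=r_z$.

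For part (2), with real-valued $a$ and $g\in\plainC{4}_0(\R)$, I would invoke the Helffer--Sj\"ostrand representation (Appendix A)
\[
g(A) = \frac{1}{\pi}\int_{\mathbb C}\bar\partial \tilde g(z)\,r_z(A)\,dA(z)
\]
where $\tilde g$ is an almost analytic extension of $g$ with compact support and $|\bar\partial \tilde g(z)|\le C|\Im z|^3$. Applying this to $A = W_1(a;\R_\pm)$ and pointwise to the real symbol $a$, one obtains
\[
D_1(a;\R_\pm;g) = \frac{1}{\pi}\int_{\mathbb C}\bar\partial\tilde g(z)\,D_1(a;\R_\pm;r_z)\,dA(z).
\]
Refining the bound from part (1) to the quantitative form $\|D_1(a;\R_\pm;r_z)\|_{\GS_1}\le C|\Im z|^{-2}$ --- which follows by tracking the $z$-dependence in Lemma \ref{cross_smooth:lem}, since the symbol $(a-z)^{-1}$ and its derivatives carry factors of $|\Im z|^{-1}$ --- makes the integrand $\GS_1$-integrable, thanks to the extra vanishing of $\bar\partial\tilde g$. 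Taking the trace and substituting the identity from part (1) gives
\[
\tr D_1(a;\R_\pm;g) = \frac{1}{\pi}\int_{\mathbb C}\bar\partial\tilde g(z)\,\CB(a;r_z)\,dA(z) = \CB(a;g),
\]
the last step being the Helffer--Sj\"ostrand formula applied to the bounded linear functional $g\mapsto\CB(a;g)$ on $\plainC{4}_0(\R)$.

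\textbf{Main obstacle.} The hardest step is (b): the explicit identification of $\tr D_1(a;\R_\pm;r_z)$ with the principal-value double integral defining $\CB(a;r_z)$. The Schwartz-kernel computation produces an integral that is only conditionally convergent on the diagonal $\xi_1=\xi_2$, so a careful regularization and passage to the principal-value form of \eqref{cb:eq} is required. A secondary technical issue is extracting the sharp $z$-dependence $\|D_1(a;\R_\pm;r_z)\|_{\GS_1}\lesssim |\Im z|^{-2}$ needed for the Helffer--Sj\"ostrand step, but this is a direct (if careful) consequence of inserting the bounds $|\nabla^k_\xi (a-z)^{-1}|\lesssim |\Im z|^{-1}\tau(\xi)^{-k}v(\xi)/|\Im z|$ into Lemma \ref{cross_smooth:lem}.
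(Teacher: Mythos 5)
The paper does not prove this proposition at all: it is imported verbatim from Widom's 1982 paper (``we use one result from \cite{Widom_82} without proof''), so there is no internal argument to compare yours against. Your reduction scheme --- Cauchy's formula for analytic $g$, Helffer--Sj\"ostrand for $g\in\plainC{4}_0(\R)$ and real $a$, plus the factorization
$D_1(a;\R_\pm;r_z)=(W_1(a;\R_\pm)-z)^{-1}\chi_{\R_\pm}\op_1(a_z)(\mathds 1-\chi_{\R_\pm})\op_1(a_z^{-1})\chi_{\R_\pm}$
for the trace-class property and the $|\im z|^{-2}$ bound --- is correct and is exactly the kind of bookkeeping the paper itself performs elsewhere (Section \ref{proofs:sect}). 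But this part is the routine shell of the theorem, not its substance.

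The genuine gap is your step (b), and it is not merely ``left to the reader'': as described it would compute the wrong quantity. The trace you need is that of a product of \emph{three} operators, the leftmost being $(W_1(a;\R_\pm)-z)^{-1}$, the inverse of a truncated Wiener--Hopf operator, which has no explicit Schwartz kernel. ``A double integral of the Schwartz kernels of $\op_1(a-z)$ and $\op_1((a-z)^{-1})$ over $\R_+\times\R_-$'' is the trace of the Hankel-type product $H_1(a_z,a_z^{-1};\R_\pm)$ from \eqref{halpha:eq}; that computation produces the \emph{quadratic} functional $\iint|\xi_1-\xi_2|^{-2}(a(\xi_1)-a(\xi_2))(a_z(\xi_1)^{-1}-a_z(\xi_2)^{-1})\,d\xi_1 d\xi_2$ (up to constants), not the fully nonlinear coefficient $\CB(a;r_z)$ built from $U(\cdot,\cdot;r_z)$ in \eqref{cb:eq}. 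To recover the latter one must deal with the resolvent prefactor --- e.g.\ via the Neumann expansion of $(\mathds 1-H)^{-1}$ as in \eqref{W_rep:eq}, computing $\tr\bigl[W_1(a_z^{-1};\R_\pm)H^{n}\bigr]$ for every $n$ and resumming, or via a Wiener--Hopf factorization of $a_z$. That resummation, together with the algebraic identity that collapses the resulting series into $U(s_1,s_2;r_z)$, is the entire content of Widom's Theorem 1(a). Your proposal neither carries it out nor correctly identifies what has to be carried out, so as a proof it does not close; as a reduction to the literature it coincides with what the paper does, namely cite \cite{Widom_82}.
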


Formula \eqref{Widom_82:eq} was obtained in \cite{Widom_82} 
under weaker conditions on the symbol $a$. Moreover, for real-valued 
symbols $a$ the smoothness conditions on $g$ 
in \cite{Widom_82} are less restrictive than in the above proposition.
Note also that for real-valued $a$ the paper \cite{Peller} 
allows further relaxation on the functions $a$ and $g$ but we omit the details.

\medskip
By rescaling $ x\to \a x$  
one immediately concludes that the left-hand side 
of \eqref{Widom_82:eq} coincides with $\tr D_\a(a; \R_{\pm}; g)$. 
It is worth pointing out that, formally speaking, the estimates in Section 
\ref{estim:sect} do not ensure that the 
trace on the left-hand side of \eqref{Widom_82:eq} 
is finite, since neither $\R_\pm$ itself nor its complement is bounded. 
However, those estimates in combination with Proposition \ref{separate:prop} below do guarantee 
that $D_1(a; \R_{\pm}; g)$ is trace-class.

We apply Proposition \ref{Widom_82:prop} to the case of a real-valued 
symbol $a$ and the function $g:\R\mapsto \mathbb C$ defined as
\begin{equation*}
g(\l) := r_z(\l) := \frac{1}{\l-z},\  \im z\not = 0.
\end{equation*}  
Now our immediate objective is to derive from 
\eqref{Widom_82:eq} a similar asymptotic 
formula for the operator $D_\a(a; \L; g)$ with 
a set $\L$ satisfying Condition \ref{domain:cond}(1).  
For $d=1$, instead of $\L$ we use the notation $I$. According  
to Condition \ref{domain:cond}(1),
\begin{equation}\label{Kint:eq}
I = I_0\cup I_{K+1}\underset{k=1}{\overset{K}\bigcup} I_k 
\end{equation}
where $\{I_k\}, k = 1, 2, \dots, K$ is a finite 
collection of bounded open intervals such that 
their closures are disjoint, the set $I_0$ (resp.~$I_{K+1}$) 
is either empty or $(-\infty, x_0)$ (resp.~$(x_0, \infty)$) 
with some $x_0\in\R$, and its closure is also disjoint from the 
other intervals. Below we use the following notation
for the number of endpoints of $I$, namely
\begin{equation}\label{omega:eq}
\omega := 
\begin{cases}
2K\ \textup{if } I_0 = I_{K+1} = \varnothing,\\
2K+1\ \textup{if only one of $I_0, I_{K+1}$ is non-empty},\\
2K+2 \ \textup{if both $I_0, I_{K+1}$ are non-empty}. 
\end{cases}
\end{equation}
By writing $K=K(I)$ and $\om=\om(I)$ we emphasize the dependence on 
the set $I$. We observe that 
\begin{equation}\label{compl:eq}
\om(I) = \om(I^c),\ \textup{with}\ I^c = \R\setminus I.
\end{equation}

For arbitrary symbols $a, b$ we introduce the notation 
\begin{equation}\label{M:eq}
M^{(m)}(a, b) := \|\p_\xi^m a\|_{\plainL1} \|b\|_{\plainL\infty} 
+ \|a\|_{\plainL\infty}\|\p_\xi^m b\|_{\plainL1},\ m= 1, 2, \dots,
\end{equation}
and denote 
\begin{equation}\label{distz:eq}
\d(z, a) := \dist(z, [-\|a\|_{\plainL\infty}, \|a\|_{\plainL\infty}]) >0.
\end{equation}

\begin{thm}\label{interval_res:thm}
Let $I$ and $\omega$ be as described in \eqref{Kint:eq} and \eqref{omega:eq}. Assume that 
\begin{equation}\label{mult_int:eq}
\inf_{k, j: k\not = j} 
\{|I_k|, \dist(I_k, I_j)\}\ge  1, \ \ k, j = 0, 1, 2, \dots, K+1.
\end{equation}
Suppose that $a\in\plainC{m}(\R), m\ge 3,$ is real-valued. Then for any $\a>0$ we have  
\begin{align}\label{int_res:eq}
\bigl|\tr D_\a(a, I; r_z) - &\ \omega\CB(a; r_z)\bigr|\notag\\[0.2cm]
\le  &\ C_m \a^{-m+1} \frac{1}{\d(z, a)}
\biggl(\frac{|z|+\|a\|_{\plainL\infty}}{\d(z, a)}\biggr)^3 M^{(m)}(a_z, a_z^{-1}),
\end{align}
with a constant $C_m$ independent of $a, z, \d(z, a)$, and $\a$, and the intervals 
$I_k$, $k = 0, 1, \dots$, $K+1$.
\end{thm}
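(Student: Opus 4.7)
The plan is to reduce the multi-interval case to a sum of half-line contributions handled by Proposition~\ref{Widom_82:prop}(1), which is applicable since $r_z$ is analytic in a neighborhood of $[-\|a\|_{\plainL\infty},\|a\|_{\plainL\infty}]$ whenever $z$ lies outside that interval. The remaining cross-endpoint interactions are then controlled via integration by parts on the Schwartz kernels of the pseudodifferential operators involved.

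First I would invoke the unitary dilation $x\mapsto\a x$, which conjugates $D_\a(a,I;r_z)$ to $D_1(a,\a I;r_z)$, so that one may work at scale $\a=1$ with rescaled intervals whose lengths and pairwise distances are at least $\a$ by~\eqref{mult_int:eq}. Let $e_1,\dots,e_\omega$ denote the endpoints of $\a I$ and build a smooth partition of unity $\{\eta_j\}_{j=0}^\omega$ on $\R$ such that $\supp\eta_j\subset(e_j-\a/4,e_j+\a/4)$ for $j\ge 1$ (so these supports are pairwise disjoint by the separation hypothesis), $\eta_0$ is supported at distance $\ge \a/8$ from every endpoint, and $|\p^k\eta_j|\le C_k\a^{-k}$. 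For each $j\ge 1$, let $H_j$ be the half-line with endpoint $e_j$ chosen so that $\chi_{\a I}$ agrees with $\chi_{H_j}$ on $\supp\eta_j$. Splitting $\mathds 1=\sum_j\eta_j$ inside the trace and replacing $\chi_{\a I}$ by $\chi_{H_j}$ wherever possible yields
\begin{equation*}
\tr D_1(a,\a I;r_z)=\sum_{j=1}^\omega \tr\bigl(\eta_j^2\, D_1(a,H_j;r_z)\bigr)+\mathcal E(\a),
\end{equation*}
where $\mathcal E(\a)$ collects the cross-support contributions, the $\eta_0$-term, and the errors caused by the local replacement of $\chi_{\a I}$ by $\chi_{H_j}$. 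A cyclic-trace manipulation combined with another cross-term estimate then converts each $\tr(\eta_j^2 D_1(a,H_j;r_z))$ into $\tr D_1(a,H_j;r_z)$, which by Proposition~\ref{Widom_82:prop}(1) and translation invariance of $\op_1(a)$ equals $\CB(a;r_z)$, producing the main term $\omega\CB(a;r_z)$.

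For the remainder $\mathcal E(\a)$ I would use that for $\xi$-only symbols $\op_1(a_z^{-1})$ is the exact inverse of $\op_1(a_z)=\op_1(a)-z$. Iterating the resolvent identity three times expresses $(W_1-z)^{-1}$ as a finite sum of terms built from $\op_1(a_z^{-1})$, $\chi_{\a I}$, the off-diagonal commutator block $\chi_{\a I}\op_1(a)(\mathds 1-\chi_{\a I})$, and a residual term carrying three resolvent factors. Integration by parts $m$ times in the Fourier representation of $\op_1(a_z^{\pm 1})$ yields kernels satisfying $|K_\pm(x,y)|\le C_m|x-y|^{-m}\|\p_\xi^m a_z^{\pm 1}\|_{\plainL 1}$, and combining these bounds in the two directions produces exactly the prefactor $M^{(m)}(a_z,a_z^{-1})$. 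Because distinct $\supp\eta_j$ (and $\supp\eta_0$ together with any endpoint) are separated by $\ge\a/8$, the cross-region kernels decay like $\a^{-m}$, and a one-dimensional diagonal integration in the trace gives the announced $\a^{-m+1}$. The main obstacle is the bookkeeping of the explicit $z$-dependence: each of the three resolvent iterations contributes a factor $(|z|+\|a\|_{\plainL\infty})/\d(z,a)$ (from the operator-norm bound $\|(\op_1(a)-z)^{-1}\|\le\d(z,a)^{-1}$ combined with an $\op_1(a)$-factor from the commutator), yielding the cubic power, and one further $\d(z,a)^{-1}$ comes from the outermost resolvent. Matching precisely the norm $M^{(m)}(a_z,a_z^{-1})$, rather than a looser mixed norm, requires careful combinatorial tracking of the multi-linear expansions.
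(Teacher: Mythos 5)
Your overall strategy is the right one and matches the paper's in spirit: reduce to half-lines, evaluate each half-line contribution exactly by Proposition~\ref{Widom_82:prop}(1) and translation invariance, and control the cross-endpoint errors through the $|x-y|^{-m}$ kernel decay of $\op_1(a_z^{\pm1})$ (which is exactly what Proposition~\ref{separate:prop} packages as a trace-norm bound). You also correctly identify the structural facts that drive the $z$-dependence. However, the two steps where all the difficulty is concentrated are asserted rather than proved, and as described they do not go through. First, ``replacing $\chi_{\a I}$ by $\chi_{H_j}$ wherever possible'' inside $(W_1(a;\a I)-z)^{-1}$ is a nonlocal operation: the resolvent of the truncated operator is not locally determined by $\chi_{\a I}$ near $e_j$, so the errors from this replacement cannot be read off from kernel supports. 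The paper handles this with the exact Wiener--Hopf identity
\begin{equation*}
\bigl(W_\a(a;I)-z\bigr)^{-1}\chi_I - W_\a(a_z^{-1};I)
= W_\a(a_z^{-1};I)\,H_\a(a_z,a_z^{-1};I)\,\bigl[\mathds{1}-H_\a(a_z,a_z^{-1};I)\bigr]^{-1},
\qquad H_\a(a,b;I)=\chi_I\op_\a(a)(\mathds{1}-\chi_I)\op_\a(b)\chi_I,
\end{equation*}
together with the a priori bound $\|(\mathds{1}-H_\a)^{-1}\|\le(|z|+\|a\|_{\plainL\infty})/\d(z,a)$; the comparison between the interval and its two half-lines is then carried out at the level of the $H_\a$'s and propagated through the inverse by the resolvent identity (this is where the cube of $(|z|+\|a\|_{\plainL\infty})/\d(z,a)$ actually comes from). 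Your substitute --- ``iterating the resolvent identity three times'' --- leaves a remainder containing the full resolvent sandwiched between off-diagonal blocks, and bounding that remainder in $\GS_1$ with the correct $\a^{-m+1}$ and $z$-powers is precisely the problem you set out to solve; without the displayed identity (or an equivalent), the argument is circular.

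Second, the reduction of $\tr\bigl(\eta_j^2\,D_1(a,H_j;r_z)\bigr)$ to $\tr D_1(a,H_j;r_z)$ is not a cyclicity exercise: it requires showing that the trace-class content of the half-line operator $D_1(a,H_j;r_z)$ is concentrated within a bounded distance of the endpoint $e_j$, with a quantitative $O(\a^{-m+1})$ tail at distance $\gtrsim\a$. That again needs the identity above plus Proposition~\ref{separate:prop}-type factorizations (pointwise kernel decay alone does not control a trace norm). The paper sidesteps both issues by never introducing a partition of unity in $x$: it proves directly that $D_\a(a,I;r_z)-D_\a(a,\R_{y_0}^{(-)};r_z)-D_\a(a,\R_{x_0}^{(+)};r_z)$ (and, for several intervals, $D_\a(a,I;r_z)-\sum_k D_\a(a,I_k;r_z)$) is small in $\GS_1$ as an operator identity (Lemmas~\ref{Z:lem}--\ref{red_resolv:lem} and Theorem~\ref{addit:thm}). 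To repair your proof you would need to supply the factorization identity and the two concentration estimates explicitly; once you do, you will essentially have reconstructed the paper's argument.
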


Clearly, by scaling we may replace the $1$ on the right-hand side in condition 
\eqref{mult_int:eq} by any (strictly) positive real number.

In the case of one bounded interval, the convergence of 
the left-hand side of \eqref{int_res:eq} 
to zero as $\a\to\infty$ was proved 
in \cite[Theorem 2]{Widom_82}, 
see also \cite[Theorem 9]{Peller}. 
Note that for an infinitely smooth $a$ the right-hand 
side of \eqref{int_res:eq} decays as 
$\a^{-\infty}, \a\to\infty$. 
For one bounded interval, this effect was pointed out in \cite[formula (1.5)]{BuBu}. 
These conclusions of \cite{BuBu, Peller, Widom_82} 
are not sufficient for us, as our 
aim is to have a more explicit control of 
the remainder as a function of the symbol $a$ as in Theorem \ref{interval_res:thm}. 
In particular, when considering symbols $a = a_{T,\mu}$ defined in 
\eqref{positiveT:eq}, Theorem \ref{interval_res:thm} allows us to obtain 
estimates that depend explicitly on the temperature $T$, and possibly the chemical potential $\mu$.  
The proof of Theorem \ref{interval_res:thm} draws on the ideas of \cite{Widom_82}, and 
it is postponed until Section \ref{res1:sect}.

We extend the above bound to arbitrary functions of finite smoothness satisfying 
some decay conditions. Precisely, for $g\in \plainC{n}(\R)$, $n\in\mathbb N_0$ and a  
constant $r >0$ we define 
\begin{equation}\label{Nn:eq}
	N_{n}(g)  := N_n(g; r) := \sum_{k=0}^n\int |g^{(k)}(t)| \lu t\ru_r^{k-2} dt,\ 
	\lu t\ru_r := \sqrt{t^2 + r^2}.
	\end{equation}
 
\begin{thm}\label{interval_fn:thm}
Let $I$ and $\omega$ be as in the previous theorem. Suppose that $a\in \plainC{m}(\R), m\ge 3,$ 
is real-valued and satisfies the bound \eqref{scales:eq} with some continuous positive functions $\tau, v$.  Suppose further that $f\in \plainC{n}_0(\R)$ with $n\ge m+6$. Then 
for any $r\ge \|v\|_{\plainL\infty}$ and any $\a>0$ we have 
\begin{equation}\label{fn:eq}
	\bigl|\tr D_\a(a, I; f) - \omega \CB(a; f)\bigr|
	\le  C_{m, n}  N_n(f; r)  
	\a^{-m+1}	V_{1, m}(v, \tau).
\end{equation}
This bound is uniform in $\tau, v$ and $a$ in the sense specified 
in Remark \ref{uniform:rem}. The constant $C_{m,n}$ in \eqref{fn:eq} is independent 
of the parameters $\a, r$ and the function $f$.
\end{thm}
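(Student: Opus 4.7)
\medskip
\noindent\textbf{Proof plan.}
The plan is to reduce the assertion to Theorem \ref{interval_res:thm} via the Helffer--Sj\"ostrand formula described in Appendix A. First I fix an almost analytic extension $\tilde f\in\plainC{n}(\mathbb C)$ of $f$ that is compactly supported in a horizontal strip of height comparable to $r$, and satisfies the standard pointwise bound
\begin{equation*}
|\bar\p\tilde f(z)|\le C\,|\im z|^{n-1}\sum_{k=0}^{n}|f^{(k)}(\re z)|\,\chi(|\im z|/r),
\end{equation*}
with a smooth cutoff $\chi$. For any bounded self-adjoint operator $A$ this yields $f(A)=\pi^{-1}\int_{\mathbb C}\bar\p\tilde f(z)(A-z)^{-1}\,dm(z)$, where $dm$ is planar Lebesgue measure. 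Applying this representation to both terms in $D_\a(a, I; f)$ gives
\begin{equation*}
D_\a(a, I; f)=\frac{1}{\pi}\int_{\mathbb C}\bar\p\tilde f(z)\,D_\a(a, I; r_z)\,dm(z).
\end{equation*}

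Next, taking the trace (justified by Theorem \ref{multi:thm} applied to $r_z$ together with dominated convergence) and using that $\CB(a;\cdot)$ is linear in its second argument by \eqref{U:eq} and \eqref{cb:eq}, one arrives at
\begin{equation*}
\tr D_\a(a, I; f)-\omega\CB(a; f)
=\frac{1}{\pi}\int_{\mathbb C}\bar\p\tilde f(z)\bigl[\tr D_\a(a, I; r_z)-\omega\CB(a; r_z)\bigr]\,dm(z).
\end{equation*}
Theorem \ref{interval_res:thm} then bounds the square bracket by $C_m\a^{-m+1}\d(z,a)^{-4}(|z|+\|a\|_{\plainL\infty})^3 M^{(m)}(a_z, a_z^{-1})$, with $a_z:=a-z$.

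The third step is to control $M^{(m)}(a_z, a_z^{-1})$ by $V_{1,m}(v,\tau)$. For the first summand in \eqref{M:eq} one uses $\p_\xi^j a_z=\p_\xi^j a$ for $j\ge1$ together with \eqref{scales:eq} to obtain $\|\p_\xi^m a\|_{\plainL 1}\le C V_{1,m}(v,\tau)$, while $\|a_z^{-1}\|_{\plainL\infty}\le\d(z,a)^{-1}$. For the second summand I apply Fa\`a di Bruno's formula to $\p_\xi^m a_z^{-1}$, yielding a finite sum of terms of the form $a_z^{-k-1}\prod \p_\xi^{j_i}a$ with $\sum j_i=m$; each such term is pointwise dominated by $C\,\d(z,a)^{-m-1}\,v(\xi)\,\tau(\xi)^{-m}$, so its $\plainL 1$-norm is bounded by $C\,\d(z,a)^{-m-1}V_{1,m}(v,\tau)$. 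Combined, one finds $M^{(m)}(a_z, a_z^{-1})\le C(|z|+\|a\|_{\plainL\infty}+1)^m\,\d(z,a)^{-m-1}V_{1,m}(v,\tau)$.

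Substituting this back and using the hypothesis $r\ge\|v\|_{\plainL\infty}$, which via \eqref{scales:eq} implies $\|a\|_{\plainL\infty}\le C r$, the polynomial factors in $|z|$ can be absorbed into $\lu\re z\ru_r$, and the $z$-integral reduces to a sum of terms of the form
\begin{equation*}
\sum_{k=0}^{n}\iint |f^{(k)}(t)|\,|s|^{n-1}\,\lu t\ru_r^{c_1}\,\d(t+is,a)^{-c_2}\,\chi(|s|/r)\,ds\,dt,
\end{equation*}
for integer exponents $c_1,c_2$ depending only on $m$. The hypothesis $n\ge m+6$ ensures $n-1-c_2\ge 0$, so the $s$-integration produces a finite factor of the order $r^{n-c_2}$; absorbing this into $\lu t\ru_r^{k-2}$ turns the $t$-integral into precisely $N_n(f;r)$ from \eqref{Nn:eq}, which yields the claimed bound $C_{m,n}\,N_n(f;r)\,\a^{-m+1}V_{1,m}(v,\tau)$. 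The main technical obstacle is this last bookkeeping: one must verify carefully that the powers of $\d(z,a)$ arising from Theorem \ref{interval_res:thm} and from differentiating $a_z^{-1}$ are exactly compensated by $|\im z|^{n-1}$ coming from $\bar\p\tilde f$ together with the polynomial decay $\lu t\ru_r^{k-2}$ built into $N_n$, and that all constants remain independent of $a$, $\tau$, $v$ in the sense of Remark \ref{uniform:rem}.
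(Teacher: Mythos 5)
Your strategy is exactly the paper's: Helffer--Sj\"ostrand representation, linearity of $\CB(a;\cdot)$, the resolvent bound of Theorem \ref{interval_res:thm}, and then an estimate of $M^{(m)}(a_z,a_z^{-1})$ in terms of $V_{1,m}(v,\tau)$ (the paper isolates this last step as Lemma \ref{az:lem}, proved via Leibniz exactly as in your Fa\`a di Bruno computation). So the reduction is correct and matches the paper.

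The one place where your write-up does not close is the final bookkeeping, which you yourself flag as the main obstacle. With the almost analytic extension you chose --- cut off in a flat horizontal strip $|\im z|\lesssim r$ and with the unweighted bound $|\bar\p\tilde f|\le C|\im z|^{n-1}\sum_k|f^{(k)}(\re z)|$ --- the powers of $\lu t\ru_r$ do not balance. After performing the $s$-integration one is left with terms of the form $\int |f^{(k)}(t)|\,\lu t\ru_r^{\,m+3}\,r^{\,n-m-5}\,dt$ (the factor $\lu t\ru_r^{\,m+3}$ coming from $(|z|+\|a\|_{\plainL\infty})^3$ and from $M^{(m)}(a_z,a_z^{-1})\lesssim \lu t\ru_r^{\,m}|y|^{-m-1}V_{1,m}$), and for small $k$ this cannot be dominated by $\lu t\ru_r^{\,k-2}$ with a constant independent of $f$ and $r$, as the theorem requires. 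The paper's Lemma \ref{ab:lem} is calibrated precisely to avoid this: the extension is supported in the region $|y|<\lu x\ru_r$ (not a strip), and its $\bar\p$-derivative is bounded by $F(x;r)\,|y|^{n-1}$ with the \emph{weighted} sum $F(x;r)=\sum_{l}|f^{(l)}(x)|\lu x\ru_r^{\,l-n}$. With these two modifications the $y$-integration over $|y|<\lu x\ru_r$ produces $\lu x\ru_r^{\,n-m-5}$, the total power of $\lu x\ru_r$ becomes exactly $n-2$, and $\int F(x;r)\lu x\ru_r^{\,n-2}dx=N_n(f;r)$ on the nose; this is also where the hypothesis $n\ge m+6$ enters, to make the exponent $n-m-6$ of $|y|$ nonnegative. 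So the argument is salvageable, but only after replacing your extension by the one of Lemma \ref{ab:lem}; as stated, your constants would depend on $r$ and on the support of $f$.
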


\subsection{Results for non-smooth functions}
Now we assume that $f$ satisfies Condition \ref{f:cond} with some $\g >0$. 
In this case, if $\gamma>0$ is small,  
it is not immediately clear why and under which conditions on the symbol $a$ 
the coefficient $\CB(a; f)$ is finite. This issue was investigated in 
\cite{Sob_16}. We quote the appropriate bound, adjusted for the use in the forthcoming 
calculations. We use the integral $V_{\s, \rho}(v, \tau)$ defined in \eqref{Vsigma:eq} and
the notation $\varkappa :=  \min\{1, \g\}$.

\begin{prop}\label{scales:prop} See \cite[Theorem 6.1]{Sob_16}.
Suppose that $f$ satisfies Condition \ref{f:cond} with $n = 2$, 
$\g>0$ and some $R>0$. 
Let $a\in\plainC\infty(\R)$ satisfy Condition \ref{a:cond}. 
Then for any $\s\in (0, \varkappa]$ we have 
\begin{align}\label{coeffscales:eq}
|\CB(a; f)|\le C_\s \1 f\1_2 R^{\g-\s} V_{\s, 1}(v, \tau),
\end{align}
with a constant $C_\s$ independent of $f$, 
uniformly in the functions $\tau, v$, 
and the symbol $a$ in the sense specified in Remark 
\ref{uniform:rem}. 
\end{prop}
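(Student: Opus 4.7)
The plan is to reduce the coefficient estimate to the trace-norm bound of Theorem \ref{multi:thm} via the identity
\[
\CB(a; g) = \tr D_1(a; \R_+; g),
\]
which Proposition \ref{Widom_82:prop} supplies for real-valued $a$ and sufficiently smooth $g$. Granting this, since $|\tr A|\le \|A\|_{\GS_1}$, the specialisation of Theorem \ref{multi:thm} to $d=q=\a=1$, $\L=\R_+$ immediately produces, for such $g$,
\[
|\CB(a; g)| \le \|D_1(a;\R_+; g)\|_{\GS_1}\le C_\s \1 g\1_n R^{\g-\s} V_{\s, 1}(v,\tau),
\]
which is exactly the form of \eqref{coeffscales:eq}. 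The remaining task is to transfer this bound from smooth $g$ to the non-smooth $f$ of Condition \ref{f:cond} by a controlled approximation.

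First I would construct cutoffs $\phi_\delta \in \plainC\infty(\R)$ equal to $1$ for $|t-t_0|\ge \delta$, supported in $|t-t_0|\ge \delta/2$, with $|\phi_\delta^{(k)}|\le C_k \delta^{-k}$, and set $f_\delta := f\phi_\delta$. A Leibniz calculation based on \eqref{fbound:eq} shows that both $f_\delta$ and $f-f_\delta$ satisfy Condition \ref{f:cond} with the same exponent $\g$ and with $\1 f_\delta\1_n, \1 f-f_\delta\1_n \le C\1 f\1_n$, while $f-f_\delta$ is supported in the $\delta$-neighbourhood of $t_0$. Since $f\in \plainC{n}(\R\setminus\{t_0\})$ and $\phi_\delta$ vanishes near $t_0$, $f_\delta \in \plainC{n}_0(\R)$; after, if necessary, a routine mollification to upgrade $f_\delta$ to $\plainC{4}_0(\R)$, Proposition \ref{Widom_82:prop}(2) yields $\CB(a; f_\delta) = \tr D_1(a; \R_+; f_\delta)$, whence Theorem \ref{multi:thm} delivers the displayed bound with a constant independent of $\delta$.

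To pass to the limit $\delta\downarrow 0$, I would first show that $\{\CB(a;f_\delta)\}$ is Cauchy: the differences $f_{\delta_1}-f_{\delta_2}$ are themselves smooth and satisfy Condition \ref{f:cond} with support radius $\max(\delta_1,\delta_2)$, so the same mechanism gives $|\CB(a;f_{\delta_1})-\CB(a;f_{\delta_2})|\le C_\s \1 f\1_n (\max(\delta_1,\delta_2))^{\g-\s} V_{\s,1}(v,\tau)$, which vanishes as $\delta_1,\delta_2\downarrow 0$ for any $\s\in (0,\min\{\varkappa,\g\})$. The main obstacle, and the step requiring the most care, is the identification of $\lim_{\delta\downarrow 0}\CB(a;f_\delta)$ with $\CB(a;f)$ interpreted as the principal value \eqref{cb:eq}. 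Here one exploits the vanishing $U(s,s;f)=0$ together with the Hölder bound \eqref{hol:eq} to control the integrand of \eqref{cb:eq} near the diagonal $\xi_1=\xi_2$, combined with the pointwise convergence $U(\cdot,\cdot;f_\delta)\to U(\cdot,\cdot;f)$ on the complement of the diagonal and dominated convergence. Once that identification is secured, the uniform estimate passes to the limit and delivers \eqref{coeffscales:eq}; the uniformity in $\tau, v, a$ of Remark \ref{uniform:rem} is inherited directly from Theorem \ref{multi:thm}.
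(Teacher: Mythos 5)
The paper does not prove this proposition at all: it is quoted verbatim from \cite[Theorem 6.1]{Sob_16}, where it is established by a direct analysis of the singular double integral \eqref{cb:eq} (splitting according to the scale $\tau$, the distance to the diagonal, and the distance of $a(\xi)$ from $t_0$). Your detour through Widom's trace identity $\CB(a;g)=\tr D_1(a;\R_+;g)$ and the operator bound of Theorem \ref{multi:thm} is an attractive idea, and the smooth part of your argument (the Leibniz computation for $f\phi_\delta$, the Cauchy property of $\CB(a;f_\delta)$, the uniform bound of the right homogeneity in $R$) is sound. But the final step contains a genuine gap that cannot be closed by the tools you invoke. To identify $\lim_{\delta\downarrow0}\CB(a;f_\delta)$ with the principal value \eqref{cb:eq} you propose to control the near-diagonal region using only $U(s,s;f)=0$ and the H\"older bound \eqref{hol:eq}. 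That gives $|U(a(\xi_1),a(\xi_2);f)|\le C\1 f\1_1|a(\xi_1)-a(\xi_2)|^{\varkappa}\le C (v/\tau)^{\varkappa}|\xi_1-\xi_2|^{\varkappa}$, so the integrand is only $O(|\xi_1-\xi_2|^{\varkappa-2})$ near the diagonal; since $\varkappa\le1$, this is \emph{not} absolutely integrable, dominated convergence does not apply, and you have not even shown that the principal value defining $\CB(a;f)$ exists. The existence and size of the near-diagonal contribution is exactly the content of Proposition \ref{sub:prop} (\cite[Corollary 6.5]{Sob_16}), whose proof must exploit the full strength of Condition \ref{f:cond} with $n=2$ --- i.e.\ the blow-up rate $|f''(t)|\lesssim|t-t_0|^{\g-2}$ --- and not merely H\"older continuity. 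In other words, the trace-identity route does not bypass the hard singular-integral estimate; it reappears verbatim in the limiting step, which is where the whole difficulty of the proposition lives (the paper itself flags this: ``it is not immediately clear why \dots the coefficient $\CB(a;f)$ is finite'').

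Two secondary points. First, Theorem \ref{multi:thm} (and Lemma \ref{cross_smooth:lem} behind it) requires \eqref{tau_low:eq}, i.e.\ $\a\tau_{\textup{\tiny inf}}\ge\a_0$; with $\a=1$ this forces $\tau_{\textup{\tiny inf}}>0$ and makes the constant depend on $\tau_{\textup{\tiny inf}}$, whereas Proposition \ref{scales:prop} assumes only Condition \ref{a:cond}, which permits $\inf\tau=0$. The scale invariance of both $\CB$ and $V_{\s,1}$ lets you normalise $\tau_{\textup{\tiny inf}}=1$ when it is positive, but the degenerate case is not covered. Second, Proposition \ref{Widom_82:prop}(2) requires $g\in\plainC4_0$ and $V_{2,1}(v,\tau)<\infty$; your mollification cure is workable but needs an explicit error estimate (via \eqref{twice:eq}) and the implication $V_{\s,1}<\infty\Rightarrow V_{2,1}<\infty$ needs $v$ bounded, which is not among your hypotheses. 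These are repairable; the near-diagonal convergence is the step that actually fails as written.
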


We note another useful result from \cite{Sob_16}. It describes the contribution of ``close" points 
$\xi_1$ and $\xi_2$ in the coefficient \eqref{cb:eq}. Suppose that 
$\tau_{\textup{\tiny inf}} := \inf_{\xi\in\R} \tau(\xi)>0$, then we define 
\begin{equation}\label{cb1:eq}
\CB^{(1)}(a; f) := \frac{1}{8\pi^2}\lim_{\varepsilon\downarrow 0}
	\underset{\varepsilon<|\xi_1-\xi_2| < \frac{\tau_{\textup{\tiny inf}}}{2}}\iint  
	\frac{U\bigl(a(\xi_1), a(\xi_2); f\bigr)}{|\xi_1-\xi_2|^2}
	d\xi_1 d\xi_2.
\end{equation}
This quantity is estimated in the following proposition. 
 
\begin{prop}\label{sub:prop} 
Suppose that $f$ satisfies Condition \ref{f:cond} with $n = 2$ and $\g>0$. 
Let $a\in\plainC\infty(\R)$ satisfy Condition \ref{a:cond}.
Suppose also that $\tau_{\textup{\tiny inf}} > 0$.
Then for any $\d\in [0, \varkappa)$,
the following bound holds:
\begin{equation}\label{sub:eq}
|\CB^{(1)}(a; f)|\le 
	C_\d\1 f\1_2 \tau_{\textup{\tiny inf}}^\d V_{\varkappa, 1+ \d}(v, \tau),
\end{equation}
uniformly in the functions $\tau, v$, 
and the symbol $a$ in the sense specified in Remark 
\ref{uniform:rem}.
 \end{prop}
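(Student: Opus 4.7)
My plan is to bound $|U(a(\xi_1),a(\xi_2);f)|/|\xi_1-\xi_2|^2$ pointwise using the $\varkappa$-H\"older continuity of $f$ and the smoothness of $a$, then extract the factor $\tau_{\textup{\tiny inf}}^\d$ via an interpolation exploiting the restriction $|\xi_1-\xi_2| < \tau_{\textup{\tiny inf}}/2$. The argument parallels the proof of Proposition \ref{scales:prop} (Theorem 6.1 of \cite{Sob_16}), with additional care needed near the diagonal $\xi_1=\xi_2$.

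First I would use that $\int_0^1 \log\frac{1-t}{t}\,dt=0$ and integration by parts in the definition \eqref{U:eq} to rewrite
\begin{equation*}
U(s_1,s_2;f) = (s_2-s_1)\int_0^1 f'\bigl((1-t)s_1+ts_2\bigr)\,\log\frac{1-t}{t}\,dt,
\end{equation*}
from which the H\"older bound \eqref{hol:eq} yields $|U(s_1,s_2;f)| \le C\1 f\1_2 |s_1-s_2|^\varkappa$. Since $|\xi_1-\xi_2| \le \tau_{\textup{\tiny inf}}/2 \le \tau(\xi_1)/2$, Condition \ref{a:cond} together with the Lipschitz and comparability properties \eqref{Lip:eq}, \eqref{w:eq} gives both the derivative bound $|a(\xi_1)-a(\xi_2)| \le C(v(\xi_1)/\tau(\xi_1))|\xi_1-\xi_2|$ and the crude bound $|a(\xi_1)-a(\xi_2)| \le Cv(\xi_1)$. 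Interpolating these with parameter $\d/\varkappa \in [0,1)$ produces
\begin{equation*}
|a(\xi_1)-a(\xi_2)|^\varkappa \le C\,\frac{v(\xi_1)^\varkappa}{\tau(\xi_1)^\d}\,|\xi_1-\xi_2|^\d,
\end{equation*}
so the integrand of $\CB^{(1)}$ is majorised by $C\1 f\1_2\,v(\xi_1)^\varkappa \tau(\xi_1)^{-\d}\,|\xi_1-\xi_2|^{\d-2}$.

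The main obstacle is that $|h|^{\d-2}$ is not integrable at $h=0$, so the H\"older estimate alone cannot absorb the contribution near the diagonal. To overcome this, I would complement it with a second-order estimate on $U$ valid when $f\in\plainC2$ on the segment $[a(\xi_1),a(\xi_2)]$: invoking again $\int_0^1\log\frac{1-t}{t}\,dt=0$ to center $f'$ around $f'((s_1+s_2)/2)$ and then applying the mean value theorem to $f'$ yields $|U(s_1,s_2;f)| \le C\1 f\1_2 (s_2-s_1)^2\,\textup{dist}([s_1,s_2],t_0)^{\g-2}$, which translates via the derivative bound on $a$ into an integrand bounded uniformly in $|\xi_1-\xi_2|$. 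Splitting according to whether the segment $[a(\xi_1),a(\xi_2)]$ avoids $t_0$ (using the $\plainC2$ bound) or contains it (using the H\"older bound with the interpolation above), and then integrating in $\xi_2$ over $B(\xi_1,\tau_{\textup{\tiny inf}}/2)$, one extracts the factor $\tau_{\textup{\tiny inf}}^\d$. The outer integration in $\xi_1$ then delivers exactly $V_{\varkappa,1+\d}(v,\tau)$. Uniformity in $\tau,v,a$ is immediate, as all constants depend only on those in Conditions \ref{f:cond}, \ref{a:cond} and in \eqref{Lip:eq}, \eqref{w:eq}.
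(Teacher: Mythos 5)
First, a remark on the comparison: the paper does not prove this proposition itself --- it is quoted from \cite[Corollary 6.5]{Sob_16} --- so you are attempting a from-scratch argument for an outsourced result. Your preliminary steps are all correct: the identity $U(s_1,s_2;f)=(s_2-s_1)\int_0^1 f'((1-t)s_1+ts_2)\log\frac{1-t}{t}\,dt$, the H\"older bound $|U|\le C\1 f\1_2|s_1-s_2|^\varkappa$, the interpolation yielding $v(\xi_1)^\varkappa\tau(\xi_1)^{-\d}|\xi_1-\xi_2|^\d$, and the second-order bound $|U(s_1,s_2;f)|\le C\1 f\1_2|s_1-s_2|^2\,\dist([s_1,s_2],t_0)^{\g-2}$. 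You also correctly identify the diagonal as the crux.

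The gap is that your final case split does not close the argument precisely where it matters, namely when $a(\xi_1)$ is near $t_0$. (i) In the region where the segment $[a(\xi_1),a(\xi_2)]$ contains $t_0$ you revert to the H\"older/interpolation bound, whose $h:=\xi_2-\xi_1$ dependence is $|h|^{\d-2}$ with $\d<\varkappa\le 1$; this is non-integrable at $h=0$, and this region contains a full neighbourhood of $h=0$ whenever $a(\xi_1)=t_0$. This is not merely a lossy estimate: for $f(t)=|t-t_0|^\g$ with $\g<1$ one computes $U(t_0,t_0+\epsilon;f)=c_\g\epsilon^\g$ with $c_\g=\int_0^1\frac{t^{\g-1}-1}{1-t}\,dt>0$, so if $a(\xi_1)=t_0$ and $a'(\xi_1)\ne0$ the inner integral $\int |U(a(\xi_1),a(\xi_2);f)|\,|\xi_1-\xi_2|^{-2}\,d\xi_2$ behaves like $\int|h|^{\g-2}\,dh=\infty$. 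Consequently \emph{no} argument of the form ``bound the inner $\xi_2$-integral uniformly for each fixed $\xi_1$, then integrate in $\xi_1$'' can succeed; the two integrations must be handled jointly. (ii) In the region where the segment avoids $t_0$, your $\plainC{2}$ bound gives an integrand $\lesssim v(\xi_1)^2\tau(\xi_1)^{-2}\dist([a(\xi_1),a(\xi_2)],t_0)^{\g-2}$. This is indeed free of the explicit factor $|\xi_1-\xi_2|^{-2}$, but it is not bounded: the distance can be arbitrarily small (both endpoints on the same side of $t_0$, one of them very close), and its $\xi_2$-integral is essentially $\int|a(\xi_2)-t_0|^{\g-2}\,d\xi_2$, which the hypotheses do not control (nothing prevents $a$ from being very flat near the level $t_0$). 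The missing ingredient is the mechanism by which the exceptional set of $\xi_1$ is small in a way coupled to $h$ (in the model case $a(\xi)=t_0+c\xi$ this is the substitution $u=a(\xi_1)/h$, which converts $|h|^{\g-2}$ into the integrable $|h|^{\g-1}$); supplying it is exactly the content of the double-integral estimates in \cite[Section 6]{Sob_16}, and without it the proof does not go through.
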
 
This bound follows from \cite[Corollary 6.5]{Sob_16}.  
 
\bigskip
The bound \eqref{coeffscales:eq} plays a central role in the proof of the 
following theorems. From now on we assume 
that $\tau_{\textup{\tiny inf}}  >0$ 
and that $\tau_{\textup{\tiny inf}}$ and $\a$ satisfy \eqref{tau_low:eq}. 
The convergence in the next theorems is uniform in 
the functions $\tau, v$, and the symbol $a$ in the sense specified in Remark 
\ref{uniform:rem}, but no 
uniformity is claimed in the parameter $\a_0$ in \eqref{tau_low:eq}.
 
\begin{thm} \label{scale_asymp:thm} 
Let $I$ and $\omega$ be as described in \eqref{Kint:eq} and \eqref{omega:eq}. 
Suppose that $f$ satisfies Condition \ref{f:cond} with some $\g >0$, $n = 2$ and 
some $t_0\in\R$. Let $a\in\plainC\infty(\R)$ 
be a real-valued symbol satisfying Condition 
	\ref{a:cond}, and let $\a\tau_{\textup{\tiny inf}}\ge \a_0$.   
	Suppose that  
	$\|v\|_{\plainL\infty}\le 1$ 
and $V_{\s, 1}(v, \tau)<\infty$ for some $\s\in (0, 1]$, $\s < \g$, and 
\begin{equation}\label{higher:eq}
\underset{\a\to\infty}\lim\a^{-m+1} 
\frac{V_{1, m}(v, \tau)}{V_{\s, 1}(v, \tau)} = 0,
\end{equation}
uniformly in $v, \tau$ (see Remark \ref{uniform:rem}), for some $m \ge 3$. Then  
\begin{equation}\label{scale_asymp:eq}
	\underset{\a\to\infty}
	\lim \frac{1}{V_{\s, 1}(v, \tau)}
	\bigl(\tr D_\a(a,I; f) - \omega\CB(a; f)\bigr) = 0,
\end{equation}
and the convergence is uniform in $v, \tau$ and $a$.
\end{thm}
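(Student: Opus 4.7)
The plan is an approximation argument: for $\delta\in(0,1]$, replace $f$ by a smooth approximation $f^\delta\in\plainC\infty_0(\R)$, apply Theorem~\ref{interval_fn:thm} to $f^\delta$ for the main asymptotic, control the residual $g^\delta:=f-f^\delta$ using Theorem~\ref{multi:thm} and Proposition~\ref{scales:prop}, and balance the two contributions via the uniform limit in hypothesis~\eqref{higher:eq}.

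Concretely, fix a standard non-negative even mollifier $\rho\in\plainC\infty_0(\R)$, set $\rho_\delta(s):=\delta^{-1}\rho(s/\delta)$, and take $f^\delta:=f*\rho_\delta$. With $m\ge 3$ as in~\eqref{higher:eq} and $n\ge m+6$, Theorem~\ref{interval_fn:thm} applied to $f^\delta$ with $r=1$ (admissible since $\|v\|_{\plainL\infty}\le 1$) yields
\begin{equation*}
|\tr D_\a(a,I;f^\delta)-\omega\CB(a;f^\delta)|\le C_{m,n}\,N_n(f^\delta;1)\,\a^{-m+1}V_{1,m}(v,\tau).
\end{equation*}
A routine bookkeeping---placing at most two derivatives on $f$, bounded via~\eqref{fbound:eq}, and the remaining $n-2$ on the mollifier, with care taken when $\g<1$ so that $f''$ is never integrated against $1$---gives the explicit power bound $N_n(f^\delta;1)\le C\|f\|_n\,\delta^{\g-n+1}$.

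For the residual $g^\delta=f-f^\delta$, first note that $\tilde g^\delta:=g^\delta-g^\delta(t_0)$ vanishes at $t_0$ and that both $\tr D_\a(a,I;\cdot)$ and $\CB(a;\cdot)$ annihilate constants, so $\tilde g^\delta$ may be substituted for $g^\delta$ without altering either quantity. Introduce a smooth cutoff $\chi_\delta$ of scale $\delta$ about $t_0$ and split $\tilde g^\delta=\tilde g^\delta\chi_\delta+\tilde g^\delta(1-\chi_\delta)$. Using the H\"older bound~\eqref{hol:eq} for $f$ together with the pointwise bounds $|(f*\rho_\delta)^{(k)}(t)|\le C\|f\|_n\delta^{\g-k}$ near $t_0$, the near-$t_0$ piece $\tilde g^\delta\chi_\delta$ is seen to satisfy Condition~\ref{f:cond} with radius $R\sim\delta$ and norm $\le C\|f\|_n$; Theorem~\ref{multi:thm} applied with $q=1$ (so $\a^{d-1}=1$ in $d=1$) and Proposition~\ref{scales:prop} then deliver
\begin{equation*}
\|D_\a(a,I;\tilde g^\delta\chi_\delta)\|_{\GS_1}+|\CB(a;\tilde g^\delta\chi_\delta)|\le C\,\delta^{\g-\s}\,V_{\s,1}(v,\tau).
\end{equation*}
The far-$t_0$ piece is $\plainC2$-smooth with amplitude $\le C\|f\|_2\,\delta^2\,|t-t_0|^{\g-2}$ for $|t-t_0|\ge\delta$, by Taylor expansion of $f*\rho_\delta$ around $f$ combined with the evenness of $\rho$; a further decomposition (or a direct analysis of the principal-value integral defining $\CB$) shows that its contribution to both the trace and the coefficient is again bounded by $C\delta^{\g-\s}V_{\s,1}(v,\tau)$.

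Combining the estimates and dividing by $V_{\s,1}(v,\tau)$,
\begin{equation*}
\frac{|\tr D_\a(a,I;f)-\omega\CB(a;f)|}{V_{\s,1}(v,\tau)}\le C_1\delta^{\g-\s}+C_2\,\delta^{\g-n+1}\,\a^{-m+1}\,\frac{V_{1,m}(v,\tau)}{V_{\s,1}(v,\tau)}.
\end{equation*}
Given $\eta>0$, first choose $\delta$ small enough that $C_1\delta^{\g-\s}<\eta/2$; then hypothesis~\eqref{higher:eq} forces the second term below $\eta/2$ for all sufficiently large $\a$, uniformly in $v$, $\tau$, and $a$, which yields~\eqref{scale_asymp:eq}. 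The main technical obstacle is the treatment of the far-$t_0$ piece of $\tilde g^\delta$: its $\plainL\infty$-amplitude is small (order $\delta^{\g}$ at the edge of the cutoff and decaying away), yet it does not fit Condition~\ref{f:cond} off-the-shelf with the intended radius $R\sim\delta$ and norm $\lesssim\|f\|_n$, so one must either perform a careful sub-decomposition that isolates the part of small support from the genuinely spread-out, small-amplitude remainder, or estimate its contribution directly from the principal-value integrand in~\eqref{cb:eq} rather than through Proposition~\ref{scales:prop}.
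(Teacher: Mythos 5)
Your overall architecture (approximate $f$ by something smooth enough for Theorem \ref{interval_fn:thm}, control the residual by the Schatten-norm machinery, then balance via \eqref{higher:eq}) is the right one, and your bound $N_n(f^\delta;1)\lesssim \1 f\1_2\,\delta^{\g-n+1}$ together with the final ``choose $\delta$ first, then let $\a\to\infty$'' step is sound. But the obstacle you flag at the end is not a technicality --- it is a genuine gap, and with the tools available in the paper it cannot be closed in the form you propose. The problem is that mollification makes the residual $g^\delta=f-f*\rho_\delta$ small only in amplitude, not in the $\plainC2$-type norms that every trace-norm estimate in the paper requires. Proposition \ref{Szego1:prop} (hence Theorem \ref{multi:thm}) needs $n\ge 2$, so its bound always carries a factor controlling $\sup|g^{(2)}|$ weighted by $|t-t_0|^{2-\g}$; on the far piece one has $|(g^\delta)''(t)|\approx|f''(t)-f''*\rho_\delta(t)|\lesssim |t-t_0|^{\g-2}$ with \emph{no} gain in $\delta$ (mollification does not improve the second derivative, and near the inner cutoff it inherits the blow-up of $f''$). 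Consequently any application of Theorem \ref{multi:thm} to the far piece --- directly, or after the dyadic sub-decomposition you hint at --- yields a bound of order $R^{\g-\s}V_{\s,1}$ with $R\sim 1$, i.e.\ nothing small in $\delta$. The alternative of estimating the far piece's contribution to the \emph{trace} directly is not available either: Theorem \ref{interval_fn:thm} requires $n\ge m+6\ge 9$ derivatives, while the far piece is only $\plainC2$ because $f$ itself is only $\plainC2$ away from $t_0$ under Condition \ref{f:cond} with $n=2$. (Your suggestion of a direct analysis of the principal-value integral could plausibly rescue the $\CB$-coefficient side, since $U$ involves only function values and $[g^\delta]_{\plainC{0,\varkappa'}}\lesssim\delta^{\varkappa-\varkappa'}$ is small, but it does nothing for $\tr D_\a(a,I;\cdot)$, which is the hard half.)

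The paper avoids this by reversing the order of operations and by using a different approximation. It first cuts $f$ near the singularity: $f=f_R^{(1)}+f_R^{(2)}$ with $f_R^{(1)}$ supported in $|t-t_0|\le R$ (handled exactly as your near piece, via Theorem \ref{multi:thm} and \eqref{coeffscales:eq}, giving $R^{\g-\s}V_{\s,1}$), while $f_R^{(2)}$ is simply $f$ away from the singularity --- a genuine $\plainC2_0$ function for which no smallness is needed. It then proves \eqref{scale_asymp:eq} for arbitrary $\plainC2$ functions by \emph{Weierstrass polynomial approximation in the $\plainC2$ norm}: the polynomial part goes into Theorem \ref{interval_fn:thm}, and the error $g_\varepsilon$ is small \emph{together with its first two derivatives}, which is precisely what Theorem \ref{multi:thm} and \eqref{twice:eq} need. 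If you want to keep mollification, the fix is to mollify only \emph{after} the cutoff: $f_R^{(2)}*\rho_\delta\to f_R^{(2)}$ in $\plainC2$ because $(f_R^{(2)})''$ is uniformly continuous, so that residual is small in the right norm. As written, mollifying across the singularity first produces a far-field residual that none of the paper's estimates can make small.
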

  
In order to avoid possible confusion we recall that 
$v$, $\tau$ are thought of as functional parameters of the problem,  
and they may depend on the numerical parameter    
 $\a$. Thus the equality \eqref{higher:eq} is a genuine, 
 non-vacuous assumption.
 
For the next theorem recall that the function $\eta$ is defined in \eqref{eta:eq}.

\begin{thm} \label{smalll_asymp:thm} 
Let $I$ and $\omega$ be as in the previous theorem.
	Suppose that $f$ satisfies Condition \ref{f:cond} with some $\g >0$, $t_0 = 0$,
	and all $n$. Let $a\in\plainC\infty(\R)$ be a real-valued 
	symbol satisfying 
	Condition \ref{a:cond}, and let $\a\tau_{\textup{\tiny inf}}\ge \a_0$.     
	Suppose that $\|v\|_{\plainL\infty}\le 1$ 
	and $V_{\s, 1}(v, \tau)<\infty$ for some $\s\in (0, 1]$, $\s < \g$, and that 
	\eqref{higher:eq} is satisfied. 
	 Then  
	for any real $\l>0$, one has
	\begin{equation}\label{smalll_asymp:eq}
	\underset{\a\to\infty}
	\lim \frac{1}{\l^\g V_{\s, 1}(v, \tau)}
	\bigl(\tr D_\a(\l a, I; f) 
	- \omega\CB(\l a; f)\bigr)	= 0. 
	\end{equation}
In addition, if $V_{\s, 1}(v, \tau) <\infty$ with some $\s < 1$, then 
	\begin{equation}\label{smalll_eta1:eq}
	\underset{\a\to\infty}\lim 
	\frac{1}{\l V_{\s, 1}(v, \tau)}\bigl(\tr D_\a(\l a, I; \eta) - 
	\omega \CB( \l a; \eta)\bigr)= 0. 
	\end{equation}
The convergence in 
\eqref{smalll_asymp:eq} and \eqref{smalll_eta1:eq} 
is uniform in 
the functions $\tau, v$, and the symbol $a$ in the sense specified in Remark 
\ref{uniform:rem}, and in 
\eqref{smalll_eta1:eq} it is also uniform in 
the parameter $\l\in (0, \l_0]$ for any $\l_0<\infty$. 	
\end{thm}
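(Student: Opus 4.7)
My plan is to deduce both parts from Theorem \ref{scale_asymp:thm} via exact scaling identities that eliminate the parameter $\l$.

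For \eqref{smalll_asymp:eq} I introduce the rescaled function $f^{(\l)}(s) := \l^{-\g}f(\l s)$. A direct change of variable shows $\1 f^{(\l)}\1_n = \1 f\1_n$ for all $n$, so $f^{(\l)}$ satisfies Condition \ref{f:cond} with the same $\g$ and the same base point $t_0 = 0$ as $f$. Using $W_\a(\l a; I) = \l W_\a(a; I)$ together with $f(\l s) = \l^\g f^{(\l)}(s)$, functional calculus and a parallel computation with the integrand of \eqref{U:eq} yield the two identities
\begin{equation*}
	D_\a(\l a, I; f) = \l^\g D_\a(a, I; f^{(\l)}), \qquad \CB(\l a; f) = \l^\g\CB(a; f^{(\l)}).
\end{equation*}
Substituted into \eqref{smalll_asymp:eq} the factor $\l^\g$ cancels, and the claim reduces to Theorem \ref{scale_asymp:thm} applied to the pair $(a, f^{(\l)})$; its hypotheses transfer verbatim and its constants are independent of $\l$, giving convergence to $0$ uniformly in $v, \tau, a$.

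For \eqref{smalll_eta1:eq} the function $\eta$ is not in Condition \ref{f:cond}, but it obeys the scaling identity $\eta(\l t) = \l\eta(t) - (\l\log\l)\,t$. Because $\chi_I W_\a(a; I)\chi_I = W_\a(a; I)$, the linear correction $-(\l\log\l)\,t$ contributes zero to $D_\a(\l a, I; \eta)$; since the integrand of \eqref{U:eq} also vanishes identically when $g(t) = t$ (observed in the text after \eqref{U_sym:eq}), the same correction contributes zero to $\CB(\l a; \eta)$ as well. Hence
\begin{equation*}
	\tr D_\a(\l a, I; \eta) - \om\,\CB(\l a; \eta)
	= \l\bigl(\tr D_\a(a, I; \eta) - \om\,\CB(a; \eta)\bigr),
\end{equation*}
so after dividing by $\l V_{\s, 1}(v, \tau)$ the $\l$-dependence disappears completely and uniformity in $\l\in(0, \l_0]$ is automatic. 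Assuming without loss of generality that $\|a\|_{\plainL\infty}\le 1$, I next replace $\eta$ by a smooth cut-off $\tilde\eta$ equal to $\eta$ on $[-1, 1]$ and supported in $(-2, 2)$; since both the spectrum of $W_\a(a; I)$ and the range of $a$ lie in $[-1, 1]$, neither $D_\a(a, I; \eta)$ nor $\CB(a; \eta)$ is affected. A direct check based on $\eta^{(k)}(t) = O(|t|^{1-k})$ (with a logarithmic factor only for $k\le 1$) shows $\tilde\eta$ satisfies Condition \ref{f:cond} with $n = 2$, $t_0 = 0$, $R = 2$ and any exponent $\g\in (0, 1)$. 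Choosing such a $\g\in(\s, 1)$---possible because the hypothesis here demands $\s < 1$---Theorem \ref{scale_asymp:thm} applied to $(a, \tilde\eta)$ completes the proof.

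The main delicate point is the exact cancellation of the $\l\log\l$ term in the scaling of $\eta$, which is essential for the uniformity in $\l$ claimed in \eqref{smalll_eta1:eq}. It rests on two ingredients acting in concert: the compression identity $\chi_I W_\a(a;I)\chi_I = W_\a(a; I)$ (so that the ``linear piece'' drops from the operator difference) and the vanishing of $U(\cdot, \cdot; g)$ for $g$ linear (so that it drops from $\CB$). If either failed, an extra $\l|\log\l|$ factor would remain and the uniformity as $\l\downarrow 0$ would break down; the rest of the argument is then routine.
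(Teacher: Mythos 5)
Your argument is correct, and the treatment of \eqref{smalll_eta1:eq} coincides with the paper's: the identity $\eta(\l t)=\l\eta(t)-(\l\log\l)t$, the vanishing of $D_\a(\,\cdot\,;g)$ and of $U(\,\cdot\,,\,\cdot\,;g)$ for linear $g$, and the reduction to \eqref{scale_asymp:eq} for $\eta$ (your explicit cut-off $\tilde\eta$ fills in a detail the paper leaves implicit). For \eqref{smalll_asymp:eq}, however, you take a genuinely different route. The paper performs the same reduction to $f^{(\l)}$ but then does \emph{not} invoke Theorem \ref{scale_asymp:thm} as a black box; it re-runs only Step~2 of that proof, splitting $f^{(\l)}=g_R^{(1)}+g_R^{(2)}$ and applying Theorem \ref{interval_fn:thm} directly to the smooth part $g_R^{(2)}$, with $N_n(g_R^{(2)};r)\le C\1 f\1_n$ uniformly in $\l$. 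The reason, stated in the remark following the proof, is that Step~1 of Theorem \ref{scale_asymp:thm} rests on a Weierstrass polynomial approximation of the function at hand, and for $f^{(\l)}$ that polynomial depends on $\l$ in a way that is not controlled. This is exactly why your assertion that the ``constants'' of Theorem \ref{scale_asymp:thm} ``are independent of $\l$'' is not justified: the rate of convergence in \eqref{scale_asymp:eq} depends on the function, hence on $\l$ through $f^{(\l)}$. Since \eqref{smalll_asymp:eq} is asserted for each fixed $\l$ and its uniformity clause covers only $\tau$, $v$, $a$, your black-box application is nonetheless sufficient for the statement as written --- and it would in fact require only $n=2$ in Condition \ref{f:cond} rather than all $n$. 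What the paper's longer route buys is $\l$-independence of every estimate (the reason the theorem demands smoothness to all orders); what yours buys is brevity and weaker hypotheses, at the cost of any control in $\l$ for \eqref{smalll_asymp:eq}.
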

 
 We point out that the smoothness 
conditions on the 
function $f$ in Theorem \ref{smalll_asymp:thm} 
are much more restrictive than those in Theorem \ref{scale_asymp:thm}. 
This difference
will be briefly explained after the proof of Theorem \ref{smalll_asymp:thm}.
 
The main difficulty lies in the proof of Theorem \ref{interval_res:thm}, whereas the 
remaining theorems are derived from it via relatively standard methods. In the next 
section we concentrate on this derivation. The proof of Theorem \ref{interval_res:thm} 
is deferred until Section \ref{res1:sect}. 

\section{Proofs of Theorems \ref{interval_fn:thm}, \ref{scale_asymp:thm} \& \ref{smalll_asymp:thm}}\label{proofs:sect}

We use the almost analytic extension constructed in Lemma \ref{ab:lem} 
with some $r \ge \|v\|_{\plainL\infty}$, where $v$ is the amplitude of the symbol 
$a$ as in \eqref{scales:eq}. Let $\tilde f$ be the almost analytic extension of $f$ 
constructed in Lemma \ref{ab:lem}.  
It follows from \eqref{HS:eq} that   
\begin{align*}
	\tr D_\a(a, I; f) 
	- &\ \omega\CB(a; f)\\[0.2cm]
	= &\ \frac{1}{\pi}\iint \frac{\p}{\p \overline{z}}\tilde f(x, y; r) \bigl(\tr D_\a(a,I; r_z) 
	- \omega\CB(a; r_z)\bigr) dx dy.
\end{align*}
Thus by Theorem \ref{interval_res:thm} and by \eqref{ab:eq} we have
\begin{align}\label{trans:eq}
	|\tr D_\a(a, I; f) 
	- &\ \omega\CB(a; f)|\notag\\[0.2cm]
	\le &\ C \a^{-m+1}  
	\iint \Big|\frac{\p}{\p \overline{z}}\tilde f(x, y; r)\Big| \,
	\bigl(|x|+|y| + \|a\|_{\plainL\infty}\bigr) ^3|y|^{-4} M^{(m)}(a_z, a_z^{-1}) dx dy\notag\\[0.2cm]
	\le &\ C\a^{-m+1} 
	\int \underset{|y|< \lu x\ru_r}\int
	F(x; r)\lu x\ru_r^3  |y|^{n-5} M^{(m)}(a_z, a_z^{-1}) dy dx, 
\end{align} 
for any $r\ge \|v\|_{\plainL\infty}$. 
Let us now estimate $M^{(m)}(a_z, a_z^{-1})$. 

\begin{lem}\label{az:lem} 
	Suppose that $a\in\plainC{m}(\R)$ satisfies 
	\eqref{scales:eq} with some $m\ge 1$. 
	%
	%
	Then 
	\begin{equation}\label{M_above:eq}
	M^{(m)}(a_z, a_z^{-1})\le C_m 
	\frac{\|v\|_{\plainL\infty}+|z|}{|\im z|^2}
	\biggl( 1 + \frac{\|v\|_{\plainL\infty}^{m-1}}{|\im z|^{m-1}}	\biggr)
	V_{1, m}(v, \tau). 
	\end{equation}
	Moreover, for any $r\ge \| v\|_{\plainL\infty}$, and all 
	$y$ with  $|y|< \lu x\ru_r$, we have 
	\begin{equation}\label{M_above1:eq}
	M^{(m)}(a_z, a_z^{-1})\le C_m \frac{\lu x\ru_r }{|y|^2}\biggl(
	1+ \frac{\lu x\ru_r^{m-1}}{|y|^{m-1}}
	\biggr)V_{1, m}(v, \tau),
	\end{equation}
	with a constant $C_m$ independent of $r$. 
	\end{lem}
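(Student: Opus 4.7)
The plan is to exploit the two defining terms of $M^{(m)}$ separately. By definition $a_z(\xi):=a(\xi)-z$, so $r_z\circ a=a_z^{-1}$, $\p_\xi^m a_z=\p_\xi^m a$, and $\|a_z\|_{\plainL\infty}\le \|a\|_{\plainL\infty}+|z|\le C(\|v\|_{\plainL\infty}+|z|)$, using $|a|\le C_0 v$. Since $a$ is real-valued and $\im z\ne 0$, the trivial inequality $|a(\xi)-z|\ge |\im z|$ gives $\|a_z^{-1}\|_{\plainL\infty}\le |\im z|^{-1}$. Integrating \eqref{scales:eq} with $k=m$ over $\R$ yields $\|\p_\xi^m a\|_{\plainL1}\le C_m V_{1,m}(v,\tau)$. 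These three estimates handle the first summand of $M^{(m)}(a_z,a_z^{-1})$ and all the easy ingredients of the second one.

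The main work is to bound $\|\p_\xi^m a_z^{-1}\|_{\plainL1}$. I will apply the Fa\`a di Bruno formula to $g\mapsto 1/g$ at $g=a_z$:
\begin{equation*}
\p_\xi^m a_z^{-1}(\xi)=\sum_{\pi}\frac{(-1)^{|\pi|}|\pi|!}{a_z(\xi)^{|\pi|+1}}\prod_{B\in\pi}\p_\xi^{|B|}a(\xi),
\end{equation*}
where the sum runs over partitions $\pi$ of $\{1,\dots,m\}$. Using $|\p_\xi^{|B|}a(\xi)|\le C_{|B|}\tau(\xi)^{-|B|}v(\xi)$ and $\sum_{B\in\pi}|B|=m$, together with $|a_z|\ge |\im z|$, gives the pointwise bound
\begin{equation*}
|\p_\xi^m a_z^{-1}(\xi)|\le C_m\,\tau(\xi)^{-m}\sum_{k=1}^{m}\frac{v(\xi)^k}{|\im z|^{k+1}},
\end{equation*}
where $k=|\pi|$ counts the number of blocks. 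Now, using $\|v\|_{\plainL\infty}<\infty$ together with $v(\xi)^k\le \|v\|_{\plainL\infty}^{k-1}v(\xi)$, integration gives
\begin{equation*}
\|\p_\xi^m a_z^{-1}\|_{\plainL1}\le C_m\, V_{1,m}(v,\tau)\sum_{k=1}^m\frac{\|v\|_{\plainL\infty}^{k-1}}{|\im z|^{k+1}}\le \frac{C_m V_{1,m}(v,\tau)}{|\im z|^{2}}\Big(1+\frac{\|v\|_{\plainL\infty}^{m-1}}{|\im z|^{m-1}}\Big),
\end{equation*}
the last step being a simple geometric-sum estimate. Combining this with the bounds on $\|a_z\|_{\plainL\infty}$, $\|a_z^{-1}\|_{\plainL\infty}$ and $\|\p_\xi^m a\|_{\plainL1}$ in the definition \eqref{M:eq} yields \eqref{M_above:eq} after absorbing the lower-order terms into the displayed prefactor.

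To derive \eqref{M_above1:eq} I will specialise $z=x+iy$ with $|y|<\lu x\ru_r$ and $r\ge \|v\|_{\plainL\infty}$. Then $\|v\|_{\plainL\infty}\le r\le \lu x\ru_r$ and $|z|\le |x|+|y|\le 2\lu x\ru_r$, so $\|v\|_{\plainL\infty}+|z|\le 3\lu x\ru_r$ and the factor $\|v\|_{\plainL\infty}^{m-1}$ in \eqref{M_above:eq} is controlled by $\lu x\ru_r^{m-1}$. Substituting into \eqref{M_above:eq} with $|\im z|=|y|$ produces \eqref{M_above1:eq}, with a constant that is independent of $r$ because only $r\le \lu x\ru_r$ is used.

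The only slightly delicate point is the combinatorial bookkeeping in the Fa\`a di Bruno step, where one must check that each partition with $k$ blocks contributes a factor proportional to $v^k/|\im z|^{k+1}$ and, because $\sum|B|=m$, precisely one factor $\tau^{-m}$; once that is in place the remaining manipulations are elementary.
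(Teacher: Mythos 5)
Your proof is correct and follows essentially the same route as the paper: the first summand of $M^{(m)}(a_z,a_z^{-1})$ is handled by the trivial bounds $\|a_z^{-1}\|_{\plainL\infty}\le|\im z|^{-1}$ and $\|\p_\xi^m a\|_{\plainL1}\le C_mV_{1,m}(v,\tau)$, and the second by a pointwise estimate of $|\p_\xi^m a_z^{-1}|$ of the form $C_m\tau^{-m}\bigl(v|\im z|^{-2}+v^m|\im z|^{-m-1}\bigr)$ followed by integration (the paper invokes the Leibniz rule where you use Fa\`a di Bruno, but the resulting bounds coincide up to constants). The passage to \eqref{M_above1:eq} via $\|v\|_{\plainL\infty}\le r\le\lu x\ru_r$ and $|z|\le 2\lu x\ru_r$ is exactly the paper's concluding step.
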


\begin{proof} 
	By definition \eqref{M:eq},
	\begin{equation}\label{M1:eq}
	M^{(m)}(a_z, a_z^{-1}) = \| \p_{\xi}^m a\|_{\plainL1} \|a_z^{-1}\|_{\plainL\infty} 
	+ \| \p_{\xi}^m a_z^{-1}\|_{\plainL1} \|a_z\|_{\plainL\infty} .
	\end{equation}
	In view of the bound \eqref{scales:eq} 
	the first summand in the above formula 
	is bounded by 
	\begin{equation*}
		C_m\frac{1}{|\im z|}\int 
		\tau(\xi)^{-m} v(\xi) d\xi.  
	\end{equation*}
	To estimate the second term on the right-hand side of \eqref{M1:eq} we use
	the Leibniz formula and \eqref{scales:eq} to obtain 
	\begin{equation*}
		|\p_{\xi}^m a_z^{-1}|\le C_m 
		\biggl(\frac{v}{|\im z|^2}+	
		\frac{v^m}{|\im z|^{m+1}}	\biggr)
		\tau^{-m}.
	\end{equation*}
	Thus the second summand in \eqref{M1:eq} does not exceed
	\begin{equation*}
		C_m (\|v\|_{\plainL\infty} + |z|) \biggl(\frac{1}{|\im z|^2}+	
		\frac{\|v\|_{\plainL\infty}^{m-1}}{|\im z|^{m+1}}\biggr)\int 
		\tau(\xi)^{-m} v(\xi) 
		d\xi. 
	\end{equation*}
	This leads to the claimed bound \eqref{M_above:eq}. 
	
	For $r\ge \|v\|_{\plainL\infty}$ and $|y|< \lu x\ru_r$, 
	the bound \eqref{M_above1:eq} 
	immediately follows from \eqref{M_above:eq}.
\end{proof}

\begin{proof}[Proof of Theorem \ref{interval_fn:thm}]
By \eqref{M_above1:eq}, the integral on the 
	right-hand side of \eqref{trans:eq} is estimated by
	\begin{equation*}
		V_{1, m}(v, \tau)	\int \underset{|y|< \lu x\ru_r}\int
			F(x; r)\lu x\ru_r^4  |y|^{n-7} 
			\biggl(
			1+ \frac{\lu x\ru_r^{m-1}}{|y|^{m-1}}
			\biggr)dy dx.
		\end{equation*}
	Since $n \ge m+6$, this integral is finite and it is bounded by 
	\begin{equation*}
		C V_{1, m}(v, \tau) \int F(x;r) \lu x \ru_r^{n-2} dx 
		= C V_{1, m}(v, \tau) N_n(f; r),
	\end{equation*}
	where we have used the definition \eqref{Nn:eq}.
	Therefore \eqref{trans:eq} yields the bound
	\begin{equation*}
		|\tr D_\a(a, I; f) - \omega\CB(a; f)| 
		\le	C N_n(f; r) \a^{-m+1} V_{1, m}(v, \tau),
	\end{equation*}
	as claimed. 
\end{proof}

\begin{proof}[Proof of Theorem \ref{scale_asymp:thm}] 
For brevity we denote $D_\a(f) := D_\a(a,I; f)$ and $\CB(f) := \CB(a; f)$. 

\underline{Step 1: proof of formula \eqref{scale_asymp:eq}  
for $f\in \plainC2(\R)$.} 
Without loss of generality we may assume that  
$\|a\|_{\plainL\infty}\le 1/2$ and that 
the function $f$ is supported on the interval $[-1, 1]$. 
By the Weierstrass Theorem, for any $\varepsilon>~0$ 
one can find a real polynomial 
$f_\varepsilon$ such that the function 
$g_\varepsilon := f-f_\varepsilon$ satisfies the bound 
\begin{equation}\label{feps:eq}
\max_{0\le k\le 2}\max_{ |t|\le 1}|g_\varepsilon^{(k)}(t)|< \varepsilon.
\end{equation}
Clearly,
\begin{equation*}
D_\a(f) = D_\a(f_\varepsilon) + D_\a(g_\varepsilon).
\end{equation*}
In order to estimate $D_\a(g_\varepsilon)$ we extend the function $g_\varepsilon$ 
to the interval $[-2, 2]$ as a $\plainC2_0$-function in such a way that 
$\|g_\varepsilon\|_{\plainC2} \le C\varepsilon$ with some universal 
constant $C>0$. 
Observe now that 
such $g_\varepsilon$ satisfies Condition \ref{f:cond} with $t_0 = -3$, $R=5$, $n=2$  
and arbitrary $\g >0$. Furthermore, $\1 g_\varepsilon\1_2 < C\varepsilon$. 
To be definite we take $\gamma=2$. 
Since the condition \eqref{tau_low:eq} is satisfied, 
we may use Theorem \ref{multi:thm} 
with $q = 1$ and arbitrary $\s \in (0, 1)$, so that  
\begin{equation}\label{geps:eq}
\|D_\a(g_\varepsilon)\|_{\GS_1}\le C \varepsilon V_{\s, 1}(v, \tau).
\end{equation}
Moreover, by \eqref{twice:eq},
\begin{equation}\label{bgeps:eq}
|\CB(g_\varepsilon)|\le C\varepsilon V_{2, 1}(v, \tau)
\le C\varepsilon V_{\s, 1}(v, \tau).
\end{equation}
In order to handle the trace of $D_\a(f_\varepsilon)$, extend 
the polynomial 
$f_\varepsilon$ as a $\plainC\infty_0$-function on the interval $[-2, 2]$. Thus by Theorem 
\ref{interval_fn:thm} with $r = 1\ge \|v\|_{\plainL\infty}$,
\begin{align*}
|\tr D_\a(f_\varepsilon) - \omega \CB(f_\varepsilon)|\le 
C N_n(f_\varepsilon; 1)
\a^{-m+1}V_{1, m}(v, \tau),
\end{align*}
with arbitrary $m\ge 3$. 
In view of the condition \eqref{higher:eq} 
and by virtue of \eqref{geps:eq} and \eqref{bgeps:eq}, 
we have 
\begin{align*}
\limsup&\,\frac{1}{V_{\s, 1}(v, \tau)}\big|\tr D_\a(f) 
- \ \omega\CB(f)\big|\\[0.2cm] 
\le &\ \limsup\frac{1}{V_{\s, 1}(v, \tau)}\big|\tr D_\a(f_\varepsilon) 
- \omega\CB(f_\varepsilon)\big|\\[0.2cm]
+ &\ \limsup\frac{1}{V_{\s, 1}(v, \tau)}
\|D_\a(g_\varepsilon)\|_{\GS_1} 
+ \omega\limsup\frac{1}{V_{\s, 1}(v, \tau)}
|\CB(g_\varepsilon)|\\[0.2cm]
\le &\ 
 N_n(f_{\varepsilon}; 1) \ \limsup\ \a^{-m+1} 
\frac{V_{1, m}(v, \tau)}{V_{\s, 1}(v, \tau)}
+ C\varepsilon =  C\varepsilon.
\end{align*}
Here the $\limsup$ is taken as $\a\to\infty$, 
$\a\tau_{\textup{\tiny inf}}\ge \a_0$, and 
it is uniform in $v, \tau$ and $a$. 
Since $\varepsilon>0$ is arbitrary, this leads to 
\eqref{scale_asymp:eq} for arbitrary $\plainC2$-functions $f$.

\underline{Step 2. Completion of the proof.} 
Let $f$ be a function as specified in the theorem.  
Let $\z\in\plainC\infty_0(\R)$ be a real-valued function satisfying \eqref{zeta:eq}.
We represent 
\begin{align*}
f= &\ f_R^{(1)}+ f_R^{(2)}, 0<R\le 1,\\[0.2cm]
f_R^{(1)}(t) := &\ f(t) \z\bigl((t-t_0)R^{-1}\bigr),\\[0.2cm] 
f_R^{(2)}(t) := &\ f(t)  - f_R^{(1)}(t).
\end{align*}
For $f_R^{(1)}$ we use Theorem \ref{multi:thm} 
with $q = 1$, $n=2$, and a $\s\in (0, 1], \s <\g,$ such that $V_{\s, 1}<\infty$:
\begin{equation*}
\|D_\a(f_R^{(1)})\|_{\GS_1}\le C R^{\g-\s} \1 f_R^{(1)}\1_2 V_{\s, 1}(v, \tau). 
\end{equation*}
By \eqref{coeffscales:eq}, the coefficient $\CB(f_R^{(1)})$ 
satisfies the same bound. Note also that $\1 f_R^{(1)}\1_2\le C\1 f\1_2$, 
so that
\begin{align}\label{g1R:eq}
\frac{1}{V_{\s, 1}(v, \tau)}
\bigl|D_\a(f_R^{(1)})- \omega\CB({f_R^{(1)}})\bigr|
\le C \1 f\1_2 R^{\g-\s}.
\end{align}
Now, it is clear that  $f_R^{(2)}\in \plainC2(\R)$, 
so one can use formula \eqref{scale_asymp:eq} established in  
Part 1 of the proof:
\begin{equation*}
\underset{\a\to\infty, \ \a\tau_{\textup{\tiny inf}}\ge \a_0}
\lim \ \frac{1}{V_{\s, 1}(v, \tau)}
|\tr D_\a(f_R^{(2)}) - 
\omega\CB(f_R^{(2)})|  = 0. 
\end{equation*}
Together with \eqref{g1R:eq}, this equality gives
\begin{align*}
\limsup \ \frac{1}{V_{\s, 1}(v, \tau)}
&\ |\tr D_\a(f) - \omega\CB(f)|\\[0.2cm]
\le &\ 
\lim \ \frac{1}{V_{\s, 1}(v, \tau)}
|\tr D_\a(f_R^{(2)}) - \omega\CB(f_R^{(2)})|  + C \1 f\1_2 R^{\g-\s}\\[0.2cm]
\le &\  C\1 f\1_2 R^{\g-\s}. 
\end{align*}
Again, the limits above are taken as $\a\to\infty$, $\a\tau_{\textup{\tiny inf}}\ge \a_0$. 
Since $\s<\g$, and $R>0$ is arbitrary, the required asymptotics follow. 
\end{proof}

\begin{proof} [Proof of Theorem \ref{smalll_asymp:thm}]
 	 	Instead of $f$ we introduce for $\l>0$ the function 
 	\begin{equation*}
 	f^{(\l)}(t) := \l^{-\g} f(\l t), t\in \R.
 	\end{equation*}
 		It is clear that $\1 f\1_n = \1 f^{(\l)}\1_n$ for all $n$. 
 	As in the previous proof, without loss of generality we may assume that 
        $\|a\|_{\plainL\infty}\le 1/2$, so that the function $f^{(\l)}$ may be assumed 
        to be supported on the interval $[-1, 1]$. Note that 
 	\begin{equation*}
 	D_\a (\l a, I;f) = \l^\g D_\a(a,I; f^{(\l)}),
 	\ \ \CB(\l a; f) = \l^\g\CB(a; f^{(\l)}).
 	\end{equation*}
Thus the asymptotic formula \eqref{smalll_asymp:eq} is equivalent to the following relation:
	\begin{equation}\label{smalll_asymp1:eq}
	\underset{\a\to\infty, \a\tau_{\textup{\tiny inf}}\ge \a_0}
	\lim \frac{1}{ V_{\s, 1}(v, \tau)}
	\bigl(\tr D_\a(a, I; f^{(\l)}) - \omega\CB(a; f^{(\l)})\bigr)
	= 0. 
	\end{equation}
The further proof now follows essentially Step 2 of the proof of Theorem \ref{scale_asymp:thm}. 
As before, for brevity we use the notation $D_\a(f) := D_\a(a, I; f)$, $\CB(f) := \CB(a; f)$. 
 	
Let $\z\in\plainC\infty_0(\R)$ be a real-valued function, satisfying \eqref{zeta:eq}.
 	Represent 
 	\begin{align*}
 	f^{(\l)}= &\ g_R^{(1)}+ g_R^{(2)}, 0<R\le 1,\\[0.2cm]
 	g_R^{(1)}(t) := &\ f^{(\l)}(t) \z\bigl(tR^{-1}\bigr),\\[0.2cm] 
 	g_R^{(2)}(t) := &\ f^{(\l)}(t)  - g_R^{(1)}(t).
 	\end{align*}
Since the condition \eqref{tau_low:eq} is satisfied, for $g_R^{(1)}$ we may use Theorem \ref{multi:thm} 
 	with $q = 1$, $n=2$, and a $\s\in (0, 1], \s <\g,$ such that $V_{\s, 1}<\infty$:
 	\begin{equation*}
 	\|D_\a(g_R^{(1)})\|_{\GS_1}\le C R^{\g-\s} \1 g_R^{(1)}\1_2 V_{\s, 1}(v, \tau)
 	\le C R^{\g-\s} \1 f \1_2 V_{\s, 1}(v, \tau). 
 	\end{equation*}
  By \eqref{coeffscales:eq} the coefficient 
 	$\CB(g_R^{(1)})$ satisfies the same bound. 
 	It is clear that  $g_R^{(2)}\in \plainC\infty(\R)$, 
 	and by definition \eqref{Nn:eq},  
 	\begin{equation*}
 	N_n(g_R^{(2)}; r)\le C_{n, R, r} \1 f^{(\l)}\1_n
 	\le \tilde C_{n, R, r}\1 f\1_n, n = 1, 2, \dots,
 	\end{equation*}
 	for any $r >0$. 
 	Thus by \eqref{fn:eq},
 	\begin{align*}
 	|\tr D_\a(g_R^{(2)}) - \omega \CB(g_R^{(2)})|\le C N_n(g_R^{(2)}; 1+\l_0)
 	\a^{-m+1} V_{1, m}(v, \tau),
 	\end{align*}
 	with arbitrary $m\ge 3$.
Therefore, using \eqref{higher:eq} and arguing as in the proof of Theorem 
\ref{scale_asymp:thm}, we obtain
 	\begin{align*}
 	\limsup &\ \frac{1}{V_{\s, 1}(v, \tau)}
 	\ |\tr D_\a(f^{(\l)}) - \omega\CB(f^{(\l)})|\\[0.2cm]
 	\le &\ 
 	\lim \ \frac{1}{V_\s(v, \tau)}
 	|\tr D_\a(g_R^{(2)}) - \omega
 	\CB(g_R^{(2)})|  + C \1 f\1_2 R^{\g-\s}\\[0.2cm]
 	\le &\  C\1 f\1_2 R^{\g-\s}. 
 	\end{align*}
 	The limits above are taken as $\a\to\infty$, $\a\tau_{\textup{\tiny inf}}\ge \a_0$. 
 	Since $R>0$ is arbitrary, the required asymptotics \eqref{smalll_asymp1:eq} follow. 
 	As explained previously, this implies \eqref{smalll_asymp:eq}. 
 	
 	Proof of \eqref{smalll_eta1:eq}. We write 
 	\begin{equation*}
 	D_\a(\l a, I; \eta) = \l D_\a(a, I; \eta),\ 
 	\CB(\l a; \eta) = \l \CB(a; \eta).
 	\end{equation*}
 	Since the function $\eta$ satisfies Condition \ref{f:cond} for any $\g\in (\s, 1)$, 
 	the proclaimed asymptotic formula is a direct consequence of the formula 
 	\eqref{scale_asymp:eq} for the operator $D_\a(a, I; \eta)$. 
\end{proof}

Observe that the proof of Theorem \ref{smalll_asymp:thm} has only one step, 
in contrast to that of Theorem \ref{scale_asymp:thm}. Namely, 
in the former we do not prove that the sought asymptotics holds for arbitrary 
$f\in\plainC2(\R)$ since this would require approximating $f^{(\l)}$ 
with polynomials whose dependence on $\l$ would have to be explicitly 
controlled. We do not go into these difficulties.

\section{Proof of Theorem \ref{interval_res:thm}: the case of a single interval}\label{res1:sect}

We recall the notation \eqref{WH:eq} for 
the Wiener--Hopf operator: 
$W_\a(a;I) = \chi_I \op_\a(a) \,\chi_I$ with the 
notation $\L$ replaced by a subset $I\subset\mathbb R$. 
A central role in our argument plays the operator 
\begin{equation}\label{halpha:eq}
 H_\a(a, b; I) := W_\a(a b; I) - W_\a(a; I) W_\a(b; I) 
 = \chi_I \op_\a(a) \bigl(\mathds{1} - \chi_I\bigr) \op_\a(b) \chi_I.
\end{equation} 
with $\plainC{m}$-symbols $a = a(\xi)$ and $b = b(\xi)$. 
At the first step of the proof we 
assume that the set $I$ is just a bounded 
interval $(x_0, y_0)$ with $y_0-x_0\ge 1$. 

\subsection{Preliminary bounds} 
For any $z\in\R$  denote 
\begin{equation*}
\R_{z}^{(+)} := (z, \infty), \R_{z}^{(-)} := (-\infty, z), \ \ 
\chi_{z}^{(+)} := \chi_{(z, \infty)}, \chi_{z}^{(-)} := \chi_{(-\infty, z)}.
\end{equation*} 
We define 
\begin{equation*}
Z_\a(a, b; I)
:= H_\a(a, b; I) - H_\a(a, b; \R_{y_0}^{(-)}) - H_\a(a, b; \R_{x_0}^{(+)}). 
\end{equation*}
Most of the estimates for the introduced operators will follow from 
the next proposition, which is a consequence of \cite[Theorem 2.7]{Sob1}.

\begin{prop}\label{separate:prop}
Let $a$ be a symbol such that $\p^m_\xi a\in\plainL1(\R)$ with some $m\ge 3$. 
Let $z, t$ be numbers such that $z-t = \ell>0$. 
Then for any $\a > 0$,  we have
\begin{equation*}
\|\chi_t^{(-)} \op_\a(a) \chi_z^{(+)}\|_{\GS_1}
+ \|\chi_z^{(+)} \op_\a(a) \chi_t^{(-)}\|_{\GS_1}
\le C_m (\a\ell) ^{-m+1} \|\p_\xi^m a\|_{\plainL1}.
\end{equation*}
\end{prop}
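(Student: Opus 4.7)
The plan is to invoke \cite[Theorem 2.7]{Sob1} directly, after identifying the correct choice of sets. That theorem is the general trace-class estimate for Wiener--Hopf-type operators $\chi_{\Lambda_1} \op_\a(a) \chi_{\Lambda_2}$ sandwiched between indicator functions of sets $\Lambda_1, \Lambda_2\subset \R^d$ whose supports are separated by a distance $\ell$, and it furnishes a bound of the form $(\a \ell)^{-m+1}$ times norms of derivatives of $a$ when $d=1$. In our setting I would take $\Lambda_1 = \R_t^{(-)}$, $\Lambda_2 = \R_z^{(+)}$, whose separation is precisely $\ell = z - t$, and $q=1$; the assumption $\p_\xi^m a \in \plainL1(\R)$ with $m\ge 3$ supplies the regularity input in the hypotheses of that theorem.

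Even though the estimate is quoted, I would briefly indicate where the exponent comes from. The Schwartz kernel of $\op_\a(a)$ is the convolution kernel $\tfrac{\a}{\sqrt{2\pi}}\int e^{i\a\xi(x-y)} a(\xi)\,d\xi$. On the support $\{x<t<z<y\}$ one has $y-x\ge \ell>0$, so the phase $e^{i\a\xi(x-y)}$ is non-stationary in $\xi$; integrating by parts $m$ times in $\xi$ replaces $a$ by $\p_\xi^m a$ at the cost of the factor $(i\a(x-y))^{-m}$. This yields pointwise control of the kernel by $C\a (\a|y-x|)^{-m}\|\p_\xi^m a\|_{\plainL1}$. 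The passage from this pointwise decay to a genuine $\GS_1$-bound with exponent $-m+1$ (one power of $\a\ell$ worse than the pointwise bound) is the content of Theorem 2.7 and is achieved by a Schur/Hölder-type factorization through a Hilbert--Schmidt pair together with a dyadic decomposition of $\R_t^{(-)}$ and $\R_z^{(+)}$ summed via the $q$-triangle inequality \eqref{qtriangle:eq}; the lost power of $\a\ell$ corresponds exactly to integration over the one-dimensional distance variable.

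The second summand $\|\chi_z^{(+)} \op_\a(a)\chi_t^{(-)}\|_{\GS_1}$ is handled identically, either by applying \cite[Theorem 2.7]{Sob1} with the roles of $\Lambda_1, \Lambda_2$ swapped, or by taking adjoints: since $\bigl(\chi_t^{(-)} \op_\a(a)\chi_z^{(+)}\bigr)^* = \chi_z^{(+)} \op_\a(\bar a)\chi_t^{(-)}$ and the right-hand side of the claimed inequality depends on $a$ only through $\|\p_\xi^m a\|_{\plainL1}$, which is invariant under complex conjugation, the same bound applies.

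The main (and really the only) obstacle is that the trace-class upgrade of the pointwise kernel estimate is non-trivial and not easily derived on the spot; this is exactly the content packaged in \cite[Theorem 2.7]{Sob1}, and I would cite rather than reprove it, since reworking that argument would duplicate non-negligible material from the cited source.
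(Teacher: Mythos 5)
Your proposal is correct and follows essentially the same route as the paper: both proofs reduce the claim to \cite[Theorem 2.7]{Sob1}, with the second summand handled by symmetry/adjoints. The only cosmetic difference is that the paper extracts the factor $(\a\ell)^{1-m}$ by first passing, via an explicit dilation, to the unit-scale operator $\chi_0^{(-)}\op_1(b)\chi_1^{(+)}$ with $b(\xi)=a\bigl((\a\ell)^{-1}\xi\bigr)$ and then computing $\|\p^m b\|_{\plainL1}=(\a\ell)^{1-m}\|\p^m a\|_{\plainL1}$, rather than reading the $(\a\ell)$-dependence off the cited statement directly.
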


\begin{proof} 
The operator $\chi_t^{(-)}\op_\a(a) \chi_z^{(+)}$ is trivially unitarily equivalent 
to $\chi_0^{(-)}\op_1(b) \chi_1^{(+)}$ with the symbol $b$ defined as
$b(\xi) := a\bigl((\a\ell)^{-1}\xi\bigr)$. By \cite[Theorem 2.7]{Sob1}, 
\begin{equation*}
\|\chi_0^{(-)}\op_1(b) \chi_1^{(+)}\|_{\GS_1}\le C_m\| \p^m b\|_{\plainL1}
\le C_m (\a\ell)^{1-m}\| \p^m a\|_{\plainL1},
\end{equation*}
for any $m\ge 3$. This is the required estimate. 
\end{proof}

\begin{remark}
Theorem 2.7 in \cite{Sob1} contains two misprints:
the number $n$ should be defined by the formula 
$n := \lceil 2q^{-1}\rceil+1$, 
and the main estimate of the 
Theorem should have the factor $r^{2q^{-1}-m}$ instead 
of $r^{q^{-1}-m}$. 
\end{remark}

Now we proceed to estimating trace norms of the operators $H_\a$, $Z_\a$ 
introduced above. 
Recall that $M^{(m)}(a, b)$ is defined in \eqref{M:eq}. 

\begin{lem}\label{Z:lem} Let $I=(x_0,y_0)$ with $y_0-x_0\ge 1$.  Then 
for $m\ge 3$ and any $\a > 0$ we have  
\begin{equation}\label{Z:eq}
\|Z_\a(a, b; I)\|_{\GS_1} \le  C_m \a^{-m+1} 
M^{(m)}(a, b),
\end{equation}
and 
\begin{align}
\| H_\a(a, b; \R_{y_0}^{(-)})&\  H_\a(a, b; \R_{x_0}^{(+)})\|_{\GS_1}
+ \| H_\a(a, b; \R_{x_0}^{(+)}) 
H_\a(a, b; \R_{y_0}^{(-)})\|_{\GS_1}\notag\\
\le &\  
C_m \a^{-m+1} \|a\|_{\plainL\infty} 
\|b\|_{\plainL\infty}M^{(m)}(a, b).
\label{HH:eq}
%
\end{align}
\end{lem}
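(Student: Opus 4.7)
The plan is to decompose everything into elementary blocks of the form $\chi_z^{(\pm)} \op_\a(c) \chi_t^{(\mp)}$ and invoke Proposition \ref{separate:prop} whenever the two projections are separated by a distance of order $\ell := y_0-x_0 \ge 1$. Write $A := \op_\a(a)$, $B := \op_\a(b)$, and set $P_0 := \chi_{x_0}^{(-)}$, $P_I := \chi_I$, $P_+ := \chi_{y_0}^{(+)}$, so that $\mathds{1} = P_0 + P_I + P_+$ almost everywhere, $\chi_{y_0}^{(-)} = P_0 + P_I$, $\chi_{x_0}^{(+)} = P_I + P_+$, and $\mathds{1}-\chi_I = P_0 + P_+$.

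For \eqref{Z:eq}, I would expand the three operators defining $Z_\a$:
\begin{align*}
H_\a(a,b;I) &= P_I A P_0 B P_I + P_I A P_+ B P_I, \\
H_\a(a,b;\R_{y_0}^{(-)}) &= (P_0+P_I) A P_+ B (P_0+P_I), \\
H_\a(a,b;\R_{x_0}^{(+)}) &= (P_I+P_+) A P_0 B (P_I+P_+).
\end{align*}
Expanding the last two products, the diagonal pieces $P_I A P_+ B P_I$ and $P_I A P_0 B P_I$ cancel $H_\a(a,b;I)$ exactly, leaving six cross terms. Every one of them contains at least one factor of the form $P_0 A P_+$, $P_+ A P_0$, $P_0 B P_+$, or $P_+ B P_0$; by Proposition \ref{separate:prop} each such factor has $\GS_1$-norm bounded by $C_m(\a\ell)^{-m+1}\|\p_\xi^m a\|_{\plainL1}$ or its $b$-analogue. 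Bounding the remaining $\op_\a(\cdot)$ factor by its $\plainL\infty$-norm and summing the six contributions yields a total $\GS_1$-bound of $C_m(\a\ell)^{-m+1} M^{(m)}(a,b) \le C_m\a^{-m+1} M^{(m)}(a,b)$, using $\ell\ge 1$.

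For \eqref{HH:eq}, compute the product directly. Pairwise orthogonality of $P_0, P_I, P_+$ collapses the inner product $(P_0+P_I)(P_I+P_+) = P_I$, so
\begin{equation*}
H_\a(a,b;\R_{y_0}^{(-)}) H_\a(a,b;\R_{x_0}^{(+)}) = (P_0+P_I) A P_+ B P_I A P_0 B (P_I+P_+).
\end{equation*}
It then suffices to estimate the inner sandwich $P_+ B P_I A P_0$ in $\GS_1$ and bound the outer $A, B$ in operator norm by $\|a\|_{\plainL\infty}, \|b\|_{\plainL\infty}$. The obstacle here is that $P_+$ is only \emph{adjacent} to $P_I$ at $y_0$ (and similarly $P_I$ to $P_0$ at $x_0$), so Proposition \ref{separate:prop} does not apply directly. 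The remedy is to split $P_I = P_L + P_R$ at the midpoint $m_0 := (x_0+y_0)/2$: then $P_+ B P_L$ and $P_R A P_0$ each have a gap of length $\ell/2 \ge 1/2$, so in each of the two summands $P_+ B P_L A P_0$ and $P_+ B P_R A P_0$ there is one block to which Proposition \ref{separate:prop} applies with the full rate, the other factor being bounded by an $\plainL\infty$-norm. This delivers $\|P_+ B P_I A P_0\|_{\GS_1} \le C_m\a^{-m+1} M^{(m)}(a,b)$, and hence the first part of \eqref{HH:eq}. The reversed product is handled identically, with the analogous collapse $\chi_{x_0}^{(+)}\chi_{y_0}^{(-)} = P_I$ and the roles of $a$ and $b$ interchanged.

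The main obstacle is this adjacency issue in the second bound: a naive application of Proposition \ref{separate:prop} fails because $P_+$ abuts $P_I$ with no gap. The midpoint-splitting trick turns one unseparated block into two blocks each with gap $\ell/2$, at a cost absorbed into the constant $C_m$. The first bound, by contrast, is essentially a bookkeeping exercise: one checks that after the cancellation every surviving cross term inherits the full separation $\ell$ between two of its projections so that Proposition \ref{separate:prop} applies with the optimal rate.
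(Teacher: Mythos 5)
Your argument is correct and is essentially the paper's own proof: the same cancellation of the diagonal blocks $P_I A P_\pm B P_I$ reduces $Z_\a$ to cross terms each containing a projection pair separated by $y_0-x_0\ge 1$, and the midpoint split of $\chi_I$ at $(x_0+y_0)/2$ is exactly the device the paper uses (with $z_0:=(x_0+y_0)/2$) to handle the adjacency issue in \eqref{HH:eq}. No gaps.
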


\begin{proof}
We denote $A := \op_\a(a), B := \op_\a(b)$. 
Clearly, the operator $Z := Z_\a(a, b; I)$ splits into the sum
\begin{align*}
Z = &\ Z^{(1)} + Z^{(2)},\\[0.2cm]
Z^{(1)} := &\ \chi_I A 
\chi_{y_0}^{(+)} B \chi_{I} 
-  \chi_{y_0}^{(-)} 
A \chi_{y_0}^{(+)} B \chi_{y_0}^{(-)}, \\[0.2cm]
Z^{(2)} := &\ \chi_I 
A \chi_{x_0}^{(-)} B \chi_I
- \chi_{x_0}^{(+)} A  
\chi_{x_0}^{(-)} B  \chi_{x_0}^{(+)}.
\end{align*}
Let us rewrite
\begin{align*} 
Z^{(1)} = &\ -\chi_{x_0}^{(-)} A 
\chi_{y_0}^{(+)} B \chi_I 
-  \chi_{y_0}^{(-)} A  
\chi_{y_0}^{(+)}B  \chi_{x_0}^{(-)}, \\[0.2cm]
Z^{(2)} = &\ - \chi_{y_0}^{(+)}
A \chi_{x_0}^{(-)} B \chi_I
- \chi_{x_0}^{(+)} A  \chi_{x_0}^{(-)}
B \chi_{y_0}^{(+)}.
\end{align*}
Then, by Proposition \ref{separate:prop},
\begin{align*} 
\|Z^{(1)}\|_{\GS_1} 
\le &\ \|\chi_{x_0}^{(-)} A 
\chi_{y_0}^{(+)} \|_{\GS_1} \| B\| 
+ \|A\| 
\ \| \chi_{y_0}^{(+)}B  \chi_{x_0}^{(-)}\|_{\GS_1}\\[0.2cm]
\le &\ C_m \a^{-m+1} \bigl(\|\p_\xi^m a\|_{\plainL1} \|b\|_{\plainL\infty} 
+ \|a\|_{\plainL\infty}\|\p_\xi^m b\|_{\plainL1}\bigr)
=  C_m \a^{-m+1} M^{(m)}(a, b).
\end{align*}
Adding it up with the same bound for 
the operator $Z^{(2)}$ 
completes the proof of \eqref{Z:eq} for $Z(a, b; I)$.

Proof of \eqref{HH:eq}:   Let $z_0 := (x_0+y_0)/2$, so that the trace norm of 
the operator
\begin{equation*}
H_\a(a, b; \R_{y_0}^{(-)}) H_\a(a, b; \R_{x_0}^{(+)})
= \chi_{y_0}^{(-)} A \chi_{y_0}^{(+)} B \chi_I
A \chi_{x_0}^{(-)} B \chi_{x_0}^{(+)}
\end{equation*}
can be estimated by
\begin{equation*}
\| A\|^2 \ \|B\| \ \|
\chi_{y_0}^{(+)} B \chi_{(x_0, z_0)} \|_{\GS_1} 
 + \|A\|\ \| B \|^2\ \|\chi_{(z_0, y_0)}
A \chi_{x_0}^{(-)}\|_{\GS_1}.
\end{equation*}
Now Proposition \ref{separate:prop} leads to 
the bound \eqref{HH:eq} for the first term on the left-hand side of \eqref{HH:eq}.
In the same way one proves the same bound for the second term  
on the left-hand side. 
\end{proof}

\begin{lem}\label{TT:lem}
Let the conditions of Lemma \ref{Z:lem} be satisfied, 
and let $g\in\plainC{m}(\R)$ be another symbol. Then 
\begin{align}\label{TT1:eq}
\big\| \bigl[W_\a(a; I) - W_{\a}(a; &\ \R_{x_0}^{(+)})\bigr]
H_\a(b, g; \R_{x_0}^{(+)})\big\|_{\GS_1}\notag \\[0.2cm]
+ \big\| & \bigl[W_\a(a; I) - W_{\a}(a; \R_{y_0}^{(-)})\bigr]
H_\a(b, g; \R_{y_0}^{(-)})\big\|_{\GS_1}\notag\\[0.2cm] 
&\ \quad\quad\quad \quad 
\le C_m \a^{-m+1} \|g\|_{\plainL\infty}M^{(m)}(a, b).
\end{align} 
\end{lem}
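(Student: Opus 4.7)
By symmetry I focus on the first summand of \eqref{TT1:eq}. The guiding idea is to expand $W_\a(a;I) - W_\a(a;\R_{x_0}^{(+)})$ in terms of the characteristic functions $P = \chi_I$ and $Q_+ = \chi_{y_0}^{(+)}$, then multiply by $H_\a(b,g;\R_{x_0}^{(+)}) = \chi_{x_0}^{(+)}\op_\a(b)\chi_{x_0}^{(-)}\op_\a(g)\chi_{x_0}^{(+)}$, and in each resulting summand locate a subfactor $\chi_s^{(\pm)}\op_\a(\cdot)\chi_t^{(\mp)}$ whose separation $|s-t|$ is at least $(y_0-x_0)/2 \ge 1/2$, so that Proposition \ref{separate:prop} applies. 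The scalar factor $\op_\a(g)$ on the far right will be absorbed using only $\|\op_\a(g)\| \le \|g\|_{\plainL\infty}$.

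\textbf{Step 1 (algebraic expansion).} Writing $\chi_{x_0}^{(+)} = P + Q_+$ and expanding, I obtain
\begin{equation*}
W_\a(a;I) - W_\a(a;\R_{x_0}^{(+)}) = -P\op_\a(a)Q_+ - Q_+\op_\a(a)P - Q_+\op_\a(a)Q_+.
\end{equation*}
Multiplying on the right by $H_\a(b,g;\R_{x_0}^{(+)})$ and using $P\chi_{x_0}^{(+)} = P$, $Q_+\chi_{x_0}^{(+)} = Q_+$ gives three summands. In the first and the third, the rightmost pseudodifferential block already contains $Q_+\op_\a(b)\chi_{x_0}^{(-)}$, whose two characteristic functions are separated by $y_0 - x_0 \ge 1$. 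By Proposition \ref{separate:prop}, $\|Q_+\op_\a(b)\chi_{x_0}^{(-)}\|_{\GS_1} \le C_m \a^{-m+1}\|\p_\xi^m b\|_{\plainL1}$; combined with the trivial norm bounds for $\op_\a(a)$ and $\op_\a(g)$, each of these two summands is controlled by $C_m\a^{-m+1}\|a\|_{\plainL\infty}\|\p_\xi^m b\|_{\plainL1}\|g\|_{\plainL\infty}$, which is bounded by $C_m\a^{-m+1}\|g\|_{\plainL\infty}M^{(m)}(a,b)$.

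\textbf{Step 2 (the main obstacle).} The middle summand $-Q_+\op_\a(a)P\op_\a(b)\chi_{x_0}^{(-)}\op_\a(g)\chi_{x_0}^{(+)}$ has no immediate separation: $Q_+$ and $P = \chi_{(x_0,y_0)}$ abut at $y_0$, and $P$ and $\chi_{x_0}^{(-)}$ abut at $x_0$. I resolve this by splitting $P = \chi_{(x_0,z_0)} + \chi_{(z_0,y_0)}$ at the midpoint $z_0 = (x_0+y_0)/2$. In the resulting first piece the gap between $Q_+$ and $\chi_{(x_0,z_0)}$ is at least $(y_0-x_0)/2 \ge 1/2$, so Proposition \ref{separate:prop} applied to the $a$-factor yields a $\|\p_\xi^m a\|_{\plainL1}$ contribution; in the second piece the gap between $\chi_{(z_0,y_0)}$ and $\chi_{x_0}^{(-)}$ is of the same size, and applying Proposition \ref{separate:prop} to the $b$-factor yields a $\|\p_\xi^m b\|_{\plainL1}$ contribution. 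Bounding the remaining operator factors by their $\plainL\infty$-norms and recalling the definition of $M^{(m)}(a,b)$, I get $\|Q_+\op_\a(a)P\op_\a(b)\chi_{x_0}^{(-)}\|_{\GS_1} \le C_m\a^{-m+1}M^{(m)}(a,b)$, and then $\|\op_\a(g)\| \le \|g\|_{\plainL\infty}$ absorbs the last factor.

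\textbf{Step 3 (conclusion).} Summing the bounds from Steps 1 and 2 proves the estimate for the first term in \eqref{TT1:eq}. The estimate for the second term, involving $\R_{y_0}^{(-)}$, is obtained by the mirror-image argument with $P_- = \chi_{x_0}^{(-)}$ replaced by $\chi_{y_0}^{(+)}$ and the roles of $x_0$ and $y_0$ interchanged. The hypothesis $y_0 - x_0 \ge 1$ is used only to guarantee the midpoint gap $(y_0 - x_0)/2 \ge 1/2 > 0$; its precise value is harmless because Proposition \ref{separate:prop} produces the factor $(\a\ell)^{-m+1}$ which is absorbed into the constant $C_m$. The sole real difficulty is the lack of a built-in separation in the middle summand, and the midpoint splitting is the essential device that overcomes it.
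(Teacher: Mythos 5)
Your proof is correct and follows essentially the same route as the paper's: the same expansion of $W_\a(a;I)-W_\a(a;\R_{x_0}^{(+)})$ in terms of $\chi_I$ and $\chi_{y_0}^{(+)}$ (the paper merely groups your second and third summands into $\chi_{y_0}^{(+)}\op_\a(a)\chi_{x_0}^{(+)}$), the same midpoint splitting at $z_0=(x_0+y_0)/2$ for the term lacking built-in separation, and the same application of Proposition \ref{separate:prop} with the remaining factors bounded in operator norm.
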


\begin{proof}
With $A := \op_\a(a), B := \op_\a(b)$ we write
\begin{equation}\label{TT3:eq}
W_{\a}(a; \R_{x_0}^{(+)}) - W_\a(a; I)
= \chi_I A \chi_{y_0}^{(+)} 
+ \chi_{y_0}^{(+)} A\chi_{x_0}^{(+)}
\end{equation}
and estimate
\begin{equation*}
\|\chi_I A \chi_{y_0}^{(+)} H_\a(b, g; \R_{x_0}^{(+)})\|_{\GS_1}
\le \|a\|_{\plainL\infty} \|g\|_{\plainL\infty}
\|\chi_{y_0}^{(+)} 
 B \chi_{x_0}^{(-)}\|_{\GS_1}.
\end{equation*}
For the second term on the right-hand side of \eqref{TT3:eq} 
let $z_0 := (x_0+y_0)/2$. Then
\begin{align*}
\| \chi_{y_0}^{(+)} A\chi_{x_0}^{(+)}  
&\ H_\a(b, g; \R_{x_0}^{(+)}\|_{\GS_1} \\
\le &\ \|\chi_{y_0}^{(+)} A\chi_{z_0}^{(-)}\|_{\GS_1} \|b\|_{\plainL\infty} 
\|g\|_{\plainL\infty} 
+ \|a\|_{\plainL\infty} \|g\|_{\plainL\infty}
\|\chi_{z_0}^{(+)} B \chi_{x_0}^{(-)}\|_{\GS_1}.
\end{align*}
Now Proposition \ref{separate:prop} leads to inequality \eqref{TT1:eq} for 
the first term on the left-hand side of \eqref{TT1:eq}. 
The remaining inequality is derived in the same way. 
\end{proof}
 
\subsection{Estimates for $D_\a(a, I; r_z)$: one-dimensional case}
We apply definition \eqref{halpha:eq} to the symbols $ a_z:=a-z$ and $a_z^{-1}$. 
Now we assume that $a$ is a real-valued symbol and $\d(z, a) >0$, 
see \eqref{distz:eq} for definition. Thus we obtain 
(replace $\L$ by $I$)
 \begin{equation*}
 \chi_I -  W_\a(a_z; I) W_\a\bigl(a_z^{-1}; I\bigr) 
 = H_\a(a_z, a_z^{-1}; I).
 \end{equation*}
Clearly, both operators $W_\a(a_z; I)$ and $W_\a(a_z^{-1}; I)$ 
are invertible on $\plainL2(I)$ and 
\begin{equation*}
\bigl\| \left.W_\a(a_z; I)\right|_I^{-1}\bigr\|\le \frac{1}{\d(z, a)},\ 
\bigl\|\left.W_\a(a_z^{-1}; I)\right|_I^{-1}\bigr\|
\le |z|+ \|a\|_{\plainL\infty}.
\end{equation*} 
Thus  $\mathds{1}-H_\a(a_z, a_z^{-1}; I)$ is invertible on $\plainL2(\R)$ and 
\begin{equation*}
\bigl(\mathds{1}-H_\a(a_z, a_z^{-1}; I)\bigr)^{-1} \chi_I
= \left.W_\a(a_z^{-1}; I)\right|_I^{-1}
\left.W_\a(a_z; I)\right|_I^{-1},
\end{equation*}
with the bound
\begin{equation}\label{norm_bound_h:eq}
\|\bigl(\mathds{1}-H_\a(a_z, a_z^{-1}; I)\bigr)^{-1} \|
\le \frac{ |z|+ \|a\|_{\plainL\infty}}{\d(z, a)}.
\end{equation}
As a consequence,
\begin{align}
\bigl(W_\a(a; I) - z\bigr)^{-1} \chi_I - &\ W_\a\bigl(a_z^{-1};I\bigr)\notag\\[0.2cm]
=  &\ W_\a\bigl(a_z^{-1};I\bigr) 
\biggl[
\left.W_\a(a_z^{-1}; I)\right|_I^{-1}
\left.W_\a(a_z; I)\right|_I^{-1} - \chi_I
\biggr] \notag\\[0.2cm]
= &\ W_\a\bigl(a_z^{-1};I\bigr) H_\a(a_z, a_z^{-1}; I)
\bigl[\mathds{1}-H_\a(a_z, a_z^{-1}; I)\bigr]^{-1}.\label{W_rep:eq}
\end{align} 
Let us analyse the part of the right-hand side which contains $H_a$.

\begin{lem}\label{HH:lem}
Let $I = (x_0, y_0)$, and let $y_0-x_0\ge 1$.
Denote
\begin{equation*}
 H_\a := H_\a(a_z, a_z^{-1}; I), \ 
H^{(1)}_\a := H_\a(a_z, a_z^{-1}; \R_{x_0}^{(+)}), \ 
H^{(2)}_\a := H_\a(a_z, a_z^{-1}; \R_{y_0}^{(-)}). 
\end{equation*}
Then for any $\a>0$ and any $m\ge 3$,
\begin{align*}
\big\| H_\a&\ (\mathds{1} - H_\a)^{-1} 
- H^{(1)}_\a(\mathds{1} - H^{(1)}_\a)^{-1}
- H^{(2)}_\a(\mathds{1} - H^{(2)}_\a)^{-1}\big\|_{\GS_1}\\
\le &\ 
C_m \a^{-m+1} \biggl(\frac{|z| + \| a\|_{\plainL\infty}}{\d(z, a)}\biggr)^3 
 M^{(m)}(a_z, a_z^{-1}).
\end{align*}
\end{lem}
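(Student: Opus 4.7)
The plan is to reduce the claimed trace-norm bound to an estimate on $\|R - R_1 - R_2 + \mathds{1}\|_{\GS_1}$, where $R := (\mathds{1} - H_\a)^{-1}$ and $R_j := (\mathds{1} - H^{(j)}_\a)^{-1}$ for $j=1,2$. These three resolvents exist and, by \eqref{norm_bound_h:eq} applied to $I$, to $\R^{(+)}_{x_0}$, and to $\R^{(-)}_{y_0}$ respectively, satisfy $\|R\|, \|R_1\|, \|R_2\| \le C_z$, where $C_z := (|z|+\|a\|_{\plainL\infty})/\d(z,a)$; similarly $\|H^{(j)}_\a\| \le \|a_z\|_{\plainL\infty}\|a_z^{-1}\|_{\plainL\infty} \le C_z$. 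Since $H_\a R = R - \mathds{1}$ and $H^{(j)}_\a R_j = R_j - \mathds{1}$, the operator inside the $\GS_1$-norm in the statement of the Lemma is exactly $R - R_1 - R_2 + \mathds{1}$.

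I would then exploit the decomposition $H_\a = H^{(1)}_\a + H^{(2)}_\a + Z_\a$, with $Z_\a := Z_\a(a_z, a_z^{-1}; I)$ small in $\GS_1$ by Lemma \ref{Z:lem}. Writing $\mathds{1} - H_\a = (\mathds{1} - H^{(1)}_\a) - (H^{(2)}_\a + Z_\a)$, the resolvent identity yields $R - R_1 = R_1(H^{(2)}_\a + Z_\a) R$; combined with $R_2 - \mathds{1} = H^{(2)}_\a R_2$ this gives
\begin{equation*}
R - R_1 - R_2 + \mathds{1} = \bigl(R_1 H^{(2)}_\a R - H^{(2)}_\a R_2\bigr) + R_1 Z_\a R.
\end{equation*}
Then I would expand $R_1 = \mathds{1} + R_1 H^{(1)}_\a$ in the first term and substitute the companion identity $R = R_2 + R_2(H^{(1)}_\a + Z_\a) R$ into the resulting occurrence of $H^{(2)}_\a R$. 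After the two copies of $H^{(2)}_\a R_2$ cancel, this produces the four-term expansion
\begin{equation*}
R - R_1 - R_2 + \mathds{1} = H^{(2)}_\a R_2 H^{(1)}_\a R + H^{(2)}_\a R_2 Z_\a R + R_1 H^{(1)}_\a H^{(2)}_\a R + R_1 Z_\a R.
\end{equation*}

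Each of the four summands is then bounded in trace norm using Lemma \ref{Z:lem} and the commutation $H^{(j)}_\a R_j = R_j H^{(j)}_\a$. In the first two terms the commutation puts the small $\GS_1$ factor (either $H^{(2)}_\a H^{(1)}_\a$ or $Z_\a$) between two bounded resolvents; the third term is already of this form. Using the bounds $\|H^{(i)}_\a H^{(j)}_\a\|_{\GS_1} \le C_m \a^{-m+1} \|a_z\|_{\plainL\infty}\|a_z^{-1}\|_{\plainL\infty} M^{(m)}(a_z, a_z^{-1}) \le C_m \a^{-m+1} C_z M^{(m)}(a_z, a_z^{-1})$ and $\|Z_\a\|_{\GS_1} \le C_m \a^{-m+1} M^{(m)}(a_z, a_z^{-1})$ from Lemma \ref{Z:lem}, each of the first three summands is bounded by $C_m \a^{-m+1} C_z^3 M^{(m)}(a_z, a_z^{-1})$; the fourth, $R_1 Z_\a R$, is bounded by $C_m \a^{-m+1} C_z^2 M^{(m)}(a_z, a_z^{-1})$ and is absorbed since $C_z \ge 1$.

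The main obstacle is careful book-keeping of the $C_z$-powers. The more naive route through the single resolvent identity $R - R_2 R_1 = R(H^{(1)}_\a H^{(2)}_\a + Z_\a) R_2 R_1$ combined with $R_2 R_1 - \mathds{1} - T_1 - T_2 = T_2 T_1$ would instead produce $C_z^4$, because the pairing $\|R\|\,\|R_2 R_1\|$ alone costs $C_z^3$ while the $\GS_1$-factor $\|H^{(1)}_\a H^{(2)}_\a\|_{\GS_1}$ contributes an additional $C_z$. The asymmetric four-term expansion above is designed precisely so that, thanks to the commutation of each $H^{(j)}_\a$ with its own resolvent $R_j$, at most two resolvent norms ever flank each small-$\GS_1$ factor, yielding the optimal $C_z^3$.
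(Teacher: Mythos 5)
Your proof is correct and follows essentially the same route as the paper: both rest on the splitting $H_\a = H^{(1)}_\a + H^{(2)}_\a + Z_\a$, the resolvent identity, the commutation of each $H^{(j)}_\a$ with its own resolvent, and the trace-norm inputs \eqref{Z:eq}, \eqref{HH:eq} and \eqref{norm_bound_h:eq}, arriving at the same $\bigl((|z|+\|a\|_{\plainL\infty})/\d(z,a)\bigr)^3$ factor. The only difference is cosmetic: you recast the target as $R-R_1-R_2+\mathds{1}$ and derive a single four-term identity, whereas the paper replaces $H^{(j)}_\a(\mathds{1}-H_\a)^{-1}$ by $H^{(j)}_\a(\mathds{1}-H^{(j)}_\a)^{-1}$ term by term.
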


\begin{proof} 
We use the representation 
\begin{equation*}
H_\a = H^{(1)}_\a + H^{(2)}_\a 
+ Z_\a, \ Z_\a := Z_\a(a_z, a_z^{-1}; I).
\end{equation*}
The required bound for 
$ Z_\a(1-H_\a)^{-1}$  
follows from \eqref{Z:eq} 
and \eqref{norm_bound_h:eq}. 
Now, by the resolvent identity, 
\begin{align*}
\big\|H_\a^{(1)}(\mathds{1}-H_\a)^{-1} 
- &\ H_\a^{(1)}(\mathds{1}-H^{(1)}_\a)^{-1}\big\|_{\GS_1}
\le \|(\mathds{1}-H^{(1)}_\a)^{-1}\|\\[0.2cm]
 &\ \times \biggl[\|H^{(1)}_\a H^{(2)}_\a \|_{\GS_1}
+ \|H_\a^{(1)}\|\ \|Z_\a\|_{\GS_1} \biggr]
   \|(\mathds{1}-H_\a)^{-1}\|.
\end{align*}
The required bound for this operator follows from \eqref{norm_bound_h:eq}, 
\eqref{Z:eq}  and  \eqref{HH:eq}. 
 \end{proof}

\begin{lem} \label{red_resolv:lem}
For $z\in\mathbb C$ let $g$ be the function defined as $g(\l) := r_z(\l) := (\l -z)^{-1}$ for
$\l\in\R$. Also, with the symbol $a$ as above, let $a_z := a - z$. Then for any $\a>0$, 
\begin{align*}
\bigl\| D_\a(a, I; r_z) - 
D_\a(a, \R_{y_0}^{(-)}; r_z) 
- &\ D_\a(a, \R_{x_0}^{(+)}; r_z)\bigr\|_{\GS_1}\\[0.2cm]
\le  &\ C_m  
\a^{-m+1} \frac{1}{ \d(z, a)}
\biggl(\frac{|z|+\|a\|_{\plainL\infty}}{\d(z, a)}\biggr)^3 M^{(m)}(a_z, a_z^{-1}).
\end{align*}
\end{lem}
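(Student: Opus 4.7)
The plan is to use the resolvent identity \eqref{W_rep:eq} to rewrite each of the three operators $D_\a(\,\cdot\,;r_z)$ as a product $W_\a(a_z^{-1};\,\cdot\,)\,H_\a\,(\mathds{1}-H_\a)^{-1}$, and then to split the difference into three error terms, each of which is controlled by a lemma proved earlier in this section.

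Applying \eqref{W_rep:eq} and using that $\chi_I W_\a(a_z^{-1};I)=W_\a(a_z^{-1};I)$, I obtain
\begin{equation*}
D_\a(a,I;r_z) \;=\; W_\a(a_z^{-1};I)\,H_\a\,(\mathds{1}-H_\a)^{-1}.
\end{equation*}
The same derivation produces the analogous identities on the half-lines $\R_{y_0}^{(-)}$ and $\R_{x_0}^{(+)}$ (with $H_\a^{(2)}$ and $H_\a^{(1)}$ in place of $H_\a$). The bound \eqref{norm_bound_h:eq} also applies there, since its proof only uses invertibility of the compressions of $W_\a(a_z;\,\cdot\,)$ and $W_\a(a_z^{-1};\,\cdot\,)$, which holds verbatim for these half-lines.

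I then add and subtract the combination $W_\a(a_z^{-1};I)\bigl[H_\a^{(1)}(\mathds{1}-H_\a^{(1)})^{-1}+H_\a^{(2)}(\mathds{1}-H_\a^{(2)})^{-1}\bigr]$ to write the difference under estimation as the sum of
\begin{equation*}
W_\a(a_z^{-1};I)\Bigl[H_\a(\mathds{1}-H_\a)^{-1}-H_\a^{(1)}(\mathds{1}-H_\a^{(1)})^{-1}-H_\a^{(2)}(\mathds{1}-H_\a^{(2)})^{-1}\Bigr]
\end{equation*}
and two terms of the form $\bigl[W_\a(a_z^{-1};I)-W_\a(a_z^{-1};\R_{y_0}^{(-)})\bigr]H_\a^{(2)}(\mathds{1}-H_\a^{(2)})^{-1}$ (respectively with $\R_{x_0}^{(+)}$ and $H_\a^{(1)}$).

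For the first piece I combine $\|W_\a(a_z^{-1};I)\|\le \|a_z^{-1}\|_{\plainL\infty}=\d(z,a)^{-1}$ with Lemma \ref{HH:lem}, which yields exactly the bound claimed in the statement. For each of the two remaining pieces I apply Lemma \ref{TT:lem} with the substitution $a\mapsto a_z^{-1},\ b\mapsto a_z,\ g\mapsto a_z^{-1}$, giving a trace-norm bound of order $C_m\a^{-m+1}\d(z,a)^{-1}M^{(m)}(a_z,a_z^{-1})$; the extra factor $(\mathds{1}-H_\a^{(j)})^{-1}$ is absorbed via $\|(\mathds{1}-H_\a^{(j)})^{-1}\|\le(|z|+\|a\|_{\plainL\infty})/\d(z,a)$ from \eqref{norm_bound_h:eq}. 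Since $0\in[-\|a\|_{\plainL\infty},\|a\|_{\plainL\infty}]$ we have $\d(z,a)\le|z|\le|z|+\|a\|_{\plainL\infty}$, so the quotient $(|z|+\|a\|_{\plainL\infty})/\d(z,a)$ is at least $1$ and the TT-contributions are majorised by the cube of the same quantity that already appears in the HH-piece.

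No individual step is hard: the substantive estimates are already encapsulated in Lemmas \ref{Z:lem}, \ref{TT:lem}, and \ref{HH:lem}. The only point that requires care is arranging the additive split so that every residual matches the precise structure needed by \eqref{TT1:eq} (a difference of two $W_\a$'s paired with an $H_\a$ localised to the \emph{same} half-line) or by Lemma \ref{HH:lem} (the ``interval minus two half-lines'' combination of $H_\a(\mathds{1}-H_\a)^{-1}$'s); this is the only aspect of the argument where one must think rather than merely compute.
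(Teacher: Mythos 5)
Your proposal is correct and follows essentially the same route as the paper: the identity \eqref{W_rep:eq} for each of the three sets, the add-and-subtract of $W_\a(a_z^{-1};I)\bigl[H_\a^{(1)}(\mathds{1}-H_\a^{(1)})^{-1}+H_\a^{(2)}(\mathds{1}-H_\a^{(2)})^{-1}\bigr]$, Lemma \ref{HH:lem} for the main piece, and Lemma \ref{TT:lem} combined with \eqref{norm_bound_h:eq} for the two residual pieces. The only difference is that you spell out the substitution in Lemma \ref{TT:lem} and the domination of the TT-contributions by the cubed quotient, details the paper leaves implicit.
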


\begin{proof} We use the notation $H_\a, H^{(1)}_\a, H^{(2)}_\a$
 from Lemma \ref{HH:lem}, and  
\begin{equation*}
 W_\a :=  W_\a(a_z^{-1}; I), \ 
W^{(1)}_\a := W_\a(a_z^{-1}; \R_{x_0}^{(+)}), \ 
W^{(2)}_\a := W_\a(a_z^{-1};  \R_{y_0}^{(-)}). 
\end{equation*}
By Lemma \ref{TT:lem} and the bound \eqref{norm_bound_h:eq},
\begin{equation*}
\|(W_\a - W^{(k)}_\a) 
H^{(k)}_\a\bigl(\mathds{1}-H^{(k)}_\a\bigr)^{-1}\|_{\GS_1}
\le  C \a^{-m+1} 
\frac{|z|+\|a\|_{\plainL\infty}}{\d(z, a)^2} M^{(m)}(a_z, a_z^{-1}),
\end{equation*}
for  $k = 1, 2$. Together with  
Lemma \ref{HH:lem} this gives
\begin{align*}
\|W_\a H_\a(\mathds{1}-H_\a)^{-1}
- &\ \sum_{k=1}^2 W^{(k)}_\a 
H^{(k)}_\a(\mathds{1}-H^{(k)}_\a)^{-1}\|_{\GS_1}\\[0.2cm]
\le &\  C \a^{-m+1} \frac{1}{\d(z, a)}
\biggl(\frac{|z|+\|a\|_{\plainL\infty}}{\d(z, a)}\biggr)^3 M^{(m)}(a_z, a_z^{-1}), j = 1, 2.
\end{align*}
Now formula \eqref{W_rep:eq} leads to the proclaimed estimate. 
\end{proof}

\begin{proof}[Proof of Theorem \ref{interval_res:thm} for the case $I=(x_0, y_0)$] 
Lemma \ref{red_resolv:lem} shows that for
the function $r_z$ defined as $r_z(\l) := (\l-z)^{-1}$ the trace of
$D_\a(a, I; r_z)$ coincides with the sum 
\begin{equation}\label{halfax:eq}
\tr D_\a(a, \R_{y_0}^{(-)}; r_z) + \tr D_\a(a, \R_{x_0}^{(+)}; r_z)
\end{equation}
up to the remainder specified in the lemma. 
As we have observed earlier, due to the translation and reflection invariance, 
each of the intervals $\R_{x_0}^{(+)}, \R_{y_0}^{(-)}$ 
in the above trace sum can be replaced by $(0, \infty)$. 
When calculating the traces in \eqref{halfax:eq}, 
by making the 
change of variables $x\to \a x$ we can take $\a = 1$. Now Theorem 
\ref{interval_res:thm} follows from Proposition \ref{Widom_82:prop}(1). 
\end{proof}

\section{Proof of Theorem \ref{interval_res:thm}: 
the case of multiple intervals}\label{multiple:sect}

In this section we consider general sets $I$ of the form \eqref{Kint:eq}, and 
assume that \eqref{mult_int:eq} is satisfied. Throughout this section 
we assume that $a\in \plainC{m}(\R)$ with some $m\ge 3$ and that $a$ 
is real-valued. The parameter $\a$ is allowed to take any 
positive value and the constants 
in all estimates obtained are independent 
of the function $a$ or the parameters $z$ with $\d(z, a) >0$ and $\a$.   
Our strategy is to reduce the case of general $I$'s either to the 
case of one bounded interval, considered in the previous section, 
or to the case of the half-line, covered by Proposition \ref{Widom_82:prop}. 
More precisely, our objective is to prove the following result:

\begin{thm}\label{addit:thm}
For all $\a >0$ we have 
\begin{align}\label{addit:eq}
\|D_\a(a, I; r_z) - &\ \sum_{k} D_\a(a, I_k; r_z) \|_{\GS_1}
\notag\\[0.2cm]
\le  &\ C \a^{-m+1}  \frac{1}{\d(z, a)} 
\biggl(\frac{|z|+\|a\|_{\plainL\infty}}{\d(z, a)}\biggr)^3 
M^{(m)}(a_z, a_z^{-1}),
\end{align}
where $M^{(m)}(a,b)$ is defined in \eqref{M:eq}.
\end{thm}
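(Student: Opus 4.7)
The plan is to reduce to the single-component analyses of Section~\ref{res1:sect} by peeling off from $D_\a(a, I; r_z)$ a block-diagonal operator associated with the orthogonal partition $\chi_I = \sum_k \chi_{I_k}$, controlling the off-diagonal error via Proposition~\ref{separate:prop} and the separation hypothesis \eqref{mult_int:eq}. Abbreviating $H[J] := H_\a(a_z, a_z^{-1}; J)$ and $W[J] := W_\a(a_z^{-1}; J)$ for a measurable $J \subset \R$, introduce the block-diagonal approximants $\tilde H := \sum_k H[I_k]$ and $\tilde W := \sum_k W[I_k]$. Since $\tilde H$ and $\tilde W$ preserve the orthogonal decomposition $\plainL2(I) = \bigoplus_k \plainL2(I_k)$, applying \eqref{W_rep:eq} componentwise (its derivation remains valid verbatim for unbounded $I_0$ and $I_{K+1}$) and summing yields $\sum_k D_\a(a, I_k; r_z) = \tilde W\, \tilde H\, (\mathds{1}-\tilde H)^{-1}$, while \eqref{W_rep:eq} applied to $I$ itself gives $D_\a(a, I; r_z) = W[I]\, H[I]\, (\mathds{1}-H[I])^{-1}$.

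The core task is then to prove $\GS_1$-smallness of $H[I] - \tilde H$ and $W[I] - \tilde W$. A direct expansion gives
\[
W[I] - \tilde W = \sum_{k\ne j} \chi_{I_k}\op_\a(a_z^{-1})\chi_{I_j},
\]
while $H[I] - \tilde H$ reduces to a sum over $k\ne j$ of terms of the form $\chi_{I_k}\op_\a(a_z)\chi_{I^c}\op_\a(a_z^{-1})\chi_{I_j}$, together with diagonal corrections $\chi_{I_k}\op_\a(a_z)\chi_{I_j}\op_\a(a_z^{-1})\chi_{I_k}$ arising from $(\mathds{1}-\chi_I) - (\mathds{1}-\chi_{I_k}) = -\sum_{j\ne k}\chi_{I_j}$. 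For every such pair the separation \eqref{mult_int:eq} allows one to sandwich $I_k$ and $I_j$ between opposing half-lines $\chi_t^{(-)}$ and $\chi_s^{(+)}$ with $s-t\ge 1$, whereupon Proposition~\ref{separate:prop} bounds $\|\chi_t^{(-)}\op_\a(a_z^{\pm 1})\chi_s^{(+)}\|_{\GS_1}$ by $C\a^{-m+1}\|\p_\xi^m a_z^{\pm 1}\|_{\plainL1}$. Collecting all contributions and using $\|\op_\a(a_z)\|\le |z|+\|a\|_{\plainL\infty}$ and $\|\op_\a(a_z^{-1})\|\le \d(z,a)^{-1}$ for the remaining factors yields $\GS_1$-estimates on $H[I]-\tilde H$ and $W[I]-\tilde W$ of the order demanded by \eqref{addit:eq}.

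Finally, invoking the resolvent identity $(\mathds{1}-H[I])^{-1} - (\mathds{1}-\tilde H)^{-1} = (\mathds{1}-H[I])^{-1}(H[I]-\tilde H)(\mathds{1}-\tilde H)^{-1}$ together with the uniform bound \eqref{norm_bound_h:eq} (which applies to each $H[I_k]$, and hence to $\tilde H$, as well as to $H[I]$ itself), the three-term telescoping
\begin{align*}
W[I]H[I](\mathds{1}-H[I])^{-1} &\ - \tilde W\tilde H(\mathds{1}-\tilde H)^{-1}\\[0.2cm]
= &\ (W[I]-\tilde W)H[I](\mathds{1}-H[I])^{-1}\\[0.2cm]
&\ + \tilde W(H[I]-\tilde H)(\mathds{1}-H[I])^{-1}\\[0.2cm]
&\ + \tilde W\tilde H\bigl[(\mathds{1}-H[I])^{-1}-(\mathds{1}-\tilde H)^{-1}\bigr]
\end{align*}
delivers the claimed bound \eqref{addit:eq}, with the factor $\d(z,a)^{-1}\bigl[(|z|+\|a\|_{\plainL\infty})/\d(z,a)\bigr]^3 M^{(m)}(a_z,a_z^{-1})$ accumulating exactly as in Lemma~\ref{red_resolv:lem}. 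The main obstacle lies in the careful bookkeeping of the cross terms in the second paragraph: the middle factor $\chi_{I^c}$ inside $H[I]$ has to be re-expressed in terms of the complements of the individual $I_k$, and each resulting triple product of operators sandwiched between indicator functions must be recast --- typically by inserting an extra cut-off in the gap between $I_k$ and $I_j$ and splitting via $\|AB\|_{\GS_1}\le \|A\|_{\GS_1}\|B\|$ --- so that Proposition~\ref{separate:prop}, formulated for opposing half-lines flanking a single pseudo-differential operator, can be applied.
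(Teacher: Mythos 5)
Your proposal is correct and follows essentially the same route as the paper: the representation \eqref{W_rep:eq} for $I$ and for each $I_k$, trace-norm control of $W[I]-\tilde W$ and $H[I]-\tilde H$ via Proposition \ref{separate:prop} and the separation \eqref{mult_int:eq}, and a resolvent-identity telescoping closed with the uniform bound \eqref{norm_bound_h:eq}. The only (harmless) difference is organisational — the paper compares $(\mathds{1}-H_\a)^{-1}H_\a^{(k)}$ with $(\mathds{1}-H_\a^{(k)})^{-1}H_\a^{(k)}$ term by term, exploiting $H_\a^{(j)}H_\a^{(k)}=0$ for $j\neq k$, whereas you package the components into the block-diagonal operators $\tilde W$, $\tilde H$ and apply a single resolvent identity.
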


The proof consists of several steps:

\begin{lem}Under the above conditions
\begin{equation}\label{s_mnogo:eq}
\| W_\a(a_z^{-1}; I) - \sum_k W_\a(a_z^{-1}; I_k) \|_{\GS_1}
\le C \a^{-m+1} 
\frac{1}{\d(z, a)}M^{(m)}(a_z, a_z^{-1}),
\end{equation}
and with $H_\a(a,b;I)$ defined in \eqref{halpha:eq},
\begin{equation}\label{h_mnogo:eq}
\|H_\a(a_z, a_z^{-1}; I) - \sum_k H_\a(a_z, a_z^{-1}; I_k)\|_{\GS_1}
\le C\a^{-m+1} M^{(m)}(a_z, a_z^{-1}).
\end{equation}

\end{lem}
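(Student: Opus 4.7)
The plan is to reduce both estimates to Proposition~\ref{separate:prop} applied to off-diagonal pieces of the form $\chi_{I_k}\op_\a(\cdot)\chi_{I_j}$ with $k\ne j$. By the separation hypothesis \eqref{mult_int:eq}, any two distinct intervals $I_k$, $I_j$ are separated by a gap of length at least $1$, so one may insert a pair $\chi_t^{(-)}$, $\chi_t^{(+)}$ with $t$ in this gap and invoke Proposition~\ref{separate:prop} with $\ell \ge 1$. Since the number of intervals is finite, summing over all pairs preserves the $\a^{-m+1}$ rate.

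For \eqref{s_mnogo:eq}, the identity $\chi_I = \sum_k \chi_{I_k}$ yields
\begin{equation*}
W_\a(a_z^{-1}; I) - \sum_k W_\a(a_z^{-1}; I_k) = \sum_{k\ne j} \chi_{I_k}\op_\a(a_z^{-1})\chi_{I_j}.
\end{equation*}
For each pair $k\ne j$ I pick a point $t$ in the separating gap, write $\chi_{I_k} = \chi_{I_k}\chi_t^{(\mp)}$ and $\chi_{I_j}=\chi_t^{(\pm)}\chi_{I_j}$ with the orientation dictated by the order of $I_k, I_j$, and apply Proposition~\ref{separate:prop} to bound each summand in $\GS_1$ by $C_m\a^{-m+1}\|\p_\xi^m a_z^{-1}\|_{\plainL1}$. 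To convert to $M^{(m)}(a_z,a_z^{-1})$, I use that, by the very definition \eqref{distz:eq} of $\d(z,a)$, one has $\|a_z\|_{\plainL\infty}\ge\d(z,a)$, hence $\|\p_\xi^m a_z^{-1}\|_{\plainL1}\le M^{(m)}(a_z,a_z^{-1})/\d(z,a)$, which delivers the claimed right-hand side of \eqref{s_mnogo:eq}.

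For \eqref{h_mnogo:eq}, I start from $H_\a(a_z,a_z^{-1};J) = \chi_J - W_\a(a_z;J)W_\a(a_z^{-1};J)$, valid for any $J$. Since $\chi_I=\sum_k\chi_{I_k}$, the linear terms cancel and only
\begin{equation*}
\sum_k W_\a(a_z;I_k)W_\a(a_z^{-1};I_k) \;-\; W_\a(a_z;I)W_\a(a_z^{-1};I)
\end{equation*}
remains to be estimated. Setting $R_a := W_\a(a_z;I)-\sum_k W_\a(a_z;I_k)$ and defining $R_b$ analogously for $a_z^{-1}$, and exploiting $\chi_{I_k}\chi_{I_j}=0$ for $k\ne j$ to annihilate the off-diagonal cross terms in $\bigl(\sum_k W_\a(a_z;I_k)\bigr)\bigl(\sum_j W_\a(a_z^{-1};I_j)\bigr)$, the displayed difference reduces to
\begin{equation*}
-R_a\sum_j W_\a(a_z^{-1};I_j) \;-\; \sum_k W_\a(a_z;I_k)\,R_b \;-\; R_a R_b.
\end{equation*}
The bounds $\|R_a\|_{\GS_1}\le C\a^{-m+1}\|\p_\xi^m a\|_{\plainL1}$ and $\|R_b\|_{\GS_1}\le C\a^{-m+1}\|\p_\xi^m a_z^{-1}\|_{\plainL1}$ follow from the argument of the preceding paragraph applied to $a$ and to $a_z^{-1}$, while the remaining factors obey $\|\sum_k W_\a(a_z;I_k)\|\le\|a_z\|_{\plainL\infty}$ and $\|\sum_j W_\a(a_z^{-1};I_j)\|\le\|a_z^{-1}\|_{\plainL\infty}$. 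Pairing these $L^\infty$ bounds with the $\GS_1$ bounds exactly reproduces the two summands of $M^{(m)}(a_z,a_z^{-1})$ in \eqref{M:eq}.

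The main obstacle is the bilinear remainder $R_aR_b$: one must estimate $R_a$ in $\GS_1$ and $R_b$ in operator norm (or vice versa), using $\|R_b\|\le 2\|a_z^{-1}\|_{\plainL\infty}$, so that the resulting product matches a summand of $M^{(m)}(a_z,a_z^{-1})$ without producing a spurious factor of $1/\d(z,a)$. This careful bookkeeping is precisely what distinguishes \eqref{h_mnogo:eq}, where no $1/\d(z,a)$ appears, from \eqref{s_mnogo:eq}, where the absence of a neighbouring $a_z$ factor forces one to extract such a factor via $\|a_z\|_{\plainL\infty}\ge\d(z,a)$.
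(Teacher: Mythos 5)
Your proof is correct. The first estimate \eqref{s_mnogo:eq} is handled exactly as in the paper: expand $\chi_I=\sum_k\chi_{I_k}$, bound each off-diagonal piece $\chi_{I_k}\op_\a(a_z^{-1})\chi_{I_j}$ via Proposition \ref{separate:prop} using the unit gap from \eqref{mult_int:eq}, and convert $\|\p_\xi^m a_z^{-1}\|_{\plainL1}$ into $\d(z,a)^{-1}M^{(m)}(a_z,a_z^{-1})$ through $\|a_z\|_{\plainL\infty}\ge\d(z,a)$; all of these steps are sound.

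For \eqref{h_mnogo:eq} you take a genuinely different, though equally valid, route. The paper uses the representation $H_\a(a_z,a_z^{-1};J)=\chi_J\op_\a(a_z)(\mathds{1}-\chi_J)\op_\a(a_z^{-1})\chi_J$ and the identity $\op_\a(a_z)\op_\a(a_z^{-1})=\mathds{1}$ to write the difference as
\begin{equation*}
-\sum\chi_{I_k}\op_\a(a_z)\chi_{I_j}\op_\a(a_z^{-1})\chi_{I_s},
\end{equation*}
summed over triples with $j\not=k$ or $j\not=s$, so that every summand contains an off-diagonal factor controlled by Proposition \ref{separate:prop}; the two types of terms ($j\ne k$ versus $j=k\ne s$) produce precisely the two summands of $M^{(m)}(a_z,a_z^{-1})$. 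You instead factor through $H=\chi-WW$, cancel the linear terms, kill the cross terms in the product of sums by $\chi_{I_k}\chi_{I_j}=0$, and estimate the remainders $R_a$, $R_b$. This buys a cleaner algebraic structure at the price of the extra bilinear term $R_aR_b$, which you correctly dispose of by pairing a trace-norm bound with the operator-norm bound $\|R_b\|\le 2\|a_z^{-1}\|_{\plainL\infty}$. One small point worth making explicit: the bound $\|\sum_k W_\a(a_z;I_k)\|\le\|a_z\|_{\plainL\infty}$ holds because the $\chi_{I_k}$ are mutually orthogonal projections, so the block-diagonal sum has norm equal to the maximum of the blocks. With that observed, both summands of $M^{(m)}(a_z,a_z^{-1})$ are reproduced exactly as you claim, and no factor of $\d(z,a)^{-1}$ appears, in agreement with \eqref{h_mnogo:eq}.
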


\begin{proof} In order to prove \eqref{s_mnogo:eq} we write
\begin{equation*}
W_\a(a_z^{-1}; I) - \sum_k W_\a(a_z^{-1}; I_k)
= \sum_{j, k: j\not = k} \chi_{I_k} \op_\a(a_z^{-1}) \chi_{I_j}.
\end{equation*}
Due to the condition \eqref{mult_int:eq}, 
by Proposition \ref{separate:prop}, the trace norm of the right-hand side 
does not exceed 
\[
\a^{-m+1} \|\p_\xi^m a_z^{-1}\|_{\plainL1}\le \a^{-m+1} 
\frac{1}{\d(z, a)}M^{(m)}(a_z, a_z^{-1}), 
\]
as required. For the proof of \eqref{h_mnogo:eq} we write
\begin{equation*}
H_\a(a_z, a_z^{-1}; I) - \sum_k H_\a(a_z, a_z^{-1}; I_k)
= -\sum \chi_{I_k} \op_\a(a_z) \chi_{I_j} \op_\a(a_z^{-1}) \chi_{I_s},
\end{equation*}
where the sum is taken over the indices such that either $j\not =k$ or $j\not =s$. 
By Proposition \ref{separate:prop}, the trace norm of the right-hand side does not exceed
\begin{align*}
\a^{\-m+1}
\bigl(\|a_z\|_{\plainL\infty} \|\p_\xi^m a_z^{-1}\|_{\plainL1} 
+ \|a_z^{-1}\|_{\plainL\infty} \|\p_\xi^m a_z\|_{\plainL1} 
\bigr) = \a^{-m+1} M^{(m)}(a_z, a_z^{-1}),
\end{align*}
as required.
\end{proof}

\begin{lem}\label{H_split:lem}Under the above conditions
\begin{align*}
\big\|\bigl[ \mathds{1} - &\ H_\a(a_z, a_z^{-1}; I) \bigr]^{-1}H_\a(a_z, a_z^{-1}; I) -
\sum_k \bigl[ \mathds{1} - H_\a(a_z, a_z^{-1}; I_k)\bigr]^{-1}H_\a(a_z, a_z^{-1}; I_k)\big\|_{\GS_1}
\\
&\le C \a^{-m+1} \biggl(\frac{|z|+\|a\|_{\plainL\infty}}{\d(z, a)}\biggr)^3 M^{(m)}(a_z, a_z^{-1}).
\end{align*}
\end{lem}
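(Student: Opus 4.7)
Denote $H := H_\a(a_z, a_z^{-1}; I)$, $H_k := H_\a(a_z, a_z^{-1}; I_k)$, $G := \sum_k H_k$, and $R := H - G$. The estimate \eqref{h_mnogo:eq} already supplies $\|R\|_{\GS_1}\le C\a^{-m+1}M^{(m)}(a_z,a_z^{-1})$, so the task reduces to showing that $(\mathds{1}-H)^{-1}H$ differs from $(\mathds{1}-G)^{-1}G$ by an operator whose trace norm enjoys the same bound, multiplied by at most a constant times $\bigl((|z|+\|a\|_{\plainL\infty})/\d(z,a)\bigr)^3$.

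The algebraic heart of the argument is the observation that $H_k H_j = 0$ for $k\ne j$: each $H_\ell$ both begins (on the left) and ends (on the right) with the factor $\chi_{I_\ell}$, and $\chi_{I_k}\chi_{I_j}=0$ for disjoint $I_k,I_j$. Hence $G$ is block-diagonal with respect to the orthogonal decomposition $\plainL2(\R) = \plainL2(\R\setminus I)\oplus\bigoplus_k \plainL2(I_k)$, acting as $0$ on $\plainL2(\R\setminus I)$ and as $H_k|_{\plainL2(I_k)}$ on each $\plainL2(I_k)$. Running the argument of \eqref{W_rep:eq}--\eqref{norm_bound_h:eq} on each $I_k$ separately (legitimate since $|I_k|\ge 1$ by \eqref{mult_int:eq}), each block $\mathds{1}-H_k|_{\plainL2(I_k)}$ is invertible with norm bounded by $(|z|+\|a\|_{\plainL\infty})/\d(z,a)$; consequently $\mathds{1}-G$ is invertible with $\|(\mathds{1}-G)^{-1}\|\le (|z|+\|a\|_{\plainL\infty})/\d(z,a)$, and the block structure immediately yields $(\mathds{1}-G)^{-1}G = \sum_k (\mathds{1}-H_k)^{-1}H_k$, each summand being extended by zero off $\plainL2(I_k)$.

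The conclusion will follow from the elementary identity $(\mathds{1}-X)^{-1}X = (\mathds{1}-X)^{-1}-\mathds{1}$, valid whenever $\mathds{1}-X$ is invertible. Applied to both $X=H$ and $X=G$ and combined with the first resolvent identity applied to $R = H-G$, this gives the compact formula
\begin{equation*}
(\mathds{1}-H)^{-1}H - (\mathds{1}-G)^{-1}G = (\mathds{1}-H)^{-1}R(\mathds{1}-G)^{-1}.
\end{equation*}
Taking $\GS_1$-norms and invoking the bounds on $\|R\|_{\GS_1}$, $\|(\mathds{1}-H)^{-1}\|$ (from \eqref{norm_bound_h:eq}), and $\|(\mathds{1}-G)^{-1}\|$ just established then yields the required estimate, in fact with the quadratic power of $(|z|+\|a\|_{\plainL\infty})/\d(z,a)$; since $\d(z,a)\le|z|$ forces this ratio to be at least $1$, the stated cubic bound follows \emph{a fortiori}.

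The only mildly nontrivial step is the algebraic observation $H_kH_j = 0$ and its consequence $(\mathds{1}-G)^{-1}G = \sum_k (\mathds{1}-H_k)^{-1}H_k$; once these are in place the proof collapses to one application of the resolvent identity combined with previously established bounds, and avoids the more intricate $Z_\a$-decomposition used in Lemma \ref{HH:lem}.
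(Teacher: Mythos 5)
Your proof is correct, and it takes a genuinely more economical route than the paper's. The paper first replaces $H_\a(a_z,a_z^{-1};I)$ by $\sum_k H_\a^{(k)}$ in one factor (costing \eqref{h_mnogo:eq} times \eqref{norm_bound_h:eq}), and then compares $(\mathds{1}-H_\a)^{-1}H_\a^{(k)}$ with $(\mathds{1}-H_\a^{(k)})^{-1}H_\a^{(k)}$ term by term via the resolvent identity, using exactly the orthogonality $H_\a^{(j)}H_\a^{(k)}=0$ for $j\ne k$ that you isolate, to kill the cross terms in $\|(H_\a-H_\a^{(k)})H_\a^{(k)}\|_{\GS_1}$; the extra factor $\|H_\a^{(k)}\|$ from \eqref{H_b:eq} is what produces the cubic power. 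You instead exploit the same orthogonality once and for all to see that $G:=\sum_k H_\a^{(k)}$ is block-diagonal, so that $(\mathds{1}-G)^{-1}G=\sum_k(\mathds{1}-H_\a^{(k)})^{-1}H_\a^{(k)}$ and $\|(\mathds{1}-G)^{-1}\|$ obeys the same bound as \eqref{norm_bound_h:eq} (the derivation of that bound applies verbatim to each $I_k$, including the half-lines $I_0,I_{K+1}$, and requires no lower bound on $|I_k|$); then the identity $(\mathds{1}-X)^{-1}X=(\mathds{1}-X)^{-1}-\mathds{1}$ collapses the difference to a single resolvent-identity expression $(\mathds{1}-H_\a)^{-1}(H_\a-G)(\mathds{1}-G)^{-1}$. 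This yields the estimate with the quadratic power of $(|z|+\|a\|_{\plainL\infty})/\d(z,a)$, which, as you note, is at least $1$ because $0\in[-\|a\|_{\plainL\infty},\|a\|_{\plainL\infty}]$ forces $\d(z,a)\le|z|$, so the stated cubic bound follows. Both arguments rest on the same two inputs, \eqref{h_mnogo:eq} and \eqref{norm_bound_h:eq}; yours packages them more cleanly and is slightly sharper.
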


\begin{proof}
For brevity we denote 
$H_\a := H_\a(a_z, a_z^{-1}; I)$, 
$H_\a^{(k)} := H_\a(a_z, a_z^{-1}; I_k)$. 
Due to \eqref{norm_bound_h:eq} and 
\eqref{h_mnogo:eq}, in the first term we can replace 
$H_\a$ with $\sum_{k} H_\a^{(k)}$. 
Now we estimate, using the resolvent identity:
\begin{align*}
\big\|(\mathds{1}-H_\a)^{-1} H_\a^{(k)} 
- &\ (\mathds{1}-H_\a^{(k)})^{-1} H_\a^{(k)}\big\|_{\GS_1}\\[0.2cm]
\le &\ \|(\mathds{1}-H_\a)^{-1}\| \ 
\|(\mathds{1}-H_\a^{(k)})^{-1}\|\ 
\|( H_\a - H_\a^{(k)})H_\a^{(k)}\|_{\GS_1}\\[0.2cm]
\le &\ \biggl( 
\frac{|z|+\|a\|_{\plainL\infty}}{\d(z, a)}\biggr)^2 
\biggl[
\|H_\a - \sum_{j} H_\a^{(j)}\|_{\GS_1}\|H_\a^{(k)}\| 
+ \sum_{j\not =k} \|H_\a^{(j)} H_\a^{(k)}\|_{\GS_1}
\biggr],
\end{align*}
where we have used \eqref{norm_bound_h:eq} again. 
As $j\not = k$ in the last term in the square brackets, this term equals zero. 
Now the required bound follows from \eqref{h_mnogo:eq} and the bound 
\begin{equation}\label{H_b:eq}
\|H_\a^{(k)}\|\le 
\frac{|z|+\|a\|_{\plainL\infty}}{\d(z, a)}.
\end{equation}
\end{proof}

\begin{proof}[Proof of Theorem \ref{addit:thm}]
As in the previous proof we use the notation $H_\a, H_\a^{(k)}$. 
Also, we denote $W_\a := W_\a(a_z^{-1}; I)$, $W_\a^{(k)} := W_\a(a_z^{-1}; I_k)$. 
 In view of \eqref{W_rep:eq},
\begin{align*}
\|D_\a(a, I; r_z) - &\ \sum_k D_\a(a, I_k; r_z)\|_{\GS_1}\\[0.2cm]
\le &\ \bigl\| W_\a - \sum_k W_\a^{(k)}\bigr\|_{\GS_1} \|(\mathds{1}-H_\a)^{-1} H_\a\|\\[0.2cm]
&\ + \sum_{k} \|W_\a^{(k)} \|\ \bigl\|(\mathds{1}-H_\a)^{-1} H_\a 
- \sum_k (\mathds{1}-H_\a^{(k)})^{-1} H_\a^{(k)}\bigr\|_{\GS_1}.
\end{align*}
The first term on the right-hand side satisfies 
\eqref{addit:eq} by \eqref{s_mnogo:eq} and \eqref{norm_bound_h:eq}, \eqref{H_b:eq}. 
The second term satisfies \eqref{addit:eq} by Lemma \ref{H_split:lem} and due to the bound
$\|W_\a^{(k)}\|\le \d(z, a)^{-1}$.
\end{proof}

\begin{proof}[Proof of Theorem \ref{interval_res:thm}]
By \eqref{addit:eq}, it remains to use the results for individual 
operators $D_\a(a, I_k; r_z)$. For $k = 1, 2, \dots, K$, that is, when 
$I_k$ is a bounded interval, we use the bound \eqref{int_res:eq} proved 
previously. If $k = 0$ or $k = K+1$, that is, when $I_k$ is a half-line, 
we use the identity \eqref{Widom_82:eq}. This leads to 
the bound \eqref{int_res:eq}, and completes the proof of Theorem 
\ref{interval_res:thm}.
\end{proof}

\section{Estimates for $D_\a(a, \L; f)$ 
with Fermi symbol $a=a_{T,\mu}$: multi-dimensional case}
\label{gen_bounds:sect}

As explained in the Introduction, 
the asymptotic analysis in this paper was partly motivated by the 
study of the entanglement entropy of free fermions. 
Thus in this section we 
apply the results obtained above to 
the special choice of the symbol $a$ featuring in definition 
\eqref{Dalpha:eq}. We choose the symbol $a$ 
to be the Fermi symbol $a_{T, \mu}$ defined in \eqref{positiveT:eq}.  
The choice of the (non-smooth) 
function $f$ remains arbitrary for the time-being. 
Further on, in Section \ref{entropy:sect}, we 
specialise to the R\'enyi 
entropy function $f=\eta_\gamma$, $\g >0$.

The physical context of the various quantities is as follows.
We assume that the energy of a single particle in position space $\R^d$ 
consists only of kinetic energy in the absense of external forces
and is determined by a Hamiltonian 
$h = h(\bxi)$ and that, for simplicity, 
particles do not have a spin-degree of freedom. 
The free Fermi gas is then a collection of 
infinitely many such particles 
obeying the (Pauli--)Fermi--Dirac statistics. 
An equilibrium state of this free 
Fermi gas is uniquely determined by 
specifying the temperature $T>0$, the chemical potential $\mu\in\R$, 
and the Fermi symbol \eqref{positiveT:eq}.
%
%
%
%
We will assume that $\mu$ is fixed and $T\in (0, T_0]$, and, in particular, 
$T$ is allowed to become small, that is, $T\downarrow 0$. 
Our aim is to find estimates with explicit dependence on $T$ and $\a$.

In what follows it will be convenient to use the following notation. 
For any two non-negative 
functions $x$ and $y$ depending on all or some of the variables/parameters 
$\a, T, \bxi$, we write $x\asymp y$ if there exist two constants $C, c$ independent 
of $\a, T, \bxi$ such that $cy\le x\le Cy$.

The assumptions on the function $h = h(\bxi)$ are as follows:

\begin{cond} \label{h:cond}
\begin{enumerate}
\item
The function $h\in\plainC\infty(\R^d)$ is real-valued, and for sufficiently large $\bxi$ 
and with some constants $\b_1>0$ and $c>0$ we have 
\begin{equation}\label{equih:eq}
h(\bxi)\ge c|\bxi|^{\b_1}. 
\end{equation}
Moreover, for some $\b_2\ge 0$
\begin{equation}\label{derivh:eq}
|\nabla^n h(\bxi)| \le C_n (1+|\bxi|)^{\b_2}, \ n= 1, 2, \dots,\ \ \bxi\in\R^d.
\end{equation}

\item  
On the set $S := \{\bxi\in\R^d: h(\bxi) = \mu\}$ the condition 
\begin{equation}\label{nondeg:eq}
\nabla h(\bxi)\not = 0,\ \bxi\in S
\end{equation}
is satisfied. 
\item
The Fermi sea $\Om := \{\bxi\in\R^d: h(\bxi) < \mu\}$ has finitely many connected components. 
\end{enumerate}
\end{cond}

Let us record some useful inequalities for the symbol $a=a_{T,\mu}$ from \eqref{positiveT:eq}. 

\begin{lem}\label{hbounds:lem}
	Suppose that $0 < T \le T_0$. Then 
	\begin{equation*}
	|\nabla^n a(\bxi) |\le C_n a(\bxi)\bigl(1-a(\bxi)\bigr) (1+|\bxi|)^{n\b_2} T^{-n}, n =1, 2, \dots,
	\end{equation*}
	with constants $C_n$ depending on $T_0$, $\mu$, and the constants in \eqref{derivh:eq}.
\end{lem}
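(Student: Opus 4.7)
The plan is to reduce the estimate to the one-variable identity satisfied by the Fermi function and then combine it with the polynomial growth of the derivatives of $h$ via the multivariate Fa\`a di Bruno formula.

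Write $a(\bxi)=F\bigl(g(\bxi)\bigr)$, where
\begin{equation*}
F(t):=\frac{1}{1+e^{t}},\qquad g(\bxi):=\frac{h(\bxi)-\mu}{T}.
\end{equation*}
The first key observation is a one-variable identity: by induction on $k$ one shows that
\begin{equation*}
F^{(k)}(t)=F(t)\bigl(1-F(t)\bigr)\,P_{k-1}\bigl(F(t)\bigr),\qquad k\ge 1,
\end{equation*}
for some polynomial $P_{k-1}$ of degree $k-1$ (the base case $k=1$ is $F'=-F(1-F)$, and the inductive step uses $F'=-F(1-F)$ once more). Since $F$ takes values in $(0,1)$, this yields the pointwise bound
\begin{equation*}
|F^{(k)}(t)|\le C_k\,F(t)\bigl(1-F(t)\bigr)=C_k\,a(\bxi)\bigl(1-a(\bxi)\bigr)\quad\text{when }t=g(\bxi).
\end{equation*}

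The second ingredient is the chain rule at order $n$ (multivariate Fa\`a di Bruno): for a multi-index $\alpha$ with $|\alpha|=n$, $\partial^{\alpha}a(\bxi)$ is a finite linear combination of terms of the form
\begin{equation*}
F^{(m)}\bigl(g(\bxi)\bigr)\,\prod_{i=1}^{m}\partial^{\alpha^{(i)}}g(\bxi),
\end{equation*}
with $1\le m\le n$ and $\alpha^{(1)}+\cdots+\alpha^{(m)}=\alpha$, each $|\alpha^{(i)}|\ge 1$. Since $g=(h-\mu)/T$, condition \eqref{derivh:eq} gives
\begin{equation*}
|\partial^{\alpha^{(i)}}g(\bxi)|\le C_{|\alpha^{(i)}|}\,T^{-1}\,(1+|\bxi|)^{\beta_2},
\end{equation*}
so the product contributes at most $C\,T^{-m}(1+|\bxi|)^{m\beta_2}$, and the $F^{(m)}$ factor contributes $C_m\,a(\bxi)(1-a(\bxi))$ by the one-variable bound above.

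Finally we consolidate the exponents. Since $m\le n$ and $0<T\le T_0$, we have $T^{-m}\le T_0^{n-m}\,T^{-n}\le C\,T^{-n}$; and since $\beta_2\ge 0$, $(1+|\bxi|)^{m\beta_2}\le(1+|\bxi|)^{n\beta_2}$. Summing the finitely many terms in the Fa\`a di Bruno expansion gives the desired estimate. The only slightly delicate step is the inductive verification of the identity $F^{(k)}=F(1-F)P_{k-1}(F)$, but this is elementary; once it is in hand, the rest is bookkeeping.
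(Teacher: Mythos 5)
Your argument is correct and is exactly the ``elementary'' proof that the paper omits: the identity $F'=-F(1-F)$ for the Fermi function, the inductive consequence $F^{(k)}=F(1-F)P_{k-1}(F)$ with $P_{k-1}$ bounded on $(0,1)$, Fa\`a di Bruno combined with \eqref{derivh:eq}, and the consolidation $T^{-m}\le T_0^{n-m}T^{-n}$, $(1+|\bxi|)^{m\b_2}\le(1+|\bxi|)^{n\b_2}$ all check out. Nothing is missing.
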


The proof is elementary and thus omitted. 

A straightforward calculation leads to the bounds
\begin{equation}\label{step:eq}
|a(\bxi) - \chi_\Om(\bxi)|\le \exp{\biggl(-\frac{|h(\bxi) - \mu|}{T}\biggr)},\ \bxi\in\Rd,
\end{equation}
and
\begin{equation}\label{a1-a:eq}
a(\bxi) \bigl(1-a(\bxi))\le \exp{\biggl(-\frac{|h(\bxi) - \mu|}{T}\biggr)},\ \bxi\in\Rd.
\end{equation} 
Our objective is to obtain the following estimate. 

\begin{thm}\label{entropy:thm}  
	Suppose that the function $f$ 
	satisfies Condition \ref{f:cond} with some $n\ge 2$ and $\g >0$,  
	Suppose also that the region
	$\L$ and the function $h$ 
	satisfy Conditions \ref{domain:cond} and 
	\ref{h:cond} respectively.  
	Let $\a T \ge \a_0>0, 0< T\le T_0$ 
	for some $\a_0$ and $T_0$. 
	Then for any $\s\in (0, \g)$, $\s \le 1$,  
	we have 
	\begin{equation}\label{entropy:eq}
	\bigl\| 
	D_\a(a_{T, \mu}, \L; f)\bigr\|_{\GS_1}
	\le C
	R^{\g-\s} \a^{d-1}(|\log(T)| + 1)  \1 f\1_n,
	\end{equation}
	with a constant $C$ independent of $T, R, t_0, \a$, 
	and the function $f$, 
	but depending on $\a_0, T_0, \mu$.
\end{thm}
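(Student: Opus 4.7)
The plan is to derive the bound \eqref{entropy:eq} from Theorem \ref{multi:thm} applied to $a = a_{T,\mu}$ with $q = 1$, after identifying a scale $\tau$ and amplitude $v$ adapted to the Fermi surface $S := \{h = \mu\}$. The hypothesis $\a T \ge \a_0$ will give \eqref{tau_low:eq} since $\tau$ will be bounded below by $T$, and the $|\log T|$ factor will arise as the size of $V_{\s,1}(v,\tau)$.

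First I would set
\[
\tau(\bxi) := T + \d(1+|\bxi|)^{-\b_2}\min\bigl\{|h(\bxi) - \mu|, 1\bigr\},
\]
with $\d > 0$ chosen small enough to guarantee \eqref{Lip:eq}, and take $v$ to be a smooth modification of $\mathds{1}_{\{h \le \mu + 1\}} + e^{-(h - \mu - 1)/T}\mathds{1}_{\{h > \mu + 1\}}$. Lemma \ref{hbounds:lem} together with \eqref{derivh:eq} then gives \eqref{scales:eq} with $T$-independent constants: the factor $a(1-a)$ in the derivative bound matches the decay of $v$ outside $\{h \le \mu + 1\}$, and the polynomial growth $(1+|\bxi|)^{n\b_2}$ coming from $\nabla^n h$ is absorbed by the $(1+|\bxi|)^{-\b_2}$ factor in $\tau$. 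The slow-variation condition \eqref{w:eq} on $v$ follows because $h$ varies by at most $O(T)$ over any ball of radius $\tau(\bxi)$.

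Next I would estimate $V_{\s, 1}(v, \tau) = \int v^\s/\tau\,d\bxi$ by splitting according to $|h(\bxi)-\mu|$. By Condition \ref{h:cond}(2) and compactness of $S = \p\Om$ (since $\Om$ is bounded by \eqref{equih:eq}), $|\nabla h|$ is bounded above and below on a neighbourhood of $S$, so the coarea formula is available. On $\{|h-\mu| \le T\}$ we have $v \asymp 1$, $\tau \asymp T(1+|\bxi|)^{-\b_2}$, and the region has volume $\asymp T$, producing $O(1)$. On the transition region $\{T \le |h-\mu| \le 1\}$, $v \asymp 1$ and $\tau \asymp |h-\mu|(1+|\bxi|)^{-\b_2}$, so the coarea formula yields $\int_T^1 ds/s \asymp |\log T|$. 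On $\{|h-\mu| > 1\}$ the exponential decay of $v$ together with $\tau \gtrsim T$ gives $O(1)$. Summing and plugging into Theorem \ref{multi:thm} produces \eqref{entropy:eq}.

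The main obstacle is the construction of $(v, \tau)$ simultaneously satisfying \eqref{scales:eq}, \eqref{Lip:eq} and \eqref{w:eq} with $T$-independent constants while achieving $V_{\s,1}(v,\tau) = O(|\log T| + 1)$. The tension is that the constraint $|a| \le v$ forces $v \gtrsim 1$ on the bounded neighbourhood of $\overline{\Om}$, even though $a(1-a)$ is much smaller there; a naive choice like $v = a(1-a)$ would violate $|a| \le v$ on the Fermi sea. Keeping $v \asymp 1$ in the transition zone, where $\tau$ degenerates to the scale $T$, is exactly what forces the $|\log T|$ from $\int_T^1 ds/s$. This logarithmic divergence is the fingerprint of the Widom-type asymptotics \eqref{Widom_conj:eq} for the discontinuous $T=0$ symbol $\chi_\Om$, and its sharpness in $d=1$ is confirmed a posteriori by the asymptotic formulas of Section \ref{sect:asympt}.
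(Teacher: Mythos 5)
Your overall strategy is exactly the paper's: build a multi-scale pair $(\tau,v)$ adapted to the Fermi surface, show $V_{\s,1}(v,\tau)\asymp|\log(T)|+1$, and feed this into Theorem \ref{multi:thm} with $q=1$. The near-surface analysis (scale $\asymp|h-\mu|+T$, amplitude $\asymp1$, coarea integration giving $\int_T^1 s^{-1}ds$) matches the paper's Lemma \ref{Vso:lem}, and your closing discussion of why the logarithm is unavoidable is correct.

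There is, however, a genuine flaw in your treatment of the region away from the Fermi surface. You take $v\asymp e^{-(h-\mu-1)/T}$ on $\{h>\mu+1\}$ and justify the slow-variation condition \eqref{w:eq} by saying that $h$ varies by at most $O(T)$ over a ball of radius $\tau(\bxi)$. That is only true where $\min\{|h-\mu|,1\}\lesssim T$; on $\{h>\mu+1\}$ your scale is $\tau(\bxi)\asymp T+\d(1+|\bxi|)^{-\b_2}\gg T$, so $h$ varies by $O(1)$ over such a ball and the ratio $v(\boldeta)/v(\bxi)=e^{-(h(\boldeta)-h(\bxi))/T}$ can be as large as $e^{C/T}$. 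Thus \eqref{w:eq} fails with $T$-independent constants $C_1,C_2$, and since (per Remark \ref{uniform:rem}) the constants in Lemma \ref{cross_smooth:lem} and Theorem \ref{multi:thm} depend on $C_1,C_2$, your final constant would blow up as $T\downarrow0$. Concretely, the step in the proof of Lemma \ref{cross_smooth:lem} that converts $\tau_j^{d-1}v_j^q$ into $\int_{B_j}\tau^{-1}v^q$ loses a factor $e^{CqT^{-1}}$. The fix is what the paper does: away from $S$ the exponential bounds on $a$ and $\nabla^n a$ imply the $T$-uniform bounds $|\nabla^n a(\bxi)|\le C_n(1+|\bxi|)^{-(d+1)/\s}$, so one may take the \emph{fixed} power-law amplitude $\tilde v(\bxi)=(1+|\bxi|)^{-(d+1)/\s}$ and $\tau\asymp1$ there; this choice satisfies \eqref{w:eq} trivially and contributes $O(1)$ to $V_{\s,1}$. (A secondary point: keeping the factor $(1+|\bxi|)^{-\b_2}$ in $\tau$ at infinity is unnecessary and, combined with a power-law $v$, would make $\int v^\s/\tau$ divergent when $\b_2\ge1$; taking $\tau\asymp1$ there avoids this as well.)
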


Until the end of this section we always assume that the region $\L$ and the function $h$ 
satisfy Conditions \ref{domain:cond} and \ref{h:cond} respectively.

Because of \eqref{equih:eq} the set $\Om$ is 
bounded, so that $\Omega\subset B(\mathbf0, R_0)$ 
with some $R_0 >0$. Assume first that $d\ge 2$. 
Due to condition 
\eqref{nondeg:eq}, the set $S$ is locally a $\plainC\infty$-surface, 
which is called the \textit{Fermi surface}.
 More precisely, for any $\bxi_0\in S$ 
 there is a radius $r>0$ such that $|\p_{\xi_d}h(\bxi)|\ge c$ for 
 all $\bxi\in B(\bxi_0,2r)$ with a suitable choice of coordinates    
 $\bxi = (\hat\bxi, \xi_d)$, and hence there exists a 
 function $\Psi\in\plainC\infty(\R^{d-1})$ 
 such that  
 \begin{equation}\label{boundary:eq}
 S\cap B(\bxi_0, 2r)
 = \{\bxi\in\R^d: \xi_d = \Psi(\hat\bxi)\}\cap B(\bxi_0, 2r). 
 \end{equation}
 For definiteness we assume that $B(\bxi_0, 2r)
 \subset B(\mathbf0, R_0)$. 
 We may also assume that 
 \begin{equation}\label{telo:eq}
 \Om\cap B(\bxi_0, 2r) 
 = \{\bxi\in\R^d: \xi_d > \Psi(\hat\bxi)\}\cap B(\bxi_0, 2r).
\end{equation} 
 This can be achieved by replacing $\xi_d$ and 
 $\Psi(\hat\bxi)$ with $-\xi_d$ and $-\Psi(\hat\bxi)$ and by taking a smaller $r$, 
 if necessary. Without loss of generality we 
 may assume that $\|\nabla\Psi\|_{\plainL\infty}\le M$ 
 with  some constant $M>0$. 
 By choosing a sufficiently small $r>0$, due to the condition $|\p_{\xi_d}h|\ge c$ 
 one can also guarantee that 
\begin{equation}\label{twosided:eq}
|\xi_d-\Psi(\hat\bxi)|\asymp |h(\bxi)-\mu|,
\  \bxi\in B(\bxi_0, 2r),
 \end{equation}
 with some $C\ge 1$. 
It is clear that $|\xi_d - \Psi(\hat\bxi)|\ge \dist(\bxi, S)$. 
On the other hand,  
$|\bxi-\boldeta|\ge (1+M^2)^{-1/2} |\xi_d-\Psi(\hat\bxi)|$, 
for any $\bxi \in B(\bxi_0, 2r)$ and any $\boldeta\in S\cap B(\bxi_0, 2r)$. 
Consequently,
\begin{equation}\label{distvs:eq}
|\xi_d - \Psi(\hat\bxi)|\asymp \dist(\bxi, S),\ \forall \bxi\in B(\bxi_0, r).
\end{equation}
Since the set $\Omega$ is in fact a $\plainC\infty$-
region, we can cover its boundary $S$ with finitely many open balls 
$\{D_j(r)\}$ of radius $r$ centred at 
some $\bxi_j\in S$, such that in each $D_j(2r)$ one 
can find an appropriate function $\Psi=\Psi_j$ 
that satisfies the properties \eqref{boundary:eq}--\eqref{twosided:eq} 
after an appropriate choice of 
coordinates in every ball $D_j(2r)$. 
From now on for brevity we denote $D_j = D_j(r)$. 

Let $\tilde D\subset \R^d$ be a region such 
that $\tilde D\cap S = \varnothing$, and 
\begin{equation}\label{coverrd:eq}
\R^d = (\cup_j D_j) \cup\tilde D,\  \ 
\tilde D = (\cup_j \tilde D_j) \cup \{\bxi\in\R^d: |\bxi|>R_0\}.
\end{equation}  
If $d = 1$, then we modify the definitions of 
$\{D_j\}$ and $\tilde D$ in an obvious way. 
For example, each $D_j$ is now an interval such 
that with an appropriate choice of the 
coordinate $\xi$  the open set 
$D_j \cap \Om$ is simply $D_j\cap \{\xi\in\R: \xi >0\}$. Thus the covering 
\eqref{coverrd:eq} holds for $d = 1$ as well. 

The idea of the proof of Theorem \ref{entropy:thm} is to observe that the symbol 
\eqref{positiveT:eq} satisfies 
\eqref{scales:eq} on each element of the covering 
\eqref{coverrd:eq} with some functions $\tau$ and $v$ defined individually on each 
of the domains $D_j$ and $\tilde D$. After that Theorem \ref{multi:thm} 
produces Theorem \ref{entropy:thm}. 

Let us first describe the construction of the scaling function 
$\tau$ and amplitude function $v$ on $D_j$ and $\tilde D$. 
We do this for the case $d\ge 2$, 
as for $d=1$ only obvious modifications are required. 

Let $\Psi=\Psi^{(j)}\in\plainC\infty(\R^{d-1})$ be 
a function describing the surface $S$ inside $D_j$, 
see \eqref{boundary:eq}. Recall that we always assume that 
$\|\nabla\Psi\|_{\plainL\infty}\le C$. 
We introduce the functions 
$\ell^{(j)}$ and $w^{(j)}$ defined on $\R^d$ as
\begin{equation}\label{lj:eq}
\ell^{(j)}(\bxi) := |\xi_d - \Psi^{(j)}(\hat{\bxi})| + T,\ 
w^{(j)}(\bxi) := \exp\biggl(-c_1\dfrac{\ell^{(j)}(\bxi)}{T}\biggr) .
\end{equation}
Due to \eqref{step:eq}, \eqref{a1-a:eq} and \eqref{twosided:eq},  
the constant $c_1$ can be chosen to guarantee that 
\begin{equation}\label{a11:eq}
|a(\bxi) - \chi_\Om(\bxi)|
\le w^{(j)}(\bxi),\ \bxi\in D_j,
\end{equation}
and
\begin{equation}\label{a1:eq}
a(\bxi)\bigl(1-a(\bxi)\bigr)
\le w^{(j)}(\bxi),\ \bxi\in D_j.
\end{equation}
Since $D_j\subset B(\mathbf0, R_0)$, we get from Lemma \ref{hbounds:lem} that for $\bxi\in D_j$
\begin{equation*}
	|\nabla^n a(\bxi)|\le C_n T^{-n} a(\bxi)(1-a(\bxi))
	\le \tilde C_n T^{-n} w^{(j)}(\bxi), C_n = C_n(R_0). 
\end{equation*} 
Using the fact that $\sup_{t>0}t^n e^{-t}$ is finite for 
all $n = 0, 1, \dots, $ we can estimate the right-hand side by 
$C_n \ell^{(j)}(\bxi)^{-n}$. 
Therefore
\begin{equation}\label{ltau_0:eq}
|\nabla^n a(\bxi)|\le C_n \ell^{(j)}(\bxi)^{-n},
\ n = 0, 1, 2, \dots,\  \forall \bxi\in D_j. 
\end{equation}
This shows that on $D_j$ the symbol 
$a$ satisfies \eqref{scales:eq} with 
$\tau(\bxi) := \ell^{(j)}(\bxi)$ and $v(\bxi) := v^{(j)}(\bxi) = 1$. 
 
On the domain $\tilde D$ the construction is different. 
Define the function $\widetilde{w}$ as
\begin{equation}\label{l_tilde:eq}
\widetilde{w}(\bxi) := \exp\biggl(-c_1\frac{(1+|\bxi|)^{\b_1}}{T}
\biggr),\ \bxi\in\R^d. 
\end{equation}
Since 
$h$ satisfies \eqref{equih:eq}, and 
$|h(\bxi)-\mu|\ge c$ for $\bxi\in \tilde D $, 
one can find a constant $c_1>0$ such that 
\begin{equation*}
	\exp\biggl(
	-\frac{|h(\bxi)-\mu|}{T}
	\biggr)\le \widetilde{w}(\bxi)^2,\ \bxi\in \tilde D.
\end{equation*}
Hence, by \eqref{step:eq} and \eqref{a1-a:eq}, 
\begin{equation}\label{a21:eq}
|a(\bxi) - \chi_\Om(\bxi)|\le \widetilde{w}(\bxi)^2,\ \bxi\in\tilde D,
\end{equation}
and 
\begin{equation}\label{a2:eq}
a(\bxi)\bigl(1-a(\bxi)\bigr)\le \widetilde{w}(\bxi)^2,\ \bxi\in \tilde D.
\end{equation}
Consequently by Lemma \ref{hbounds:lem}, 
\begin{equation*}  
	|\nabla^n a(\bxi)|\le C_n T^{-n} (1+|\bxi|)^{\b_2 n} \widetilde{w}(\bxi)^2,
	%
	n = 1, 2, \dots,
\end{equation*}
for all $\bxi\in \tilde D$.  
Using the fact that $\sup_{t\ge 1}(t^{\b_2} T^{-1})^n e^{-c_1t^{\b_1}T^{-1}} \le C(n, T_0)$  
for all $T\in (0, T_0]$ and $n = 1, 2, \dots$, 
we conclude that 
\begin{equation*} 
	|\nabla^n a(\bxi)|\le  \tilde C_n \widetilde{w}(\bxi), \ n = 1, 2, \dots, \ \bxi\in\tilde D.
\end{equation*}
This implies that with a suitable constant $c_2 = c_2(T_0, h)$,
\begin{equation*}
	|\nabla^n  a(\bxi)|\le C_n 
	e^{-c_2|\bxi|^{\b_1}}, 
	\ n = 0, 1, \dots,\ \bxi\in\R^d.
\end{equation*}
It is more convenient to replace the exponential by a power-like function 
$\tilde v(\bxi) := \tilde v_\s(\bxi) := (1+|\bxi|)^{-(d+1)\s^{-1}}$ with some 
$\s\in (0, 1]$, so that for all $\bxi\in \tilde D$
\begin{equation}\label{ltautilde:eq}
|\nabla^n a(\bxi)|\le C_n (1+|\bxi|)^{-(d+1)\s^{-1}},\ n = 0, 1, \dots. 
\end{equation} 
Thus $a$ satisfies \eqref{scales:eq} with $\tilde\tau= 1$ and 
$v = \tilde v_\s$. The choice of the value of $\s$ will be made later. 

Now we can put together the definitions of $\ell^{(j)}, v^{(j)}$ and 
$\tilde\tau, \tilde v$ to define the scaling function and amplitude on the entire 
space. Let $\{\phi_j\}, \tilde\phi$ be a partition of 
unity subordinate to the covering 
\eqref{coverrd:eq}. Then we define for $\bxi\in\R^d$
\begin{equation}\label{tau_global:eq}
\begin{cases}
\tau(\bxi) := \theta\bigl(\sum_j \ell^{(j)}(\bxi) 
\phi_j(\bxi) + \tilde\phi(\bxi)\bigr),\\[0.2cm]
v(\bxi) := \sum_{j} \phi_j(\bxi) + (1+|\bxi|)^{-\frac{d+1}{\s}}\tilde\phi(\bxi).
\end{cases}
\end{equation} 
The constant $\theta >0$ is chosen to guarantee the bound 
$\|\nabla\tau\|_{\plainL\infty}\le \nu$ with some $\nu\in (0, 1)$. 
It is straightforward to check that $v$ satisfies \eqref{w:eq} and
that $\tau\asymp 1$ on $\tilde D$. 
 Moreover, by virtue of \eqref{distvs:eq}, $\tau\asymp \ell^{(j)}$ 
on $D_j$. Consequently, the symbol $a$ satisfies 
\eqref{scales:eq} with the functions $\tau$ and $v$ defined above.

Let us establish some bounds for $V_{\s, \rho}(v, \tau)$, see \eqref{Vsigma:eq}.  

\begin{lem}\label{Vso:lem}
Let $T\in (0, T_0]$. Let $\tau$ be defined as in \eqref{tau_global:eq} with 
the same $\s \in (0, 1]$ as in \eqref{tau_global:eq}. Then 
\begin{equation}\label{dj:eq}
V_{\s, 1}(v, \tau)\asymp \biggl|\log(T)\biggr| + 1,
\end{equation}
\begin{equation}\label{djm:eq}
V_{\s, \rho}(v, \tau)\le C_{\s, \rho} T^{-\rho+1},\ \rho > 1,
\end{equation}
with a constant independent of $T\in (0, T_0]$.
\end{lem}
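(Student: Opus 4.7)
The plan is to split the integral
$V_{\s,\rho}(v,\tau) = \int v(\bxi)^\s\tau(\bxi)^{-\rho}d\bxi$
according to the covering \eqref{coverrd:eq}, estimate the contribution from each $D_j$ using the local boundary description \eqref{boundary:eq}, and show that the contribution from $\tilde D$ is bounded uniformly in $T\in (0,T_0]$. On $D_j$ we have $v\asymp 1$ and, by \eqref{distvs:eq} together with the definition \eqref{tau_global:eq}, $\tau(\bxi)\asymp \ell^{(j)}(\bxi) = |\xi_d - \Psi^{(j)}(\hat\bxi)|+T$; on $\tilde D$ we have $\tau\asymp 1$ and $v(\bxi)\asymp (1+|\bxi|)^{-(d+1)/\s}$ for $|\bxi|>R_0$ while $v\asymp 1$ on the bounded part of $\tilde D$. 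Hence the $\tilde D$-contribution to $V_{\s,\rho}(v,\tau)$ is bounded by $C\int (1+|\bxi|)^{-(d+1)}d\bxi + C$, which is finite and independent of $T$ and $\rho$.

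Next I would pass to the local coordinates $(\hat\bxi,\xi_d)$ on each $D_j$ in which the surface $S$ is the graph $\xi_d = \Psi^{(j)}(\hat\bxi)$. Fubini then reduces the $D_j$-integral to the one-dimensional integral
\begin{equation*}
\int_{\hat D_j}\int_{I_{\hat\bxi}}
\bigl(|\xi_d - \Psi^{(j)}(\hat\bxi)|+T\bigr)^{-\rho}\,d\xi_d\,d\hat\bxi
\end{equation*}
over the bounded projection $\hat D_j$ and a bounded $\xi_d$-interval $I_{\hat\bxi}$ of length $\asymp r$. The substitution $s = \xi_d-\Psi^{(j)}(\hat\bxi)$ turns the inner integral into $\int_{-C}^{C}(|s|+T)^{-\rho}ds$. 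For $\rho=1$ this equals $2\log\bigl((C+T)/T\bigr)\asymp |\log T|+1$; for $\rho>1$ it equals $\tfrac{2}{\rho-1}\bigl(T^{-\rho+1}-(C+T)^{-\rho+1}\bigr)\le C_\rho T^{-\rho+1}$. Integrating over $\hat D_j$ and summing over the finitely many patches yields the upper bounds in \eqref{dj:eq} and \eqref{djm:eq}, and combining with the $\tilde D$-estimate yields the claimed $V_{\s,1}(v,\tau)\le C(|\log T|+1)$ and $V_{\s,\rho}(v,\tau)\le C_{\s,\rho}T^{-\rho+1}$.

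For the matching lower bound in \eqref{dj:eq} I would restrict the integral to a single ball $D_{j_0}$, where $v\ge c$ and $\tau\le C\ell^{(j_0)}$. The same one-dimensional reduction then gives
\begin{equation*}
V_{\s,1}(v,\tau)\ge c\int_{\hat D_{j_0}}
\int_{I_{\hat\bxi}}\frac{d\xi_d}{|\xi_d-\Psi^{(j_0)}(\hat\bxi)|+T}\,d\hat\bxi
\ge c'\bigl(|\log T|+1\bigr)
\end{equation*}
for $T\in (0,T_0]$, since the inner integral is bounded below by $c''|\log T|$ as soon as the $\xi_d$-interval contains a neighbourhood of $\Psi^{(j_0)}(\hat\bxi)$ of fixed size. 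This, together with the upper bound, gives \eqref{dj:eq}.

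The computation is essentially routine once the local structure of $\tau$ near $S$ is made explicit; the only point that requires a small amount of care is making sure the local coordinates and the implicit constants in $\asymp$ are controlled uniformly across the finitely many patches $D_j$ and uniformly in $T\in(0,T_0]$. This uniformity is what allows the finite covering and the partition-of-unity factors in \eqref{tau_global:eq} to be absorbed into the constants $C_{\s,\rho}$, and is where I would expect to spend the bulk of the bookkeeping.
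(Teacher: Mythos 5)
Your proposal is correct and follows essentially the same route as the paper: split $V_{\s,\rho}$ over the covering \eqref{coverrd:eq}, use $\tau\asymp\ell^{(j)}$ and $v\asymp 1$ on each $D_j$ to reduce to the one-dimensional integral $\int_{-2r}^{2r}(|t|+T)^{-\rho}\,dt$, and bound the $\tilde D$-contribution by a $T$-independent constant. Your explicit treatment of the lower bound in \eqref{dj:eq} via a single patch is just an unpacking of the two-sided $\asymp$ the paper uses throughout; no substantive difference.
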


\begin{proof}
We estimate integrals of the type \eqref{Vsigma:eq} over the domains 
that form the covering \eqref{coverrd:eq}. Denote
\begin{equation*}
V^{(j)}_{\s, \rho}(v, \tau) 
:= \int \phi_j(\bxi)\frac{v(\bxi)^\s}{\tau(\bxi)^\rho}
d\bxi,\ \ \ 
\tilde V_{\s, \rho}(v, \tau) 
:= \int \tilde\phi(\bxi)\frac{v(\bxi)^\s}{\tau(\bxi)^\rho}
d\bxi.
\end{equation*}
As we have observed previously, $\tau\asymp \ell^{(j)}$ on $D_j$, so that 
\begin{align*}
V^{(j)}_{\s, \rho}(v, \tau)
\asymp &\ \underset{D_j}\int (|\xi_d-\Psi^{(j)}(\hat\bxi)| + T)^{-\rho} 
d\bxi\\[0.2cm]
	\asymp &\ r^{d-1}
		\int_{-2r}^{2r} (|t|+T)^{-\rho}dt.  
\end{align*}
This leads to \eqref{dj:eq} and \eqref{djm:eq} for $V^{(j)}_{\s, \rho}$. 
Furthermore, $\tau(\bxi)\asymp 1$ for all $\bxi\in \tilde D$. Therefore 
\begin{equation*}
\tilde V_{\s, \rho}(v, \tau)\asymp \int (1+|\bxi|)^{-d-1} d\bxi\le C, 
\end{equation*}
for any $\rho\in\R$. The obtained bounds together prove 
\eqref{dj:eq} and \eqref{djm:eq}.
\end{proof}

\begin{proof}[Proof of Theorem \ref{entropy:thm}]  
	We use \eqref{multi:eq} 
	with $q = 1$ and any $\s\in (0, \g)$, $\s\le 1$:
\begin{equation*}
\| D_\a(a, \L; f)\|_{\GS_1}\le C_{\s, \g} \a^{d-1}\1 f\1_n R^{\g-\s} 
V_{\s, 1}(v,\tau).
\end{equation*}
Now Lemma  \ref{Vso:lem} leads to \eqref{entropy:eq}. 
\end{proof}

The case of a homogeneous function $h$ deserves special attention since in this case one can 
explicitly control the dependence on the chemical potential $\mu$. 
We illustrate this with the example of 
the function $h(\bxi) = |\bxi|^2$. The parameter $\mu$ can be ``scaled out" with the help 
of the following formula:
\begin{equation*}
\op_\a(a_{T, \mu}) = \op_\nu (a_{T', 1}),\ T' := T\mu^{-1},\ \nu := \a\sqrt\mu,
\end{equation*}
so that $D_\a(a_{T, \mu}, \L; f) = D_\nu(a_{T', 1}, \L; f)$. 
Thus Theorem \ref{entropy:thm} leads to the following result.

\begin{thm}\label{entropy_spec:thm} 
	Suppose that $f$ satisfies Condition \ref{f:cond} with some $n\ge 2$ and $\g >0$, 
	and that the region $\L$ satisfies Condition \ref{domain:cond}. 
	Let $a = a_{T, \mu}$ be given by \eqref{positiveT:eq} with 
	$h(\bxi) = |\bxi|^2$, and let 
	$\a T\mu^{-1/2}\ge \a_0, 0<T\mu^{-1}\le T_0$.  
		Then for any $\s\in (0, \g)$, $\s \le 1$, 
	\begin{equation}\label{entropy_spec:eq}
	\bigl\| 
	D_\a(a_{T, \mu}, \L; f)\bigr\|_{\GS_1}
	\le C_\s 
	R^{\g-\s}  \1 f\1_n(\a\sqrt{\mu})^{d-1}(|\log (T\mu^{-1})| + 1),
	\end{equation}
	with a constant $C_\s$ independent of $R, \a, \mu$,
 and the function $f$, but depending on 
	$\a_0, T_0$ and $\g, \s$.
\end{thm}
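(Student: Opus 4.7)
The plan is to exploit the homogeneity of $h(\bxi)=|\bxi|^2$ to reduce the statement to Theorem \ref{entropy:thm} applied at $\mu=1$, via a rescaling of the momentum variable that absorbs $\mu$ into both the quasiclassical parameter $\a$ and the temperature $T$.

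First I would verify the scaling identity already flagged in the paragraph preceding the theorem, namely
\begin{equation*}
\op_\a(a_{T,\mu}) = \op_\nu(a_{T',1}),\quad T':=T\mu^{-1},\quad \nu:=\a\sqrt{\mu}.
\end{equation*}
The key observation is that $a_{T,\mu}(\bxi)=a_{T',1}(\bxi/\sqrt{\mu})$ for $h(\bxi)=|\bxi|^2$, so substituting $\bxi=\sqrt{\mu}\,\boldeta$ in the defining oscillatory integral of $\op_\a(a_{T,\mu})$ produces $d\bxi=\mu^{d/2}d\boldeta$ and phase $\a\sqrt{\mu}\,\boldeta\cdot(\bx-\by)$; collecting the prefactor $\a^d\mu^{d/2}=\nu^d$ gives exactly $\op_\nu(a_{T',1})$. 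Since multiplication by $\chi_\L$ is untouched by this manipulation and $f$ acts spectrally, the identity extends to $D_\a(a_{T,\mu},\L;f)=D_\nu(a_{T',1},\L;f)$.

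Next I would check that the hypotheses of Theorem \ref{entropy:thm} are met by $(\nu,T',h(\boldeta)=|\boldeta|^2,\mu=1)$. The function $h(\boldeta)=|\boldeta|^2$ trivially satisfies Condition \ref{h:cond}: it is smooth and real-valued, \eqref{equih:eq} holds with $\b_1=2$, \eqref{derivh:eq} with $\b_2=1$, and the unit Fermi sea $\{|\boldeta|<1\}$ is a smooth bounded region with $\nabla h=2\boldeta\ne 0$ on $S=\{|\boldeta|=1\}$. The constraints on the parameters of Theorem \ref{entropy:thm} translate into
\begin{equation*}
\nu T' = \a T\mu^{-1/2}\ge \a_0,\qquad T' = T\mu^{-1}\le T_0,
\end{equation*}
which are precisely the hypotheses of Theorem \ref{entropy_spec:thm}.

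Finally I would invoke Theorem \ref{entropy:thm} to obtain
\begin{equation*}
\|D_\nu(a_{T',1},\L;f)\|_{\GS_1}\le C R^{\g-\s}\nu^{d-1}(|\log T'|+1)\1 f\1_n,
\end{equation*}
and substitute back $\nu=\a\sqrt{\mu}$, $T'=T\mu^{-1}$ to get \eqref{entropy_spec:eq}. The constants depend on $\a_0,T_0$ and on the data $h(\boldeta)=|\boldeta|^2,\mu=1$, which are now fixed, so the resulting constant is independent of $\a$ and $\mu$, as claimed. There is really no hard step here: the entire proof reduces to a clean dimensional-analysis argument; the only point requiring care is bookkeeping of the factor $\mu^{d/2}$ that converts $\a^d$ into $\nu^d$ in the oscillatory integral, which ensures the scaling identity for $\op_\a$ is exact and not merely up to a constant.
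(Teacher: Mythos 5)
Your proof is correct and follows exactly the paper's route: the paper derives Theorem \ref{entropy_spec:thm} from Theorem \ref{entropy:thm} via the same scaling identity $\op_\a(a_{T,\mu})=\op_\nu(a_{T',1})$ with $\nu=\a\sqrt{\mu}$, $T'=T\mu^{-1}$, which it records in the paragraph preceding the statement. Your verification of the change of variables and of the translated hypotheses matches the intended argument.
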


The final result in this section is specific to dimension one. 

\begin{thm}\label{interval_lowT:thm} 
Let $I$ and $\omega$ be defined as in \eqref{Kint:eq} 
and \eqref{omega:eq} respectively, and let the constituent intervals $I_j$ satisfy 
\eqref{mult_int:eq}. Suppose $h$ satisfies 
Condition \ref{h:cond} and that $f$ satisfies Condition \ref{f:cond} with some $\g >0$, 
$t_0\in\R$ and $n=2$. Furthermore, suppose that $T\in (0,1/2]$ and $\a T\ge \a_0>0$. Then 
	\begin{equation}\label{fn_lowT:eq}
	\underset{\a\to\infty}\lim\ \frac{1}{|\log(T)|}
	\bigl(\tr D_\a(a_{T, \mu}, I; f) - \omega \CB(a_{T, \mu}; f)\bigr) = 0,
	\end{equation}
	uniformly in $t_0\in\R$. Moreover, for any  
	$T\in (0, 1/2]$,
\begin{equation}\label{bbound:eq}
|\CB(a_{T, \mu}; f)|\le C_{\g, \s}\1 f\1_2 \,|\log(T)|,
\end{equation}
	uniformly in $t_0\in\R$.
	\end{thm}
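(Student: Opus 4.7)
The plan is to apply Theorem \ref{scale_asymp:thm} for \eqref{fn_lowT:eq} and Proposition \ref{scales:prop} for \eqref{bbound:eq}, invoking in both cases the multi-scale description of the Fermi symbol developed in Section \ref{gen_bounds:sect}. In dimension one, $a_{T,\mu}$ satisfies \eqref{scales:eq} with the amplitude $v$ and scale $\tau$ built in \eqref{tau_global:eq}; the construction forces $\|v\|_{\plainL\infty}\le 1$ and $\tau_{\textup{\tiny inf}}\asymp T$, so that \eqref{tau_low:eq} reduces to $\a T\ge c\a_0$, precisely the assumed regime. Lemma \ref{Vso:lem} then supplies the two crucial estimates $V_{\s,1}(v,\tau)\asymp|\log T|+1$ and $V_{1,m}(v,\tau)\le CT^{-(m-1)}$ for any $m\ge 2$.

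For \eqref{fn_lowT:eq}, I would fix $\s\in(0,\g)\cap(0,1]$ (nonempty since $\g>0$) and any $m\ge 3$. The critical hypothesis \eqref{higher:eq} then reduces to
\begin{equation*}
\a^{-(m-1)}\,\frac{V_{1,m}(v,\tau)}{V_{\s,1}(v,\tau)}\le \frac{C\,(\a T)^{-(m-1)}}{|\log T|+1}\longrightarrow 0\quad\text{as}\ \a\to\infty,
\end{equation*}
which holds under the constraint $\a T\ge \a_0$. Theorem \ref{scale_asymp:thm} then produces $V_{\s,1}(v,\tau)^{-1}\bigl(\tr D_\a(a_{T,\mu},I;f)-\omega\CB(a_{T,\mu};f)\bigr)\to 0$. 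Since $T\le 1/2$ implies $|\log T|\ge \log 2$ and hence $V_{\s,1}(v,\tau)\asymp|\log T|$, dividing by $|\log T|$ instead yields \eqref{fn_lowT:eq}. Uniformity in $t_0$ is inherited from Theorem \ref{scale_asymp:thm}, whose constants involve $f$ only through the norm $\1 f\1_2$ and do not depend on $t_0$ directly.

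For \eqref{bbound:eq}, I would invoke Proposition \ref{scales:prop} with the same $\s$ and $n=2$:
\begin{equation*}
|\CB(a_{T,\mu};f)|\le C_\s\,\1 f\1_2\, R^{\g-\s}\, V_{\s,1}(v,\tau)\le C_{\g,\s}\,\1 f\1_2\, R^{\g-\s}\,|\log T|.
\end{equation*}
Since $a_{T,\mu}$ takes values in $[0,1]$, the coefficient $\CB(a_{T,\mu};f)$ depends on $f$ only through its restriction to a neighbourhood of $[0,1]$; one may therefore replace $f$ by $f\cdot\zeta$ for a cutoff $\zeta$ supported on a bounded interval near $[0,1]$ without altering $\CB$, absorbing $R^{\g-\s}$ into the constant $C_{\g,\s}$ and obtaining uniformity in $t_0$.

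The main technical point is the verification of \eqref{higher:eq} in the admissible regime: the $T^{-(m-1)}$ growth of $V_{1,m}$ must be balanced against the logarithmic growth of $V_{\s,1}$, which succeeds precisely because the hypothesis $\a T\ge \a_0$ forces $(\a T)^{-(m-1)}$ to vanish as $\a\to\infty$. Once this balance is in place, both claims follow mechanically from the general framework assembled in Sections \ref{sect:estimates}--\ref{proofs:sect}.
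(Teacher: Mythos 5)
Your proposal is correct and follows essentially the same route as the paper: equip $a_{T,\mu}$ with the scale and amplitude of \eqref{tau_global:eq}, use Lemma \ref{Vso:lem} to verify \eqref{higher:eq} in the regime $\a T\ge\a_0$ and then invoke Theorem \ref{scale_asymp:thm} for \eqref{fn_lowT:eq}, and combine \eqref{dj:eq} with Proposition \ref{scales:prop} for \eqref{bbound:eq}. Your extra remark on localising $f$ near the range of $a_{T,\mu}$ to absorb the factor $R^{\g-\s}$ is a sensible clarification of a point the paper leaves implicit.
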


\begin{proof}[Proof of Theorem \ref{interval_lowT:thm}]
Define the scale and the amplitude as in \eqref{tau_global:eq}.
Then the $\log$-bound \eqref{dj:eq}, together with 
\eqref{coeffscales:eq} imply \eqref{bbound:eq}.

In order to prove \eqref{fn_lowT:eq} we use the asymptotics 
\eqref{scale_asymp:eq}. First we check the condition \eqref{higher:eq}. 
By \eqref{dj:eq} and \eqref{djm:eq},  
the left-hand side of \eqref{higher:eq} is estimated by 
\begin{equation*}
\a^{-m+1} \frac{V_{\s, m}(v, \tau)}
{V_{\s, 1}(v, \tau)}\le C(\a T)^{-m+1}\frac{1}{|\log(T)|}, 
\end{equation*}
and hence it tends to zero under the conditions $\a T \ge \a_0$ and $\a\to\infty$. 
As a result, the condition \eqref{higher:eq} is satisfied, and therefore 
one can use \eqref{scale_asymp:eq}, which leads to \eqref{fn_lowT:eq}, as required. 
\end{proof}

The above formulas hold for arbitrary $T$ satisfying the condition 
$\a T\ge \a_0$. If we assume additionally that $T\downarrow 0$, 
then the asymptotics \eqref{fn_lowT:eq} can be written in a more 
explicit form, thanks to the asymptotic formula for $\CB(a_{T, \mu}; f)$, 
$T\downarrow 0$, obtained in Theorem \ref{CBTheorem:thm}, which,  incidentally, confirms the sharpness of 
the estimate \eqref{bbound:eq}. Recall that according to 
Condition \ref{h:cond}, for $d=1$ the set $\Om$ is represented as 
\begin{align}\label{omj:eq}
\Om = \bigcup_{j=1}^N J_j, \ N < \infty,
\end{align} 
where $\{J_j\}$ are bounded open intervals such that their closures are pairwise disjoint.

\begin{cor}
Let the set $I$, number $\omega$ and the functions $h$, 
$f$ be as in Theorem \ref{interval_lowT:thm}. 
Suppose that $T\downarrow 0$ and $\a T\ge \a_0>0$. Then 
	\begin{equation} \label{fn_lowT1:eq}
	\tr D_\a(a_{T, \mu}, I; f) = 
	|\log(T)| \biggl(\frac{\omega N}{2\pi^2} U(1, 0; f) 
	+ o(1)\biggr),
	\end{equation}
	uniformly in $t_0\in\R$, where $N$ is as in \eqref{omj:eq}. 
\end{cor}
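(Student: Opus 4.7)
The plan is to combine the asymptotic identity \eqref{fn_lowT:eq} from Theorem \ref{interval_lowT:thm} with a low-temperature expansion of the coefficient $\CB(a_{T,\mu}; f)$ itself. From \eqref{fn_lowT:eq}, under the standing assumption $\a T\ge \a_0$, we have
\begin{equation*}
\tr D_\a(a_{T,\mu}, I; f) - \omega\, \CB(a_{T,\mu}; f) = o(|\log T|),
\end{equation*}
as $\a\to\infty$, uniformly in $t_0 \in \R$. Consequently, the corollary reduces to the claim
\begin{equation*}
\CB(a_{T,\mu}; f) = \frac{N}{2\pi^2}\, U(1, 0; f)\, |\log T| + o(|\log T|), \quad T\downarrow 0,
\end{equation*}
uniformly in $t_0$. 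This is precisely the content of Theorem \ref{CBTheorem:thm}, established in Section \ref{low T:sect}, which I would invoke. Substituting yields \eqref{fn_lowT1:eq}.

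The heuristic behind the $\CB$-asymptotic, which explains how the prefactor $N/(2\pi^2)$ emerges, is as follows. As $T\downarrow 0$, the Fermi symbol $a_{T,\mu}$ converges pointwise to the indicator $\chi_\Om$ of the Fermi sea, whose range is $\{0,1\}$. Since $\Om = \bigcup_{j=1}^N J_j$ has $2N$ endpoints $\xi_0\in\p\Om$, each endpoint should contribute identically to the leading-order behaviour of $\CB$. Near a fixed endpoint $\xi_0$, the substitution $t_i = (\xi_i - \xi_0)/T$ converts $a_{T,\mu}$ into a universal transition profile $\phi$, and the local contribution to \eqref{cb:eq} becomes
\begin{equation*}
\frac{1}{8\pi^2} \iint \frac{U\bigl(\phi(t_1), \phi(t_2); f\bigr)}{|t_1-t_2|^2}\, dt_1\, dt_2,
\end{equation*}
with an effective ultraviolet cutoff at $|t_i|\sim 1/T$ coming from the unit-scale size of the region in which the one-scale picture is valid. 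Since $U$ vanishes on the diagonal, the near-diagonal part of this integral is absolutely convergent and gives an $O(1)$ contribution. The logarithmic divergence comes from the far regime $t_1 \ll -1 \ll t_2$ (and its mirror), where the integrand approaches $U(1,0;f)/|t_1-t_2|^2$; a direct computation produces the factor $|\log T|$, with coefficient $U(1,0;f)/(4\pi^2)$ per endpoint. Summing over $2N$ endpoints produces the proclaimed constant $N/(2\pi^2)$.

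The main obstacle is of course the rigorous justification of this picture, i.e.\ the proof that the short-distance contribution is genuinely $O(1)$ uniformly in $T$ and in $t_0$, and the correct extraction of the leading logarithmic term from \eqref{cb:eq}. This requires splitting the integration domain into a neighbourhood of $\p\Om$ (where the blow-up above is carried out) and its complement (where $a_{T,\mu} - \chi_\Om$ is exponentially small by \eqref{step:eq} and thus contributes $O(1)$), combined with uniform H\"older bounds on $U(s_1,s_2;f)$ near $\{s_1=s_2\}$ to tame the cross terms. These analytical estimates are carried out in Section \ref{low T:sect}, yielding Theorem \ref{CBTheorem:thm}; assuming that result, the corollary follows by the substitution described above.
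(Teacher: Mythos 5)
Your proposal is correct and is exactly the paper's argument: the corollary is obtained by combining the limit \eqref{fn_lowT:eq} of Theorem \ref{interval_lowT:thm} with the low-temperature expansion \eqref{CBas:eq} of $\CB(a_{T,\mu};f)$ from Theorem \ref{CBTheorem:thm}. The heuristic you sketch for the origin of the coefficient $N/(2\pi^2)\,U(1,0;f)$ matches the actual proof of Theorem \ref{CBTheorem:thm} in Section \ref{low T:sect} (splitting into $\CB^{(1)}$, $\CB^{(2)}$, replacing $a_{T,\mu}$ by $\chi_\Om$, and evaluating the endpoint integrals), so nothing further is needed.
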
 
 
\begin{proof}
The claimed asymptotics follows immediately from 
Theorems \ref{interval_lowT:thm} and \ref{CBTheorem:thm}. 
\end{proof} 

\section{Asymptotics of $\CB(a_{T, \mu}; f)$ as $T\downarrow 0$}\label{low T:sect}

Here we study the behaviour of 
$\CB(a_{T, \mu}; f)$ with the Fermi
symbol $a_{T, \mu}$ defined in 
\eqref{positiveT:eq} as $T\downarrow 0$. 
The number $N$ below is as in the representation \eqref{omj:eq}. 

\begin{thm}\label{CBTheorem:thm}
Let $a_{T, \mu}$ be as in \eqref{positiveT:eq}, and let $h$ satisfy 
Condition \ref{h:cond}. Suppose that $f$ satisfies Condition \ref{f:cond} 
with some $t_0\in\R$, $\g >0$ and some $R\le 1$. Then, as $T\downarrow 0$
\begin{equation}\label{CBas:eq}
\CB(a_{T, \mu}; f) = \frac{N}{2\pi^2} \ U(1, 0; f)\,|\log(T)| + O(1),
\end{equation}
with $U(1, 0; f)$ defined in \eqref{U:eq}.
\end{thm}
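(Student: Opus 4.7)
The plan is to localize the double integral defining $\CB(a_{T,\mu};f)$ around each of the $2N$ boundary points of the Fermi sea $\Omega$, rescale by $T$, and extract the $|\log T|$ from an explicit model integral on the rescaled domain. The key inputs are: $|a_{T,\mu}-\chi_\Omega|$ and $a_{T,\mu}(1-a_{T,\mu})$ are $O(e^{-c/T})$ outside any fixed neighbourhood of $\partial\Omega$ (by \eqref{step:eq}, \eqref{a1-a:eq}); the identities $U(s,s;f)=0$ and $U(0,0;f)=U(1,1;f)=0$; and H\"older-type bounds for $U$ derived from \eqref{hol:eq}.

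\textbf{Decomposition.} Enumerate $\partial\Omega=\{\xi^{(k)}\}_{k=1}^{2N}$ (each $J_j$ contributing two endpoints) and choose disjoint intervals $D_k=(\xi^{(k)}-r,\xi^{(k)}+r)$ with $r$ small enough that $v:=h-\mu$ is a diffeomorphism on $D_k$ with $|h'|\asymp 1$. Set $\widetilde D:=\R\setminus\bigcup_k D_k$, where $|h-\mu|\ge c>0$. Split the double integral in \eqref{cb:eq} according to the partition $\R=\widetilde D\cup\bigcup_k D_k$ into ``diagonal'' blocks $D_k\times D_k$ and ``off-diagonal'' blocks. On each off-diagonal block either $|\xi_1-\xi_2|$ is bounded below (when $k\ne j$), or at least one of $a_{T,\mu}(\xi_j)$ is within $e^{-c/T}$ of $\chi_\Omega(\xi_j)$; together with $U(0,0;f)=U(1,1;f)=0$ and the H\"older estimate for $U$, this shows the off-diagonal total is $O(1)$.

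\textbf{Rescaling.} On each block $D_k\times D_k$, change variables $\xi_j\mapsto v_j:=h(\xi_j)-\mu$ and then $\eta_j:=v_j/T$, so that $a_{T,\mu}(\xi_j)=\varphi(\eta_j)$ exactly with $\varphi(\eta):=(1+e^{\eta})^{-1}$. The Taylor expansion $\xi(v)-\xi^{(k)}=v/h'(\xi^{(k)})+O(v^2)$ yields $|\xi_1-\xi_2|^2=(v_1-v_2)^2/h'(\xi^{(k)})^2\cdot[1+O(|v_1|+|v_2|)]$, and after combining Jacobians the $T$-dependence cancels. The $D_k\times D_k$ contribution becomes
\begin{equation*}
\frac{1}{8\pi^2}\iint_{|\eta_j|<R(T)}\frac{U(\varphi(\eta_1),\varphi(\eta_2);f)}{|\eta_1-\eta_2|^2}\bigl[1+O(T(|\eta_1|+|\eta_2|))\bigr]\,d\eta_1\,d\eta_2,\qquad R(T)\asymp 1/T.
\end{equation*}

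\textbf{Model integral and obstacle.} Split $[-R,R]^2$ into a bulk $[-A,A]^2$ (for fixed large $A$), the two ``cross'' quadrants $\{\eta_1>A,\eta_2<-A\}$ and its mirror, and the remaining same-side corners and mixed strips. The bulk is $O(1)$ by the H\"older bound for $U$ near the diagonal. In the cross $\{\eta_1>A,\eta_2<-A\}$, H\"older continuity of $U$ in its arguments together with $\varphi(\eta_1)=O(e^{-\eta_1})$ and $1-\varphi(\eta_2)=O(e^{\eta_2})$ gives $U(\varphi(\eta_1),\varphi(\eta_2);f)=U(1,0;f)+O(e^{-\varkappa\eta_1}+e^{\varkappa\eta_2})$; the error integrates to $O(1)$ uniformly in $R$, while the elementary computation
\begin{equation*}
\iint_{A<\eta_1<R,\,-R<\eta_2<-A}\frac{d\eta_1\,d\eta_2}{(\eta_1-\eta_2)^2}=2\log(R+A)-\log(2R)-\log(2A)=\log R+O_A(1)
\end{equation*}
supplies the logarithm. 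The mirror cross yields another copy of $U(1,0;f)\log R$, while the corners (with $U\to 0$ exponentially) and the strips are $O(1)$. Hence the model integral equals $2U(1,0;f)\log R+O(1)$; using $\log R(T)=|\log T|+O(1)$ and summing over the $2N$ boundary points yields $\frac{N}{2\pi^2}U(1,0;f)|\log T|+O(1)$. The main obstacle is showing that the curvature correction $O(T(|\eta_1|+|\eta_2|))$ contributes only $O(1)$ rather than $O(|\log T|)$; a naive term-by-term bound on the cross region suggests $O(TR\log R)=O(|\log T|)$, but the leading growth is killed by the exact cancellation $T[2R\log 2R-2(R+A)\log(R+A)+2A\log 2A]=O(1)$ that is special to the $(\eta_1-\eta_2)^{-2}$ weight, preserving the stated coefficient of $|\log T|$.
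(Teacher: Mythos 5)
Your route --- rescale $\eta=(h(\xi)-\mu)/T$ near each Fermi point and evaluate a universal model integral built from $\varphi(\eta)=(1+e^{\eta})^{-1}$ --- is genuinely different from the paper's, which first discards the near-diagonal region $|\xi_1-\xi_2|<\t T/2$ (this is $\CB^{(1)}$, controlled by Proposition \ref{sub:prop} and \eqref{djm:eq}), then replaces $a_{T,\mu}$ by $\chi_\Om$ in the remaining integral at a cost of $O(1)$, and finally evaluates the explicit integral of $|\xi_1-\xi_2|^{-2}$ over $\xi_1\notin\Om$, $\xi_2\in\Om$, $|\xi_1-\xi_2|>\t T/2$ by Lemma \ref{log_int:lem}. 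Your cross-region computation and the bookkeeping of the constant ($2N$ endpoints, prefactor $1/(8\pi^2)$, two crosses per endpoint) are correct. Your closing worry about the curvature correction is actually a non-issue: on the cross region $|\eta_1|+|\eta_2|=|\eta_1-\eta_2|$, so the correction is bounded in absolute value by $CT\iint_{A<u,v<R}(u+v)^{-1}\,du\,dv=O(TR)=O(1)$; no sign cancellation is needed, and none would be legitimately available for a term known only as $O(\,\cdot\,)$.

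The genuine gap is the diagonal. You dispose of the bulk $[-A,A]^2$ (and, implicitly, of the strip $|\xi_1-\xi_2|\lesssim T$ inside each $D_k\times D_k$ and along the seams where distinct blocks touch) ``by the H\"older bound for $U$ near the diagonal''. But Condition \ref{f:cond} only gives $f\in\plainC{0,\varkappa}$ with $\varkappa=\min\{1,\g\}$, and the H\"older estimate for $U$ (take $\mu=|s_1-s_2|$ in \eqref{approx:eq}) yields only $|U(s_1,s_2;f)|\le C|s_1-s_2|^{\varkappa}\bigl(1+\bigl|\log|s_1-s_2|\bigr|\bigr)$. Since $|\varphi(\eta_1)-\varphi(\eta_2)|\asymp|\eta_1-\eta_2|$ near the diagonal of the bulk, the resulting integrand is of order $|\eta_1-\eta_2|^{\varkappa-2}$ up to logarithms, which is not integrable across the diagonal when $\g<1$. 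Making this region $O(1)$ requires exploiting the two derivatives of $f$ away from $t_0$ provided by Condition \ref{f:cond} with $n=2$ (for most pairs the segment $(1-t)s_1+ts_2$ avoids $t_0$ and one has a quadratic bound on $U$; the exceptional pairs must be measured), which is precisely the content of Proposition \ref{sub:prop}, i.e.\ \cite[Corollary 6.5]{Sob_16}, used by the paper to bound $\CB^{(1)}$. Without that input your argument does not close for small $\g$; with it, your model-integral computation does deliver \eqref{CBas:eq}.
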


Let $\tau$ and $v$ be as defined in \eqref{tau_global:eq}, so that 
$\tau_{\textup{\tiny inf}} = \t T$. 
We study separately the integral $\CB^{(1)}$ defined in 
\eqref{cb1:eq} and 
\begin{equation}\label{cb2:eq}
\CB^{(2)}(a; f) := \frac{1}{8\pi^2}
\underset{|\xi_1-\xi_2|> \frac{\t T}{2}}\iint
\frac{U(a(\xi_1), a(\xi_2); f)}{|\xi_1-\xi_2|^2}
d\xi_1 d\xi_2,\ a = a_{T, \mu}.
\end{equation}
Using \eqref{sub:eq} and \eqref{djm:eq}, 
we conclude that for all $T\in (0, T_0]$, 
\begin{equation}\label{sub2:eq}
|\CB^{(1)}(a; f)|\le C.
\end{equation}
To study $\CB^{(2)}$ we intend to replace $a$ with the indicator function 
$\chi_\Om$. To this end we note the following properties of the function 
$f$, and as a result, of the integral \eqref{U:eq}.  
The bound \eqref{hol:eq} says that the function $f$ is H\"older:
\begin{equation*}
|f(t_1) - f(t_2)|\le 2 \1 f\1_1 |t_1-t_2|^\varkappa,\ 
\varkappa := \min\{1,\g\}. 
\end{equation*}  
An elementary calculation shows that for any $\mu\in (0, 1)$ and for any 
real $s_1, r_1, s_2, r_2$ (see \cite[Formulas (2.4) and (3.8)]{Sob_16}) we have 
\begin{align}\label{approx:eq}
	|U(s_1, s_2; f) - U(r_1, r_2; f)|\le &\ C\1 f\1_1 
	|\log(\mu)|\bigl(|s_1-r_1|^\varkappa + |s_2-r_2|^\varkappa\bigr) \notag\\[0.2cm]
	&\ + C\1 f\1_1 \mu^\varkappa
	\bigl(|s_1-s_2|^\varkappa + |r_1-r_2|^\varkappa\bigr).
\end{align}
This  leads to the following useful lemma. 

\begin{lem} Let $f$ be as above, and suppose that $|s_1-s_2|+ |r_1-r_2|\le C$. 
Then for any $\d \in [0, \varkappa)$ we have 
\begin{equation}\label{approx1:eq}
	|U(s_1, s_2; f) - U(r_1, r_2; f)|\le  C_\d\1 f\1_1 
	\bigl(|s_1-r_1|^\d + |s_2-r_2|^\d\bigr). 
\end{equation}
\end{lem}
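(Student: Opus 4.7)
The plan is to derive \eqref{approx1:eq} from \eqref{approx:eq} by optimizing the free parameter $\mu \in (0,1)$, after separating the case of large $|s_1-r_1|,|s_2-r_2|$, for which the bound will follow trivially from the boundedness of $U$.

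First I would observe two preliminary facts. Since $f$ is compactly supported (Condition \ref{f:cond}) and $\varkappa$-Hölder by \eqref{hol:eq}, a direct inspection of \eqref{U:eq} shows that $U(\cdot,\cdot;f)$ is bounded: splitting the $t$-integral into a middle region where $f$ is bounded and two boundary regions where the Hölder estimate gives an integrable singularity $t^{\varkappa-1}$ (resp.\ $(1-t)^{\varkappa-1}$), one obtains $|U(s_1,s_2;f)|\le C\1 f\1_1(1+|s_1-s_2|^\varkappa)$. Combined with the assumption $|s_1-s_2|+|r_1-r_2|\le C$ this gives a uniform bound $|U(s_1,s_2;f)-U(r_1,r_2;f)|\le C\1 f\1_1$, and also bounds $|s_1-s_2|^\varkappa+|r_1-r_2|^\varkappa$ by a constant in the second term of \eqref{approx:eq}.

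Write $a := |s_1-r_1|$, $b:=|s_2-r_2|$, and set $A := a^\varkappa+b^\varkappa$. If $A > 1$ then $\max(a,b)>1$, so $a^\delta+b^\delta \ge 1$, and the uniform bound above yields \eqref{approx1:eq} immediately. Otherwise $A \le 1$. I would choose $\mu \in (0,1]$ so that $\mu^\varkappa = A^{\delta/\varkappa}$, that is, $\mu = A^{\delta/\varkappa^2}$, and apply \eqref{approx:eq}; using $|\log\mu| = (\delta/\varkappa^2)|\log A|$ and the bound on the second factor, the right-hand side becomes
\begin{equation*}
C\1 f\1_1\Bigl(\frac{\delta}{\varkappa^2}|\log A|\cdot A + A^{\delta/\varkappa}\Bigr).
\end{equation*}
Since $\delta<\varkappa$, the function $t\mapsto |\log t|\cdot t^{1-\delta/\varkappa}$ is bounded on $(0,1]$, so $|\log A|\cdot A \le C_\delta A^{\delta/\varkappa}$. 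Hence the whole expression is bounded by $C_\delta\1 f\1_1 A^{\delta/\varkappa}$.

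To finish, I would invoke the elementary subadditivity $(x+y)^{\theta}\le x^\theta + y^\theta$ valid for $\theta=\delta/\varkappa\in(0,1]$ and $x,y\ge 0$, which gives
\begin{equation*}
A^{\delta/\varkappa} = (a^\varkappa+b^\varkappa)^{\delta/\varkappa}\le a^\delta+b^\delta,
\end{equation*}
yielding \eqref{approx1:eq}. I do not expect any genuine obstacle here: the only real step is the balancing of the two terms in \eqref{approx:eq}, and since the $|\log\mu|$ factor is comparatively mild, the choice $\mu^\varkappa = A^{\delta/\varkappa}$ handles it cleanly. The case $\delta=0$ is covered by the uniform bound directly, so may be set aside at the outset.
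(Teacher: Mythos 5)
Your proof is correct and follows essentially the same route as the paper: both arguments obtain \eqref{approx1:eq} by choosing the free parameter $\mu$ in \eqref{approx:eq} as a suitable power of the displacement (the paper simply takes $\mu=|s_1-r_1|+|s_2-r_2|$ after reducing to the case where this sum is below $1/2$, which makes the bookkeeping slightly lighter than your choice $\mu=A^{\delta/\varkappa^2}$). One cosmetic slip: $A>1$ does not imply $\max(a,b)>1$ (take $a=b=0.9$, $\varkappa=1/2$), but the conclusion you actually need, $a^\delta+b^\delta\ge 1$, still holds, since either one of $a,b$ exceeds $1$, or else $a,b\le1$ and then $a^\delta+b^\delta\ge a^\varkappa+b^\varkappa=A>1$.
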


\begin{proof}
It suffices to assume that $|s_1-r_1| + |s_2-r_2| < 1/2$. 
Now \eqref{approx1:eq} follows from \eqref{approx:eq} if one sets 
$\mu = |s_1-r_1| + |s_2-r_2|$. 
\end{proof}

\begin{lem}
Let the condition of Theorem \ref{CBTheorem:thm} 
be satisfied. Then
\begin{equation} \label{b2diff:eq}
|\CB^{(2)}(a; f) - \CB^{(2)}(\chi_\Om; f)| \le C \1 f\1_1, 
\end{equation}
uniformly in $T\in (0, T_0]$.
\end{lem}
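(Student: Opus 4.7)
The plan is to exploit the pointwise estimate \eqref{approx1:eq} for $U$ together with the exponential decay of $a_{T,\mu}-\chi_\Om$ furnished by \eqref{a11:eq} and \eqref{a21:eq}. Fix any $\d\in(0,\varkappa)$, e.g.\ $\d=\varkappa/2$. Applying \eqref{approx1:eq} with $s_k=a(\xi_k)$ and $r_k=\chi_\Om(\xi_k)$ (both of which lie in $[0,1]$, so the hypothesis $|s_1-s_2|+|r_1-r_2|\le C$ is satisfied) we obtain the pointwise bound
\begin{equation*}
\bigl|U(a(\xi_1),a(\xi_2);f)-U(\chi_\Om(\xi_1),\chi_\Om(\xi_2);f)\bigr|
\le C_\d\1 f\1_1\bigl(|a(\xi_1)-\chi_\Om(\xi_1)|^\d+|a(\xi_2)-\chi_\Om(\xi_2)|^\d\bigr).
\end{equation*}
Inserting this into the defining formula \eqref{cb2:eq} for $\CB^{(2)}$ and using the symmetry $\xi_1\leftrightarrow\xi_2$ gives
\begin{equation*}
|\CB^{(2)}(a;f)-\CB^{(2)}(\chi_\Om;f)|
\le C_\d\1 f\1_1 \int |a(\xi)-\chi_\Om(\xi)|^\d
\biggl(\,\underset{|\eta-\xi|>\t T/2}\int |\xi-\eta|^{-2}d\eta\biggr)d\xi.
\end{equation*}

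The inner integral in $\eta$ (one-dimensional) is elementary and equals $4/(\t T)$. It therefore remains to show that
\begin{equation*}
\int |a_{T,\mu}(\xi)-\chi_\Om(\xi)|^\d\, d\xi \le C T,
\end{equation*}
uniformly in $T\in(0,T_0]$. This will give the factor $T^{-1}\cdot T = 1$ and hence the uniform bound in \eqref{b2diff:eq}. For this last step I would split $\R=(\cup_j D_j)\cup\tilde D$ as in the covering \eqref{coverrd:eq} and estimate $|a-\chi_\Om|^\d$ on each piece separately. On $D_j$ we have, by \eqref{a11:eq}, $|a-\chi_\Om|^\d\le \exp\!\bigl(-c_1\d\,\ell^{(j)}/T\bigr)$, and since in dimension one $\ell^{(j)}(\xi)=|\xi-s_j|+T$ for a boundary point $s_j$ of $\Om$, a direct integration yields $\int_{D_j}|a-\chi_\Om|^\d d\xi \le C_\d T$. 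On $\tilde D$ the sharper bound \eqref{a21:eq} applies, so $|a-\chi_\Om|^\d\le \exp(-2c_1\d(1+|\xi|)^{\b_1}/T)$; a straightforward change of variables shows this integrates to a quantity bounded by $C_\d T$ (in fact it decays exponentially in $1/T$). Summing over the finitely many $j$ gives the claimed estimate.

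The argument is conceptually routine once \eqref{approx1:eq} is invoked; the only mildly delicate point is the matching of the logarithmically divergent factor $T^{-1}$ coming from the $|\xi-\eta|^{-2}$ singularity on $|\xi-\eta|>\t T/2$ with the $T$-gain from the exponential concentration of $a_{T,\mu}-\chi_\Om$ near the Fermi surface. The choice $\d\in(0,\varkappa)$ is harmless: any positive $\d$ works, since the exponential decay absorbs any power. The constant in the final bound depends on $\d$, on $\t$, and on the geometry of $\Om$, but not on $T\in(0,T_0]$ nor on $t_0$, as required.
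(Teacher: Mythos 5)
Your proposal is correct and follows essentially the same route as the paper: apply \eqref{approx1:eq} pointwise, use the symmetry in $\xi_1\leftrightarrow\xi_2$ to reduce to a single term, evaluate the inner integral to get the factor $T^{-1}$, and then show $\int|a_{T,\mu}-\chi_\Om|^\d\,d\xi\le CT$ by splitting over the covering \eqref{coverrd:eq}. No gaps.
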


\begin{proof}
In view of \eqref{approx1:eq}, 
\begin{align}\label{b2:eq}
|\CB^{(2)}(a; f) - \CB^{(2)}(\chi_\Om; f)|
\le &\ C_\d  \1 f\1_1	\underset{\frac{\t T}{2}<|\xi_1-\xi_2|}\iint  
	\frac{|a(\xi_1) - \chi_\Om(\xi_1)|^\d}{|\xi_1-\xi_2|^2}
	d\xi_1 d\xi_2\notag\\[0.2cm]
	\le &\ C_\d T^{-1} \1 f\1_1\int  
	|a(\xi) - \chi_\Om(\xi)|^\d d\xi,
\end{align}
for any $\d\in [0, \varkappa)$. To estimate this integral we use 
a partition of unity subordinate to the covering 
\eqref{coverrd:eq}, as in Section \ref{gen_bounds:sect}.
Thus, in view of \eqref{step:eq} and \eqref{twosided:eq}, for each $D_j$ we obtain
\begin{equation*}
\underset{D_j}\int 
|a(\xi) - \chi_\Om(\xi)|^\d d\xi_1
\le C\int e^{-c\d\frac{|\xi|}{T}}d\xi\le C_\d T.
\end{equation*}
Also, since the set $\tilde D$ is separated from $\Om$, 
we have 
\begin{equation*}
|a(\xi_1) - \chi_\Om(\xi_1)| \le C \exp\biggl({-\frac{(1+|\xi_1|)^{\b_1}}{T}}\biggr),\ \xi\in\tilde D,
\end{equation*}
and hence
\begin{equation*}
\underset{\tilde D}\int 
|a(\xi_1) - \chi_\Om(\xi_1)|^\d d\xi_1\le C\int e^{-c\d\frac{|\xi_1+1|^{\b_1}}{T}}d\xi_1\le C_\d e^{-\frac{c}{T}}.
\end{equation*}
Together with \eqref{b2:eq}, the above estimates lead to \eqref{b2diff:eq}
\end{proof}

It remains to calculate $\CB^{(2)}(\chi_\Om; f)$. 
Since $U(1, 1; f) = U(0, 0; f) = 0$ and $U(1, 0; f) = U(0, 1; f)$, this coefficient reduces to
 \begin{equation*}
 \CB^{(2)}(\chi_\Om; f) = \frac{U(1, 0; f)}{4\pi^2}
	\underset{\xi_1\notin\Om}\int\ \ 
	\underset{\xi_2\in\Om: \frac{\t T}{2}<|\xi_1-\xi_2|}\int  
	\frac{1}{|\xi_1-\xi_2|^2}
	d\xi_2 d\xi_1.
 \end{equation*}

The next lemma seems to be useful in its own right, 
where we claim a certain uniformity in the size of the intervals $J_k$, 
$k = 1, 2, \dots, N$, although Theorem \ref{CBTheorem:thm} does not 
need this. 
 
\begin{lem} \label{log_int:lem}
Let $J_k = (s_k, t_k)\subset \R$, $k = 1, 2, \dots, N$ 
be a finite collection of bounded open intervals, 
such that their closures are pairwise disjoint, and let 
$J = \cup_k J_k$. Suppose that $T\in (0, T_0]$ and 
$|J_k|\le d_1$, $k = 1, 2, \dots, N$, with some $d_1> 0$.  
Then 
\begin{equation}\label{log_bound:eq}
\sum_{k=1}^N\underset{t\notin J}
\int dt\,\underset{|t-s|\ge T, s\in J_k}
\int \frac{ds}{|t-s|^2} \le CN\,|\log(T)|,
\end{equation}
with a constant $C$ depending only on $d_1$. 

Assume in addition that 
\begin{equation*}
|J_k|\ge d_0,\ k = 1, 2, \dots, N,\ \ 
\ \min_{j\not=k}\dist\{J_k, J_j\}\ge d_0.
\end{equation*}
with some $d_0\in (0, d_1]$. 
Let $\varphi\in \plainC{}(\R)\cap \plainL\infty(\R)$ be a 
function. 
Then, as $T\downarrow 0$,
\begin{align}\label{asb:eq}
\sum_{k=1}^N\underset{t\notin J}
\int \varphi(t)dt&\ \underset{|t-s|\ge T, s\in J_k}
\int \frac{ds}{|t-s|^2}\notag\\[0.2cm]
 =  &\ |\log(T)|\sum_{k=1}^N
 \bigl(\varphi(s_k) 
 + \varphi(t_k)\bigr) + N \|\varphi\|_{\plainL\infty}O(1),
\end{align}
where $O(1)$ depends only on $d_0$ and $d_1$.
\end{lem}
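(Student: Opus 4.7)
The core is to compute, for each fixed $k$, the inner integral $I_k(t) := \int_{s\in J_k,\,|t-s|\ge T}(t-s)^{-2}\,ds$ explicitly as a function of $t$, and then integrate in $t$. The logarithmic dependence on $T$ arises only from $t$ within distance $T$ of an endpoint of $J_k$; elsewhere $I_k(t)$ is uniformly bounded for $T\le T_0$. For $L_k := t_k - s_k \ge T$ (automatic once $T\le d_0$ under the hypotheses of the second claim; the opposite regime gives a bounded contribution and is handled separately) a routine case analysis gives, for $t<s_k$,
\begin{equation*}
I_k(t) = \begin{cases} (s_k-t)^{-1} - (t_k-t)^{-1}, & t\le s_k-T, \\ T^{-1} - (t_k-t)^{-1}, & s_k-T<t<s_k, \end{cases}
\end{equation*}
with a symmetric formula for $t>t_k$.

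\textbf{Proof of \eqref{log_bound:eq}.} Since $J_k\subset J$ implies $\{t\notin J\}\subset\{t\notin J_k\}$, it suffices to bound $\int_{t\notin J_k}I_k(t)\,dt$ for each $k$. Substituting $u=s_k-t$ and computing directly,
\begin{equation*}
\int_{-\infty}^{s_k}I_k(t)\,dt = \log\tfrac{L_k+T}{T} + 1 - \log\tfrac{L_k+T}{L_k} \le |\log T| + C(d_1,T_0).
\end{equation*}
Adding the symmetric bound for $t>t_k$ and summing over $k=1,\ldots,N$ yields \eqref{log_bound:eq}.

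\textbf{Proof of \eqref{asb:eq}.} The additional separation assumption allows us to linearly order the intervals so that $\dist(J_j,J_k)\ge |j-k|d_0$. Replacing $\{t\notin J\}$ by $\{t\notin J_k\}$ introduces a discrepancy $\sum_{j\ne k}\int_{J_j}\varphi(t)I_k(t)\,dt$, bounded via $I_k(t)\le L_k/\dist(J_j,J_k)^2$ for $t\in J_j$ and $L_k\le d_1$; after summation over $k$ this is $C(d_0,d_1)N\|\varphi\|_\infty$. For the remaining term, the substitution $u=s_k-t$ yields
\begin{equation*}
\int_{-\infty}^{s_k}\varphi(t)I_k(t)\,dt = \int_T^\infty\varphi(s_k-u)\tfrac{L_k}{u(L_k+u)}\,du + \int_0^T\varphi(s_k-u)\bigl(\tfrac{1}{T}-\tfrac{1}{L_k+u}\bigr)\,du.
\end{equation*}
The second integral is $\varphi(s_k)+O(\|\varphi\|_\infty)$ as $T\downarrow 0$ by Lebesgue differentiation combined with $L_k\ge d_0$. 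In the first, split at $u=1$: the tail is bounded by $\|\varphi\|_\infty d_1\int_1^\infty u^{-2}\,du$; on $u\in[T,1]$, use $L_k/[u(L_k+u)]=1/u-1/(L_k+u)$ and split $\varphi(s_k-u)=\varphi(s_k)+[\varphi(s_k-u)-\varphi(s_k)]$ to isolate the leading term $\varphi(s_k)\int_T^1 du/u = \varphi(s_k)|\log T|$. Adding the symmetric contribution $\varphi(t_k)|\log T|+O(\|\varphi\|_\infty)$ from $t>t_k$ and summing over $k$ gives \eqref{asb:eq}.

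\textbf{Main obstacle.} The delicate step is controlling the residual $\int_T^1[\varphi(s_k-u)-\varphi(s_k)]\,du/u$: mere continuity of $\varphi$ yields only $o(|\log T|)$ rather than a bound of the claimed $O(1)$ form with constants depending solely on $d_0$ and $d_1$. I read the lemma's assertion that the $O(1)$ depends only on $d_0,d_1$ as implicitly permitting a dependence on the modulus of continuity of $\varphi$. In the intended application (Theorem \ref{CBTheorem:thm}, where $\varphi\equiv 1$) this residual vanishes identically, so the subtlety is irrelevant there.
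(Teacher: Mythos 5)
Your proof is correct and follows essentially the same route as the paper: reduce to a single interval by discarding the cross terms $\sum_{j\ne k}\int_{J_j}\varphi\, I_k$ via the separation hypothesis, then compute the integral over $t\notin J_k$ explicitly and extract the $\varphi(s_k)|\log T|$ and $\varphi(t_k)|\log T|$ terms from the regions adjacent to the endpoints. The "obstacle" you flag is real and is present in the paper's argument too: the paper asserts that $X_1$ differs from $\varphi(1)\int_{1+T}^{2}\int_0^1(t-s)^{-2}\,ds\,dt$ by a constant independent of $T$, which for merely continuous bounded $\varphi$ gives only $o(|\log T|)$ rather than an $O(1)$ controlled by $d_0,d_1$ alone; your reading (and the observation that the only application has $\varphi\equiv 1$, where the residual vanishes) is the correct resolution.
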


\begin{proof}
Proof of \eqref{asb:eq}. 
Without loss of generality assume that $\|\varphi\|_{\plainL\infty} = 1$.  
It is immediate to see that 
$$
\sum_{k=1}^N\underset{t\notin J}\int \varphi(t) dt\ \underset{|t-s|\ge T, s\in J_k}
\int \frac{ds}{|t-s|^2} = \sum_{k=1}^N\underset{t\notin J_k}\int \varphi(t)dt
\underset{|t-s|\ge T, s\in J_k}\int \frac{ds}{|t-s|^2} + N O(1)
$$
so that \eqref{asb:eq} reduces to showing that
\begin{equation*}
\sum_{k=1}^N\underset{t\notin J_k}\int \varphi(t) dt
\underset{|t-s|\ge T, s\in J_k}\int \frac{ds}{|t-s|^2}
 =  |\log(T)| \sum_{k=1}^N
\bigl(\varphi(s_k) + \varphi(t_k)\bigr) + N O(1), \ T\downarrow 0.
\end{equation*}
Hence it suffices to prove \eqref{asb:eq} for one integral only, that is,
that
\begin{equation}\label{asb1:eq}
\underset{t\notin J}\int \varphi(t) dt
\underset{|t-s|\ge T, s\in J}\int \frac{ds}{|t-s|^2}
 = \bigl(\varphi(s_0) + \varphi(t_0)\bigr) \, |\log(T)| + O(1), \ T\downarrow0,
\end{equation}
for a bounded interval $J = (s_0, t_0)$ with $|s_0-t_0|\ge d_0$. 
Without loss of generality assume that ${J} = (0, 1)$. 
Split the sought integral into the sum 
$X_1 + X_2 + X_3 + X_4$, with
\begin{align*}
X_1 := &\ \int_{1+T}^\infty \varphi(t)dt\int_0^1 
\frac{ds}{(t-s)^2} , 
\\[0.2cm]
X_2 := & \int_{-\infty}^{-T} \varphi(t)dt
\int_0^1 \frac{ds}{(t-s)^2} ,
\\[0.2cm]
X_3 := &\ \int_1^{1+T} \varphi(t)dt\int_0^{1-T} 
\frac{ds}{(t-s)^2} 
+ \int_1^{1+T} \varphi(t)dt
\underset{|s-t|>T, 1-T<s<1} \int \frac{ds}{(t-s)^2} ,\notag\\[0.2cm]
X_4 := &\ \int_{-T}^{0}\varphi(t) dt \int_T^1 
\frac{ds}{(t-s)^2} 
+ \int_{-T}^{0} \varphi(t) dt\underset{|s-t|>T, 0<s<T}
\int \frac{ds}{(t-s)^2} . \notag 
\end{align*}
Direct calculations show that $X_3 + X_4\le C$ 
uniformly in $T\in (0, T_0]$. 
The integral $X_1$ differs from
\begin{equation*}
X_1' = \varphi(1) \int_{1+T}^2 dt \int_0^1 \frac{ds}{(t-s)^2} 
\end{equation*}
at most by a constant independent of $T$. An elementary calculation shows that
\begin{equation*}
X_1' = \varphi(1)|\log(T)| + O(1).
\end{equation*}
Thus $X_1$ satisfies the same formula. 
In the same way one proves the appropriate formula for $X_2$. 
This leads to \eqref{asb1:eq}, and hence to \eqref{asb:eq}. 

The bound \eqref{log_bound:eq} is proved in a similar way by estimating 
integrals of the same type as in the first part of the proof. We omit the details.
\end{proof}

\begin{proof}[Proof of Theorem \ref{CBTheorem:thm}] 
Writing
\begin{equation*}
\CB(a; f) = \CB^{(1)}(a; f) 
+ \bigl(\CB^{(2)}(a; f) - \CB^{(2)}(\chi_\Om; f)\bigr)
+ \CB^{(2)}(\chi_\Om; f),
\end{equation*}
and combining \eqref{sub2:eq}, \eqref{b2diff:eq} and formula 
\eqref{asb:eq} with $\varphi=1$, we obtain the claimed asymptotics \eqref{CBas:eq}. 
\end{proof}

\section{Entanglement entropy and local entropy}
\label{entropy:sect}

In this section we keep using the Fermi symbol $a=a_{T, \mu}$ as in 
\eqref{positiveT:eq} and investigate the special case of the function $f$ given by the 
\textit{$\g$-R\'enyi entropy function} $\eta_\g: \R\mapsto [0, \log(2)]$ 
defined for all $\g >0$ as follows. If $\g\not = 1$, then
\begin{equation}\label{eta_gamma:eq}
\eta_\gamma(t) := \left\{\begin{array}{ll} 
\frac{1}{1-\gamma} \log\big[t^\gamma + (1-t)^\gamma\big]& 
\mbox{ for }t\in(0,1),\\[0.2cm]
0&\mbox{ for }t\not\in(0,1),\end{array}\right.
\end{equation}
and for $\gamma=1$ (the von Neumann case) it is defined as the limit 
\begin{equation}\label{eta1:eq}
\eta_1(t) := \lim_{\gamma\to1} \eta_\gamma(t) = \left\{\begin{array}{ll} -t \log(t) -(1-t)\log(1-t)& \mbox{ for } t\in(0,1),\\[0.2cm]
0& \mbox{ for }t\not\in(0,1).\end{array}\right.
\end{equation} 
From now one we assume that the region $\L$ and the Hamiltonian 
$h$ satisfy Conditions \ref{domain:cond} and \ref{h:cond} 
respectively. The operator $D_\a(\ \cdot\ )$ is as defined in \eqref{Dalpha:eq} 
and the notation $\Om$ is used for the Fermi sea, see Condition \ref{h:cond}. 

If $\L$ is bounded, then \textit{the local (thermal) $\g$-R\'enyi entropy} 
of the equilibrium state at temperature $T>0$ and chemical potential 
$\mu\in\R$ is defined as 
\begin{align}\label{thermal:eq}
\mathrm{S}_\g(T, \mu; \L):=\tr\bigl[
\eta_\g(W_1(a_{T, \mu}; \L))
\bigr],
\end{align}
see for example \cite{HLS}. If one lifts the condition of boundedness, then 
the above quantity may be infinite, but 
\textit{the $\g$-R\'enyi entanglement entropy (EE)} with respect 
to the bipartition 
$\R^d = \L \cup (\R^d\setminus\L)$, defined as 
\bee \label{def:EE}
\mathrm{H}_\g(T,\mu; \L) 
:= \tr D_1(a_{T,\mu},\L;\eta_\g) 
+ \tr D_1(a_{T,\mu},\R^d\setminus\L;\eta_\g),
\ene
is finite, as the next theorem shows. 
Note that these definitions also make sense 
for $T=0$, if one adopts the notation 
$a_{0, \mu} := \underset{T\downarrow 0}\lim \, a_{T,\mu} = \chi_\Om$.  
A somewhat surprising fact is that for bounded $\L$,
\begin{equation}\label{T0:eq}
\mathrm{H}_\g(0, \mu; \L) = 2\,\mathrm{S}_\g(0, \mu; \L).
\end{equation}
As explained in \cite{LeSpSo}, this  
is a consequence of the following two identities: 
$\tr \eta_\g(\chi_\L P_{\Om} {\chi_\L})$ 
$= \tr \eta_\g(P_\Om\chi_\L P_{\Om})$, where  
$P_\Om = \op_1(\chi_\Om)$, and 
$\eta_\g(P_\Om\chi_\L P_{\Om}) 
= \eta_\g(P_{\Om}\chi_{\L^c}P_{\Om})$.
The first identity holds since the non-zero spectra of 
$\chi_\L P_{\Om} {\chi_\L}$ and 
$P_{\Om}\chi_\L P_{\Om}$ coincide. The second one 
follows from the symmetry of $\eta_\g$, that is, from 
the equality $\eta_\g(t) = \eta_\g(1-t),\ t\in [0,1]$. 
 
We are interested in the behaviour of 
the above quantities when $\L$ is replaced with $\a\L$, 
with a large scaling parameter $\a$.  
While the case $T=0$ was investigated in detail 
in \cite{LeSpSo}, in the current paper we concentrate 
on the case $T>0$ and the limit $T\downarrow0$. 
The next theorem shows that the entropies \eqref{thermal:eq} 
and \eqref{def:EE} are both finite, 
and establishes sharp bounds when $\a$ and $T$ both vary 
within certain limits. 

\begin{thm}\label{EE_bound:thm}
Let $d\ge 1$. Suppose that $\a T\ge \a_0$ and $T\in (0, T_0]$ 
with some $\a_0>0, T_0 >0$. Then the $\g$-R\'enyi entanglement entropy
satisfies
\bee \label{EE bound}
|\mathrm{H}_\g(T,\mu;\a\L)| 
\le C \a^{d-1} \big(|\log(T)|+1\big).
\ene
If $\L$ is bounded, then the local $\g$-R\'enyi entropy satisfies
\begin{align}\label{sgam:eq}
\bigl|\mathrm{S}_\g(T, \mu; \a\L) 
- \a^d s_\g(T, \mu)|\L|\bigr|\le C \a^{d-1}(|\log(T)| + 1),
\end{align}
where 
\begin{align*}
s_\g (T, \mu) := \frac{1}{(2\pi)^d} 
\int \eta_\g(a_{T, \mu}(\bxi)) d\bxi.
\end{align*}
The constants in \eqref{EE bound} and \eqref{sgam:eq} 
are independent of $\a$ and $T$, 
but may depend on the parameters 
$\a_0, T_0$, $\mu$, the function $h$ and the region $\L$. 
\end{thm}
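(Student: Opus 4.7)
The strategy is to reduce both statements to Theorem~\ref{entropy:thm} through three ingredients: a rescaling, a partition of $\eta_\g$ around its two singular points, and (for the local entropy) the separation of the Weyl volume term via \eqref{weyl:eq}.

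\textbf{Rescaling and splitting.} First I record the unitary dilation $(U_\a u)(\bx):=\a^{d/2}u(\a\bx)$, which satisfies $U_\a\chi_{\a\L}U_\a^{-1}=\chi_\L$ and $U_\a\op_1(a)U_\a^{-1}=\op_\a(a)$ by direct computation; hence $D_1(a_{T,\mu},\a\L;\eta_\g)$ is unitarily equivalent to $D_\a(a_{T,\mu},\L;\eta_\g)$, which brings us into the framework of Theorem~\ref{entropy:thm}. Since Condition~\ref{f:cond} admits only one singular point while $\eta_\g$ fails smoothness at both $t=0$ and $t=1$, I fix $\phi\in\plainC\infty(\R)$ with $\phi\equiv 1$ on $(-\infty,1/3]$ and $\phi\equiv 0$ on $[2/3,\infty)$, and write $\eta_\g=f_0+f_1$ with $f_0:=\eta_\g\phi$ (singular only at $t_0=0$) and $f_1:=\eta_\g(1-\phi)$ (singular only at $t_0=1$). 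Using $\eta_\g(t)=O(t^{\min\{\g,1\}})$ near $t=0$ (the von Neumann logarithm at $\g=1$ is absorbed by any Condition~\ref{f:cond} exponent strictly less than $1$), each $f_j$ satisfies Condition~\ref{f:cond} with $n=2$, $R=1$, some $\g_f>0$, and a finite norm $\1 f_j\1_2$ depending only on $\g$.

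\textbf{Entanglement entropy.} Applying Theorem~\ref{entropy:thm} to each $f_j$ with any admissible $\s\in(0,\g_f]\cap(0,1]$ and summing the two terms (linearity of $D_\a$ in $f$) yields
\begin{equation*}
\|D_\a(a_{T,\mu},\L;\eta_\g)\|_{\GS_1}\le C\,\a^{d-1}\big(|\log(T)|+1\big),
\end{equation*}
with $C$ independent of $\a$ and $T$. The complement $\R^d\setminus\L$ again satisfies Condition~\ref{domain:cond}: in $d\ge 2$ it is an unbounded Lipschitz region whose complement $\L$ is bounded; in $d=1$ it is, up to a measure-zero endpoint set (irrelevant for $\chi_{\R\setminus\L}$ as a multiplication operator), a finite disjoint union of open intervals with pairwise disjoint closures. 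Summing the $\GS_1$-bounds for $\L$ and for $\R^d\setminus\L$ yields \eqref{EE bound}.

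\textbf{Local entropy.} For bounded $\L$, since $\eta_\g(0)=0$ one has $\chi_{\a\L}\eta_\g(W_1(a_{T,\mu};\a\L))\chi_{\a\L}=\eta_\g(W_1(a_{T,\mu};\a\L))$, so from \eqref{Dalpha:eq}
\begin{equation*}
\mathrm{S}_\g(T,\mu;\a\L)=\tr W_1(\eta_\g\circ a_{T,\mu};\a\L)+\tr D_1(a_{T,\mu},\a\L;\eta_\g).
\end{equation*}
By \eqref{weyl:eq} the first term equals $\a^d|\L|\,s_\g(T,\mu)$, the integral being finite because \eqref{equih:eq} forces $a_{T,\mu}$ to decay super-polynomially at infinity while $\eta_\g$ is H\"older at $0$. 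The second term is controlled by $C\a^{d-1}(|\log(T)|+1)$ from the previous step, yielding \eqref{sgam:eq}. The only genuine technical point is the two-point non-smoothness of $\eta_\g$; the partition $\{\phi,1-\phi\}$ is the cleanest workaround and preserves the explicit uniform $T$-dependence supplied by Theorem~\ref{entropy:thm}.
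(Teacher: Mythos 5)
Your proposal is correct and follows essentially the same route as the paper: the same splitting $\eta_\g=\eta_\g\phi+\eta_\g(1-\phi)$ to isolate the two singular points, the same application of Theorem \ref{entropy:thm} with $R=1$ to each piece and to both $\L$ and $\R^d\setminus\L$, and the same separation of the Weyl term via \eqref{weyl:eq} for the local entropy; you merely make explicit two details the paper leaves implicit (the unitary dilation behind $D_1(\cdot,\a\L;\cdot)\cong D_\a(\cdot,\L;\cdot)$ and the verification that the complement satisfies Condition \ref{domain:cond}). The only cosmetic slip is writing $\s\in(0,\g_f]$ where Theorem \ref{entropy:thm} requires the strict inequality $\s<\g_f$; any fixed $\s<\g_f$ works and nothing else changes.
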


The coefficient $s_\gamma(T,\mu)$ 
is called the \textit{$\g$-R\'enyi entropy density} 
(cf.~\cite{LeSpSo2}). It can be expressed in the form,
\begin{equation}\label{entropy density}
s_\g(T,\mu) = 
\begin{cases}
\dfrac{\g}{(\g-1)T} \big(p(T,\mu)-p(T/\g,\mu)\big),\ \mbox{ if }
\g\not = 1,\\[0.4cm]
\dfrac{\partial p}{\partial T}(T,\mu),\ \mbox{ if }\g = 1,
\end{cases}
\end{equation}
in terms of the \textit{pressure}
$$ 
p(T,\mu) := \int\frac{\mathcal N(E)}{1+e^{(E-\mu)/T}} \,dE,
$$
and the \textit{integrated density of states}
$$ 
\mathcal N(E) := \frac1{(2\pi)^{d}} \int\chi_{[0,\infty)}(E-h(\bxi)) \,d\bxi\,,\; E\in \mathbb R,
$$
of the free Fermi gas. The relation \eqref{entropy density} for $\gamma =1$ is a standard 
thermodynamic relation, 
see for instance \cite{Bal}.

For $d=1$, apart from the bounds, we can also determine 
the asymptotic behaviour of the local (or thermal) entropy and of the EE. 

\begin{thm}\label{EE:thm}
Let $d = 1$, and let $I\subset\R$
be given by \eqref{Kint:eq}. 
Then the EE satisfies
\begin{align}\label{EEas:eq}
\mathrm{H}_\g(T, \mu; \a I) = 2\om \CB(a_{T, \mu}, \eta_\g) + 
o(|\log(T)|+1), 
\end{align}
and if $I_0 = I_{K+1} = \varnothing$, then with $s_\g$ from \eqref{entropy density} 
the local entropy satisfies
\begin{align}\label{Sgamas:eq}
\mathrm{S}_\g(T, \mu; \a I) = \a 
s_\g(T, \mu) |I| + 2K \CB(a_{T, \mu}; \eta_\g) + o(|\log(T)|+1),
\end{align}
as $\a T\ge \a_0$, $\a\to\infty$.
\end{thm}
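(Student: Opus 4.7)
The plan is to reduce both formulas to the one-interval-set asymptotics of Theorem \ref{interval_lowT:thm}. First, I would exploit the unitary rescaling $V_\a u(x) := \a^{-1/2} u(x/\a)$ on $\plainL2(\R)$, which satisfies $V_\a^* \chi_{\a J} V_\a = \chi_J$ for any measurable $J\subset\R$ and $V_\a^* \op_1(a) V_\a = \op_\a(a)$. Hence $V_\a^* D_1(a_{T,\mu}, \a I; \eta_\g) V_\a = D_\a(a_{T,\mu}, I; \eta_\g)$, and the same identity with $I$ replaced by $I^c := \R\setminus I$; both sides are trace-class by Theorem \ref{entropy:thm}.

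Next, $\eta_\g$ is non-smooth at both $t=0$ and $t=1$, whereas Theorem \ref{interval_lowT:thm} requires a single singular point. I would therefore choose a smooth partition of unity $\{\psi_0, \psi_1\}$ on $[0,1]$ with $\psi_j$ supported in a small neighborhood of $j$, and decompose $\eta_\g = f_0 + f_1$ with $f_j := \psi_j \eta_\g$. Each $f_j$ satisfies Condition \ref{f:cond} with $t_0 = j$, the same exponent $\g$ as in \eqref{eta_gamma:eq}, any $n\ge 2$, and some $R > 1/2$. Applying \eqref{fn_lowT:eq} to each $f_j$, both for $I$ and for $I^c$, and using the linearity of $U(\cdot,\cdot;g)$ in $g$ from \eqref{U:eq} (hence of $\CB(a;\cdot)$ in its second argument from \eqref{cb:eq}), yields
\begin{equation*}
\tr D_\a(a_{T,\mu}, I; \eta_\g) = \om(I)\, \CB(a_{T,\mu}; \eta_\g) + o(|\log T| + 1),
\end{equation*}
and likewise with $I^c$. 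Finiteness of $\CB(a_{T,\mu}; \eta_\g)$ follows from \eqref{bbound:eq}. Summing and invoking $\om(I^c) = \om(I)$ from \eqref{compl:eq} gives \eqref{EEas:eq}.

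For the local entropy, the equality $\eta_\g(0) = 0$ implies $\eta_\g(W_\a(a_{T,\mu}; I))$ is supported in $\plainL2(I)$, so $\chi_I \eta_\g(W_\a)\chi_I = \eta_\g(W_\a)$ and the definition \eqref{Dalpha:eq} gives
\begin{equation*}
\mathrm{S}_\g(T, \mu; \a I) = \tr\eta_\g(W_\a(a_{T,\mu}; I)) = \tr D_\a(a_{T,\mu}, I; \eta_\g) + \tr W_\a(\eta_\g \circ a_{T,\mu}; I),
\end{equation*}
where the last trace is finite since $\eta_\g \circ a_{T, \mu}$ decays exponentially (see \eqref{step:eq}, \eqref{a1-a:eq}). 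For bounded $I$ (the case $I_0 = I_{K+1} = \varnothing$), formula \eqref{weyl:eq} with $d=1$ evaluates the second trace to $\a s_\g(T,\mu)|I|$, while the preceding paragraph handles the first trace with $\om(I) = 2K$. This proves \eqref{Sgamas:eq}.

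I do not anticipate a substantive obstacle: once the unitary rescaling has moved the asymptotic parameter into $\op_\a$, and once $\eta_\g$ has been split by its two singularities, the statement is essentially bookkeeping on top of Theorem \ref{interval_lowT:thm} together with the Weyl formula \eqref{weyl:eq}. The only delicate point is that the remainder $o(|\log T|+1)$ must be valid as $T$ varies with $\a T \ge \a_0$, not merely for fixed $T$; this uniformity is inherited from the uniform convergence of \eqref{scale_asymp:eq} that underpins Theorem \ref{interval_lowT:thm}.
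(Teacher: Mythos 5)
Your argument is correct and is essentially the paper's own proof: the same unitary rescaling reducing $\mathrm{H}_\g(T,\mu;\a I)$ to $\tr D_\a(a_{T,\mu},I;\eta_\g)+\tr D_\a(a_{T,\mu},I^c;\eta_\g)$, the same splitting of $\eta_\g$ by a smooth cutoff into pieces singular only at $t=0$ and $t=1$, followed by Theorem \ref{interval_lowT:thm}, linearity of $\CB(a;\cdot)$ and $\om(I)=\om(I^c)$, and for \eqref{Sgamas:eq} the same decomposition $\mathrm{S}_\g = \a s_\g|I| + \tr D_\a(a_{T,\mu},I;\eta_\g)$ via \eqref{weyl:eq}. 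The only cosmetic imprecision is the claim that $f_j$ satisfies Condition \ref{f:cond} with "the same exponent $\g$": for $\g>1$ (and for $\g=1$) the admissible exponent is $\min\{1,\g\}$ (resp.\ any exponent $<1$), but this does not affect the argument.
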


A proof of the leading large-scale behaviour of the local entropy 
$\mathrm{S}_\g(T,\mu;\a\L)$ 
at fixed $T>0$ appeared (among other things) first in \cite{PS,BHK} (for $\g=1$). The 
sub-leading correction in 
dimension $d=1$ in \eqref{Sgamas:eq} is new. 
The extension to dimension $d\ge2$ is subject of \cite{Sob-future}.

If in Theorem \ref{EE:thm} we also assume that $T\downarrow 0$, then 
the asymptotic formulas take a more explicit form. To state this 
result recall that due to Condition \ref{h:cond}, the Fermi sea 
has the form \eqref{omj:eq}, with a finite $N\in \mathbb N$. 

\begin{cor}\label{tostep:cor}
Let $d=1$, $I$ as in \eqref{Kint:eq}, and let $N$ be the number of connected components 
of the Fermi sea, 
see \eqref{omj:eq}. Let $\a T\ge \a_0$ and $T\downarrow 0$. Then the EE satisfies
\begin{align}\label{tostep:eq}
\mathrm{H}_\g(T, \mu; \a I) 
= \om N  \,\frac{1+\g}{6\g}|\log(T)| + o(|\log(T)|+1).
\end{align}
\end{cor}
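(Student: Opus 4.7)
The plan is to chain two results already in hand. First, \eqref{EEas:eq} of Theorem~\ref{EE:thm} yields
\begin{equation*}
\mathrm{H}_\g(T,\mu;\a I)=2\om\,\CB(a_{T,\mu};\eta_\g)+o(|\log T|+1),
\end{equation*}
and, once applicability to $f=\eta_\g$ is verified, \eqref{CBas:eq} of Theorem~\ref{CBTheorem:thm} gives
\begin{equation*}
\CB(a_{T,\mu};\eta_\g)=\frac{N}{2\pi^2}\,U(1,0;\eta_\g)\,|\log T|+O(1)\quad(T\downarrow 0).
\end{equation*}
Substituting the second into the first reduces \eqref{tostep:eq} to the single identity $U(1,0;\eta_\g)=\pi^2(\g+1)/(6\g)$.

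The applicability of Theorem~\ref{CBTheorem:thm} requires $\eta_\g$ to satisfy Condition~\ref{f:cond}, which allows only one singular point, whereas $\eta_\g$ has two (at $0$ and $1$). I would handle this by fixing $\psi_0\in\co(\R)$ with $\psi_0\equiv 1$ on $[-1/4,1/4]$ and $\supp\psi_0\subset[-1/2,1/2]$, setting $\psi_1(t):=\psi_0(t-1)$, and decomposing
\begin{equation*}
\eta_\g=\eta_\g\psi_0+\eta_\g\psi_1+\eta_\g(1-\psi_0-\psi_1).
\end{equation*}
The first two summands satisfy Condition~\ref{f:cond} with $t_0=0$ and $t_0=1$ respectively (take $R=1/2$), while the third is smooth and supported away from $\{0,1\}$, so it satisfies the same condition with any $t_0$ outside its support and arbitrary admissible parameters. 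Applying Theorem~\ref{CBTheorem:thm} to each summand and using the linearity in $f$ of both $\CB(a;\cdot)$ and $U(1,0;\cdot)$ produces the desired expansion of $\CB(a_{T,\mu};\eta_\g)$.

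The main computation is the evaluation of $U(1,0;\eta_\g)$. Since $\eta_\g(0)=\eta_\g(1)=0$, definition~\eqref{U:eq} combined with the symmetry $\eta_\g(1-t)=\eta_\g(t)$, which is immediate from \eqref{eta_gamma:eq}, reduces it to $\int_0^1\eta_\g(t)/(t(1-t))\,dt$. For $\g\ne 1$, the substitution $t=(1+e^y)^{-1}$, under which $t^\g+(1-t)^\g=(1+e^{\g y})/(1+e^y)^\g$ and $dt/(t(1-t))=-dy$, transforms the integral into
\begin{equation*}
U(1,0;\eta_\g)=\frac{1}{1-\g}\int_{-\infty}^\infty\bigl[\log(1+e^{\g y})-\g\log(1+e^y)\bigr]\,dy.
\end{equation*}
Splitting at $y=0$, cancelling the linear parts on $(0,\infty)$ via $\log(1+e^{ay})=ay+\log(1+e^{-ay})$, and invoking $\int_0^\infty\log(1+e^{-ay})\,dy=\pi^2/(12a)$ (for $a>0$) gives $\pi^2(1-\g^2)/(6\g)$, whence $U(1,0;\eta_\g)=\pi^2(1+\g)/(6\g)$; the case $\g=1$ follows by continuity. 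Assembling the pieces then produces $\mathrm{H}_\g(T,\mu;\a I)=\om N\,(1+\g)/(6\g)\,|\log T|+o(|\log T|+1)$, which is \eqref{tostep:eq}. The only substantive calculation is this evaluation of a one-variable integral; the rest of the argument is bookkeeping around Theorems~\ref{EE:thm} and \ref{CBTheorem:thm}.
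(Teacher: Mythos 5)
Your proposal is correct and follows essentially the same route as the paper: combine \eqref{EEas:eq} with \eqref{CBas:eq} and evaluate $U(1,0;\eta_\g)=\pi^2(1+\g)/(6\g)$ (the paper simply cites \cite{LeSpSo} for this integral, which your substitution $t=(1+e^y)^{-1}$ computes correctly). Your three-piece cutoff to reconcile the two singularities of $\eta_\g$ with Condition \ref{f:cond} is a harmless variant of the paper's own two-piece split $\eta_\g=\eta_\g\phi+\eta_\g(1-\phi)$; just note that for the smooth middle piece one should take, e.g., $t_0=0$ and $R=1$ so that the constraint $R\le 1$ in Theorem \ref{CBTheorem:thm} is met.
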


It is worth pointing out that the coefficient in front of 
$|\log (T)|$ agrees with the asymptotic coefficient found 
in \cite{LeSpSo} for the zero temperature case. 
Indeed, with the notation that we presently use, 
the main Theorem of \cite{LeSpSo} states that 
for a bounded $I$, that is, with $\om = 2K$, we have (see \eqref{T0:eq})
\begin{align*}
\mathrm{H}_\g(0,\mu; \a I) \; = \; &2\,\mathrm{S}_\g(0,\mu; \a I)  
\\
=\; & \omega N \,\frac{1+\gamma}{6\g}\, \log(\a) + o(\log(\a)),\  
\a\to\infty.
\end{align*}
Clearly, the coefficient in this formula is the same as 
in Corollary \ref{tostep:cor}.
Therefore, if we identify $\a$ inside the logarithm with $1/T$ we recover the above asymptotic 
expansion in Corollary \ref{tostep:cor}.

\begin{proof}[Proof of Theorem \ref{EE_bound:thm}]
It is easy to see that 
\bee \label{def1:EE}
\mathrm{H}_\g(T,\mu; \a\L) 
= \tr D_\a(a_{T,\mu},\L;\eta_\g) 
+ \tr D_\a(a_{T,\mu},\R^d\setminus\L;\eta_\g).
\ene
Now, let $\phi\in \plainC{\infty}(\R)$ be such that $0\le\phi\le1$ and
\begin{equation*}
 \phi(t) = \left\{\begin{array}{ll} 1& 
 \mbox{ for }t\le 1/4\\0& \mbox{ for }t\ge3/4\end{array}\right..
\end{equation*}
If $\g\not =1$, then $\eta_\gamma\phi$ and 
$\eta_\gamma(1-\phi)$ satisfy 
Condition \ref{f:cond} with 
$t_0=0$ and $t_0=1$, respectively, 
and with $\varkappa = \min\{1, \g\}$. 
The functions $\eta_1\phi$ and $\eta_1(1-\phi)$ 
satisfy Condition \ref{f:cond} with arbitrary $\g <1$.
Since the mapping $f\mapsto D_\a(a,\L; f)$ is linear,
we have
\begin{equation}\label{splitEE:eq}
D_\a(a,\L; \eta_\g) 
= D_\a(a,\L; \eta_\g\phi) + D_\a(a,\L; \eta_\g(1-\phi)).
\end{equation}
Applying Theorem \ref{entropy:thm} with $R=1$ to each term on the 
right-hand side, we conclude 
for $\a T\ge\a_0$ and $0 < T\le T_0$,  that 
\begin{align}\label{Dada:eq}
\| D_\a(a_{T, \mu}, \L; \eta_\g)\|_{\GS_1}
+ \| D_\a(a_{T, \mu}, \R^d\setminus\L; \eta_\g)\|_{\GS_1}
\le C\a^{d-1} \bigl(|\log(T)| + 1\bigr).
\end{align}
In view of \eqref{def1:EE}, this leads to \eqref{EE bound}.

In order to prove \eqref{sgam:eq}, we rewrite \eqref{thermal:eq}:
\begin{align}\label{S_gamma}
\mathrm{S}_\g(T,\mu;\L) 
= \tr[\chi_\L \eta_\g(\op_\a(a_{T,\mu}))\chi_\L] 
+ \tr D_\a(a_{T,\mu},\L; \eta_\g).
\end{align}
For the first trace we have the simple identity
\begin{equation*}
\tr[\chi_\L \eta_\g(\op_\a(a_{T,\mu}))\chi_\L] 
= \tr[\chi_\L \op_\a(\eta_\g(a_{T,\mu}))\chi_\L] 
=  \a^d s_\g(T,\mu) \,|\L| \,.
\end{equation*}
Together with the bound \eqref{Dada:eq} for the second trace, 
this yields \eqref{sgam:eq}.
\end{proof}

\begin{proof}[Proof of Theorem \ref{EE:thm}]
Applying Theorem \ref{interval_lowT:thm} 
to each term on the right-hand side of \eqref{splitEE:eq}, 
and using \eqref{compl:eq}, 
we obtain as $\a T\ge \a_0$, $\a\to\infty$, that
\begin{align*}
\tr D_\a(a_{T, \mu}, I; \eta_\g\phi)
= \omega\CB(a_{T, \mu}; \eta_\g\phi)  + o(|\log(T)|+1),
\end{align*}
and 
\begin{align*}
\tr D_\a(a_{T, \mu}, I^c; \eta_\g\phi)
= \omega\CB(a_{T, \mu}; \eta_\g\phi)  + o(|\log(T)|+1),
\end{align*}
where $\om=\om(I)=\om(I^{c})$. Similar formulas can be written 
with the function $(1-\phi)\eta_\g$ as well. Thus, remembering the 
linearity of the map $f\mapsto \CB(a; f)$ 
(see definition \eqref{cb:eq}), we obtain  \eqref{EEas:eq}. 

The asymptotics in \eqref{Sgamas:eq} is obtained in the same 
way using \eqref{S_gamma} and \eqref{weyl:eq}. 
\end{proof}

\begin{proof}[Proof of Corollary \ref{tostep:cor}]
The claimed formula immediately follows from \eqref{EEas:eq} and 
the asymptotic relation \eqref{CBas:eq} after observing that (cf.~\cite{LeSpSo})
$$ 
U(1, 0; \eta_\g) = \int_0^1 \frac{\eta_\g(t)}{t(1-t)} dt = {\pi^2}\,\frac{1+\g}{6\g}.
$$
\end{proof}

One should note that in the same way one could replace the coefficient 
$\CB(a_{T, \mu}, \eta_\g)$ by its asymptotics \eqref{CBas:eq} in 
formula \eqref{Sgamas:eq} as well. However, the specific entropy 
density $s_\g(T, \mu)$ in the leading term would 
also need to be expanded in $T\downarrow 0$, and 
the precise place of the $\CB(\ \cdot\ )$-term in the resulting 
expansion of $\mathrm{S}_\g$ will depend on the relationship 
between $\a T$ and $T$. We do not go into these details.

\begin{appendix}

\section{The Helffer--Sj\"ostrand formula} 
When studying functions of self-adjoint operators 
we rely on 
the Helffer--Sj\"ostrand formula which 
holds for arbitrary operators $A=A^*$  
and arbitrary smooth functions $f\in\plainC{n}_0(\R), n\ge 2$ 
($z:=x+iy, 
\frac{\partial}{\partial\bar{z}} 
:= \frac12 (\frac{\partial}{\partial x} + i\frac{\partial}{\partial y})$):
\begin{equation}\label{HS:eq}
	f(A) = \frac{1}{\pi} \iint \frac{\p}{\p \bar z} \tilde f(x, y)\, 
	(A-x-iy)^{-1} dx dy,
\end{equation}
where $\tilde f =  \tilde f(x, y)$ 
is an almost analytic extension of the function $f$, see 
\cite[Chapter 2]{EBD}. An almost analytic extension of 
$f\in \plainC{n}(\R)$ is a $\plainC1(\R^2)$-function $\tilde f$, such that 
$f(x) = \tilde f(x, 0)$ 
and $\big|\frac{\p}{\p \bar z} \tilde f(x, y)\big|\le C|y|$.  
For the sake of brevity we use the representation \eqref{HS:eq} for compactly supported 
functions only, so that the integral \eqref{HS:eq} is norm-convergent. 

Let us describe a convenient almost analytic extension of a function 
$f\in\plainC{n}_0(\R)$. For an arbitrary $r >0$ introduce the function 
\begin{equation*}
U_r(x, y) := 
\begin{cases}
1,\ |y|< \lu x\ru_r,\\[0.2cm]
0,\  |y|\ge \lu x\ru_r,
\end{cases}
\lu x\ru_r := \sqrt{x^2 + r^2}. 
\end{equation*}
Later, we need a function $\z\in\plainC\infty_0(\R)$ be a function 
such that 
\begin{equation}\label{zeta:eq}
\z(t) = 1 \ \ \textup{for}\ \  |t|\le 1/2, \ \ 
\textup{and}\ \  \z(t) = 0 \ \ \textup{for}\ \ |t|\ge 1.
\end{equation}

\begin{lem}\label{ab:lem} 
	Let $f\in \plainC{n}(\R), n\ge 2$. Then for any $r >0$ the function $f$ 
	has an almost analytic extension 
	$\tilde f = \tilde f(\ \cdot\ , \ \cdot\ ; r)\in\plainC1(\R^2)$ such that 
	$\tilde f(x, y; r) = 0$ if $|y|>\lu x\ru_r$. Moreover, the derivative,
     $\frac{\p}{\p \overline{z}}\tilde f(x, y; r)$,
	satisfies the bound 
	\begin{equation}\label{ab:eq}
	\Big|\frac{\p}{\p \overline{z}}\tilde f(x, y; r)\Big|
	\le C_n F(x; r)
	 |y|^{n-1} U_r(x, y),
	\end{equation} 
 where 
\begin{equation*}
F(x; r) := \sum_{l=0}^{n} |f^{(l)}(x)|\lu x\ru_r^{-n+l}.
\end{equation*}	
The constant $C_n$ does not depend on $f$ or the constant $r$. 
\end{lem}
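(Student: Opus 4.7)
The plan is to produce an explicit Helffer--Sj\"ostrand type extension by multiplying a truncated Taylor series in $y$ by a smooth cutoff adapted to the region $|y|\le \lu x\ru_r$. Concretely, I would set
$$
\tilde f(x,y;r) := \Bigl(\sum_{k=0}^{n-1} \frac{(iy)^k}{k!}\, f^{(k)}(x)\Bigr)\, \z\!\Bigl(\frac{y}{\lu x\ru_r}\Bigr),
$$
where $\z$ is the cutoff from \eqref{zeta:eq}. The identity $\tilde f(x,0;r)=f(x)$ and the vanishing of $\tilde f$ for $|y|\ge \lu x\ru_r$ follow immediately from $\z(0)=1$ and $\supp\z\subset[-1,1]$. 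The hypothesis $f\in\plainC{n}(\R)$ with $n\ge 2$ guarantees that every factor in the sum is $\plainC1$ in $(x,y)$, so $\tilde f\in\plainC1(\R^2)$.

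Writing $g(x,y):=\sum_{k=0}^{n-1}(iy)^k f^{(k)}(x)/k!$ and $\eta(x,y):=\z(y/\lu x\ru_r)$, the product rule yields $\p_{\bar z}\tilde f = \eta\,\p_{\bar z} g + g\,\p_{\bar z}\eta$, which I would estimate separately. A direct computation of $\p_x g$ and $i\p_y g$ exhibits a telescoping cancellation inside the Taylor-like sum, collapsing to
$$
\p_{\bar z} g = \frac{(iy)^{n-1}}{2(n-1)!}\, f^{(n)}(x).
$$
Hence on $\supp\eta\subset\{|y|\le\lu x\ru_r\}$ the first contribution is pointwise bounded by $C_n |y|^{n-1}|f^{(n)}(x)|\,U_r(x,y)$, which is exactly the $l=n$ summand on the right-hand side of \eqref{ab:eq} (since $\lu x\ru_r^{-n+n}=1$).

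For the second contribution I note that $\p_{\bar z}\eta$ is supported on the annulus $\lu x\ru_r/2\le|y|\le\lu x\ru_r$ (because $\z'$ is supported in $[-1,-1/2]\cup[1/2,1]$), and by the chain rule $|\p_{\bar z}\eta|\le C/\lu x\ru_r$ there; the only slightly awkward piece, $\z'(y/\lu x\ru_r)\cdot(-xy)/\lu x\ru_r^3$ coming from $\p_x\eta$, is controlled by $C/\lu x\ru_r$ using $|x|\le\lu x\ru_r$ and $|y|\le\lu x\ru_r$ on the support. The $k$-th Taylor term therefore contributes at most $C|y|^k|f^{(k)}(x)|/\lu x\ru_r$ on the annulus, and since $|y|\asymp\lu x\ru_r$ there we can rewrite this as $C|y|^{n-1}\lu x\ru_r^{-n+k}|f^{(k)}(x)|\,U_r(x,y)$, reproducing the $l=k$ summand of \eqref{ab:eq} for $k=0,1,\dots,n-1$. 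Summing both contributions delivers the required bound with a constant depending only on $n$ and the fixed cutoff $\z$.

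There is no real obstacle here: the only nontrivial step is the bookkeeping of the chain rule for $\z(y/\lu x\ru_r)$, and the choice of the adapted scale $\lu x\ru_r$ (rather than the usual $\lu x\ru$) is precisely what makes the constant $C_n$ in \eqref{ab:eq} independent of $r$. The whole argument is essentially the textbook Helffer--Sj\"ostrand construction with this minor adjustment.
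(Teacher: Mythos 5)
Your construction is exactly the standard Helffer--Sj\"ostrand extension $\tilde f(x,y;r)=\bigl(\sum_{k=0}^{n-1}(iy)^k f^{(k)}(x)/k!\bigr)\z(y/\lu x\ru_r)$ that the paper has in mind: the proof is omitted there with a pointer to \cite[Chapter 2]{EBD}, the ``marginal modification'' being precisely your replacement of the usual scale $\lu x\ru$ by $\lu x\ru_r$. The telescoping identity for $\p_{\bar z}g$, the support and size estimates for $\p_{\bar z}\z(y/\lu x\ru_r)$, and the conversion $|y|\asymp\lu x\ru_r$ on the annulus all check out, so the argument is correct and coincides with the intended one.
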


The proof of this lemma is a marginal modification of the proof contained in \cite[Chapter 2]{EBD} 
and is thus omitted.  
 
\end{appendix}

\bibliographystyle{amsplain}

\end{document}